\title{On the timescale at which statistical stability breaks down}
\author{Neil Dobbs}
\address{University College Dublin, Ireland.}
\thanks{N.D.\ was supported by the ERC Bridges grant while at the University of Geneva. This work sprang from the authors' encounter during the 2016 ESI programme ``Mixing Flows and Averaging Methods" in Vienna.}
\email{neil.dobbs@ucd.ie}
\author{Alexey Korepanov}
\address{Exeter University, UK.}
\email{a.korepanov@exeter.ac.uk}
\thanks{A.K.\ was funded by a European Advanced Grant {\em StochExtHomog} (ERC AdG 320977) at the University of Warwick
and an Engineering and Physical Sciences Research Council grant EP/P034489/1 at the University of Exeter. 
A.K.\ is grateful to Mark Holland and Ian Melbourne for support and to University College Dublin for
hospitality during his visit.}
\thanks{The authors are very grateful to the referees for their thorough reading and helpful comments.}
\newcommand\ol{\overline}
\newcommand\pot{{\varphi}}   
\newcommand\sbar{{\overline{S}}}
\newcommand\eps{\varepsilon}
\DeclareMathOperator{\dist}{dist}
\DeclareMathOperator{\diam}{diam}
\DeclareMathOperator{\ind}{index}
\DeclareMathOperator{\hei}{height}
\DeclareMathOperator{\E}{\mathbb{E}}
\newcommand{\Lip}{{\mathrm{Lip}}}
\newcommand\arr{\mathbb{R}}
\newcommand\R{\mathbb{R}}
\newcommand\Z{\mathbb{Z}}
\newcommand\N{\mathbb{N}}
\newtheorem{thm}{Theorem}[section]
\newtheorem{dfn}[thm]{Definition}
\newtheorem{lem}[thm]{Lemma}
\newtheorem{prop}[thm]{Proposition}
\newtheorem{cor}[thm]{Corollary}
\newtheorem{rem}[thm]{Remark}
\newtheorem{example}[thm]{Example}
\numberwithin{equation}{section}
\tikzset{negated/.style={
        decoration={markings,
            mark= at position 0.5 with {
                \node[transform shape] (tempnode) {$\backslash$};
            }
        },
        postaction={decorate}
    }
}
\newcommand\hf{{\hat{f}}}
\newcommand\hF{{\hat{F}}}
\newcommand\hmu{{\hat{\mu}}}
\newcommand\hrho{{\hat{\rho}}}
\newcommand\cA{\mathcal{A}}
\newcommand\cC{{\mathcal C}}
\newcommand\cP{\mathcal{P}}
\def\mruf{{Misiurewicz-rooted unimodal family}}
\def\mc{\simeq} 
\def\ggg{\gtrsim} 
\def\lll{\lesssim}
\newcommand\rthm[1]{{Theorem~\ref{thm#1}}}
\newcommand\rprop[1]{{Proposition~\ref{prop#1}}}
\newcommand\rlem[1]{{Lemma~\ref{lem#1}}}
\newcommand\rdef[1]{{Definition~\ref{def#1}}}
\newcommand\rsec[1]{{\S\ref{sect#1}}}
\newcommand\rrem[1]{{Remark~\ref{rem#1}}}
\begin{document}
\begin{abstract}
        In dynamical systems, understanding statistical properties shared by most orbits and how these properties depend on the system are basic and important questions. Statistical properties may persist as one perturbs the system  (\emph{statistical stability} is said to hold), or may vary wildly. 
    The latter case is our subject of interest, and we ask at what timescale does statistical stability break down. 
 This is the time needed to observe, with a certain probability, a substantial difference in the statistical properties as described by (large but finite time) Birkhoff averages.

    The quadratic (or logistic) family is a natural and fundamental example where statistical stability does not hold. We study this family. When the base parameter is of Misiurewicz type, we show, sharply,  that if the parameter changes by $t$, it is necessary and sufficient to observe the system for a time at least of the order of $|t|^{-1}$ to see the lack of statistical stability.
\end{abstract}
    
\maketitle

\section{Introduction}
In this paper, we investigate the timescale at which statistical stability of dynamical systems breaks down. 
We carry out this study in the quadratic family, a standard test-bed for new directions in dynamics. 
The main theorems are stated in \rsec{Statements}.

A real-world system can be represented by a phase space $X$, the set of all possible configurations of the system. Its evolution, with discrete time-steps, is described by a map $f \colon X \to X$. 
Suppose $X$ is a Riemannian manifold and $f$ is continuous.  If $x,y \in X$ are nearby points, their orbits $x, f(x), f^2(x),\ldots$ and $y, f(y), \ldots$ remain close for a time. If the map is expanding, these orbits diverge in a time of  the order of $\log \dist(x,y)^{-1}$ and may have very different properties. It is then natural to look at statistical properties of orbits, for example by studying Birkhoff averages
$$
\sbar_n\pot (x) = \frac1n \sum_{j=0}^{n-1} \pot \circ f^j(x),$$
where $\pot \colon X \to \arr$ is a continuous function (called an \emph{observable}).

Perhaps surprisingly, in well-behaved systems, for a given $\pot$, the Birkhoff averages may converge as $n\to \infty$ to the same limit for almost every $x$ with respect to the volume measure on $X$. Better still, there is a unique $f$-invariant probability measure $\mu$ with the property that the limit is $\int \pot \, d\mu$ for \emph{every} continuous $\pot$.

\subsection{Structural stability.}
Suppose we have a smooth one-parameter family of (discrete-time) maps $f_t : X \to X$ for $t$ in a neighbourhood of $0$. The dynamics of nearby maps is relevant to the resilience to perturbation or if there is some uncertainty as to the governing parameters. 
If $$\dist(f_t(x), f_0(x)) \approx t,$$ as is reasonable, the orbits of $x$ under $f_0$ and $f_t$ are expected to diverge in approximately $\log |t|^{-1}$ time steps. Thus, comparing orbits of the same point under nearby maps does not lead very far. To deal with this, Andronov and Pontryagin \cite{AndPon} introduced the notion of \emph{structural stability,} when for each nearby map there exists a global homeomorphism which maps  orbits of the nearby map to orbits of the original. This concept works well for flows on compact surfaces \cite{Peix, SmaleWiGA} and more general Morse-Smale systems, for example.

Structural stability is a rather rigid property. A fundamental example where it fails is the family of quadratic (or \emph{logistic}) maps 
$$f_t : x \mapsto x^2 +(a+t),$$ 
where $a+t$ lies in the parameter interval  $[-2,1/4].$ 
From Jakobson's Theorem 
\cite{Jak}, 
one deduces that the topological entropy of $f_t$ is not locally constant at $t=0$ for any $a$ in a positive-measure set of parameters. In particular, structural stability does not hold.

\subsection{Statistical stability.}
Even without structural stability, statistical properties may appear to persist.
Suppose that $X$ is compact and let $m$ denote the volume measure on $X$,
normalized so that $m(X) = 1$.
An $f$-invariant probability measure $\mu$ on $X$ is called \emph{physical},
or \emph{Sinai-Ruelle-Bowen (SRB)}, if there exists $A \subset X$
with $m(A) > 0$ so that for all continuous $\pot \colon X \to \R$
and $x \in A$,
\[
  \lim_{n \to \infty} \bar{S}_{n} \pot (x)
  = \int \pot \, d\mu
  .
\]
If $m(A) = 1$, we say that $\mu$ is a \emph{global physical measure.}

We say that the family $f_t$ is \emph{statistically stable,} if
for every $f_t$ there exists a global physical measure $\mu_t$, and
for each continuous $\pot \colon X \to \arr$,
\[
  \lim_{t \to 0} \int \pot \,d\mu_t
  = \int \pot \, d\mu_0.
\]

Statistical stability has been studied by Keller~\cite{KelSSiscds}, Dolgopyat~\cite{DolOdomcd}, Alves and Viana~\cite{AlvesVianaSS}, Alves, Carvalho and Freitas~\cite{AlCaFr}, Freitas and Todd~\cite{FToddSS}
and others. The study of higher regularity properties was driven by Ruelle and Baladi, see \cite{RueDiff, RueStruct, BalBenSch} and references therein.

In the quadratic family, statistical stability holds at hyperbolic parameters (those corresponding to maps with periodic attractors). However, it does not hold everywhere, failing at most non-hyperbolic parameters \cite{Thun, DobTod}, even near the so-called Misiurewicz parameters \cite{DobTod}.
Moreover, there are quadratic maps \cite{HofKel1990a} for which there is no physical measure
to begin with.

  \begin{rem}
  One can obtain highly non-trivial positive results concerning statistical stability \cite{Tsu95, FToddSS}, and even H\"older continuity of the map $t \mapsto \int \pot d\mu_t$ \cite{BalBenSch}, if the parameter range is restricted to a nowhere dense, but positive measure, set.
  \end{rem}

  \subsection{The breakdown of statistical stability.}
Introducing $t$-dependence to our Birkhoff averages, we set
\[
  \bar{S}_{t,n} \pot 
  = \frac{1}{n} \sum_{j=0}^{n-1} \pot \circ f_t^j.
\]
For each $t,n$, we view $\bar{S}_{t,n} \pot$ as random a variable on the probability space $(X,m)$.
We suppose that $f_0$ admits a global physical measure $\mu_0$, and we use $\mu_t$ to refer to the global physical measures for $f_t$, whenever they exist.

  Consider the following diagram.
\[
\begin{tikzcd}[column sep=4cm, row sep=huge]
\sbar_{t,n}\pot (x) \arrow{r}{n\to\infty \quad m \text{ a.s.}} \arrow[swap]{d}{t\to 0} \arrow[swap, bend right=10, sloped]{dr}{  (t,n) \to (0,\infty)? } & \int \pot \, d\mu_t \arrow[negated]{d}{\, \quad t \to 0 }  \text{ if $\mu_t$ exists} \\
    \sbar_{0,n} \pot(x) \arrow{r}{n\to\infty \quad m \text{ a.s.}} & \int \pot \, d\mu_0  
  \end{tikzcd}
\]
  Following the lower-left path, 
$$
\lim_{n\to\infty} \lim_{t \to 0} \sbar_{t,n}\pot(x) 
  = \int \pot \, d\mu_0 \quad m\text{-almost surely}. $$
  Switch the order of limits and this will no longer hold. 
  The measures $\mu_t$ need not exist, and even restricting to parameters for which they do, the integrals $\int \pot \, d\mu_t$ need not vary continuously. 

  Now consider the diagonal arrow. Let $n(t)$ be an integer-valued function of $t$ with $n(t) \to \infty$ as $t \to 0$.
  Intuitively, if  $n(t) \ll -\log|t|$, then orbits of a point $x$ under $f_{0}$ and under $f_t$ do not have time to meaningfully diverge, so $\sbar_{t,n(t)}\pot \approx \sbar_{0,n(t)}\pot$ and
\begin{equation}\label{eqnAA0bis}
\lim_{t\to 0} \bar{S}_{t,n(t)} \pot
  =
  \int \pot \, d\mu_0
  \quad \text{$m$ almost-surely.} 
\end{equation}
  As a corollary,
\begin{equation}\label{eqnAA0}
      \bar{S}_{t,n(t)} \pot
  \to
  \int \pot \, d\mu_0
  \quad \text{in probability (w.r.t.\ $m$), as } \ t \to 0.
\end{equation}
The almost sure convergence \eqref{eqnAA0bis} is a rather rigid concept,
it is expected to break down once $n(t) \gg -\log|t|$, see \cite[Section~7]{KKM15}.

In this paper, we examine how fast $n(t)$ can grow without destroying the convergence in probability \eqref{eqnAA0}. Or, given the size of a small perturbation, we determine the minimum amount of observation time needed to discover instability in the statistical behaviour. 
Similarly, if we have some uncertainty in the parameter governing the system, the predicted statistical behaviour is valid up until some timescale. 

For the quadratic family, if the  base parameter is of Misiurewicz type, the statistical stability continues to hold as long as $n(t)$ grows more slowly than $t^{-1}$, see \rthm{Con}. This result is sharp: if $n(t)$ grows as fast as $t^{-1}$, continuity is lost, see \rthm{Discon}. 
We say that, in this context, \begin{quote}\emph{statistical stability breaks down at the timescale $\frac1t$.}\end{quote} 

    \subsection{Fast-slow systems.}
An initial stimulus for our work was the study of \emph{fast-slow} systems of the form:
\begin{equation}
  \begin{cases}
      s_{\eps,n+1} = s_{\eps,n} + \eps \pot(s_{\eps, n}, x_{\eps,n}), & s_{\eps,0} = 0\\
     x_{\eps,n+1} = f_\eps (x_{\eps,n}), & x_{\eps,0} \sim  m
  \end{cases}
\end{equation}
with $\eps \in [0,\eps_0]$.
When the maps $f_\eps$ are nonuniformly expanding,
under rather general assumptions it is proved~\cite{KKM15}
that as $\eps \to 0$, the random process
$s_{\eps, \lfloor \eps ^{-1} t \rfloor}$, $t \in [0,1]$,
converges in distribution to the solution of the ordinary
differential equation $\dot{s} = \int \pot(s, x) \, d\mu_0(x)$, $s(0) = 0$,
where $\mu_0$ is the physical measure for $f_0$.

In the case of logistic maps,
to satisfy the assumption that the maps $f_\eps$ are nonuniformly expanding,
the range of $\eps$ has to be restricted to a nowhere dense subset of $[0,\eps_0]$.
It is an interesting question whether the
restriction on parameters can be removed.
The authors of~\cite{KKM15} were asked this question
by various people, including D.~Dolgopyat and
the anonymous referee of~\cite{KKM15}. 

To simplify the model, we suppose that $\varphi$ does not depend on $s$,
i.e.\ $\varphi(s,x) = \varphi(x)$.
Then
\[
  s_{\eps, \lfloor \eps ^{-1} t \rfloor}
  = \eps \sum_{j=0}^{\lfloor \eps ^{-1} t \rfloor - 1}
  \pot \circ f_\eps^j.
\]
Our theorems 
respond to the above question, showing that convergence
breaks down without a restriction on the parameter range
but, surprisingly, for all shorter (and less natural) timescales,
one does have convergence.

\subsection{Stochastic stability.}

In this paper we perturb a dynamical system by considering
another one close to the original. Such perturbations are called
\emph{deterministic.} Another type is \emph{stochastic,}
where at each step a small perturbation is chosen randomly.

Suppose the base map has a physical measure $\mu_0$.
If the statistics of stochastically perturbed systems can be
described by measures $\mu_\eps$, where $\eps$ reflects the average strength of the
perturbation, and if $\mu_\eps \to \mu_0$ as $\eps \to 0$,
then the base map is \emph{stochastically stable.}
The question of stochastic stability has been treated successfully in
\cite{AlvVil13,BalBenDes02,BalVia96,BenYou92,KelSSiscds,LiWang13,Shen13,ShenStrien13}
among others.

In sharp contrast with statistical stability, almost every quadratic map 
is stochastically stable \cite{BalVia96,BenYou92, Shen13, Lyu2002Ann}.


\subsection{Statistical detection of the lack of linear response.}
While preparing this manuscript for submission, we became aware of an interesting article \cite{GWW16} by Gottwald \emph{et al} which examines, via numerical experiments, the possibility of detecting statistically the lack of linear response in the quadratic family with a `global observable'. 

Linear response is a stronger property than statistical stability. Even so, it is found in \cite{GWW16} that
detection of absence of linear response requires 
a well-designed statistical test and observations on long timescales (such as $10^6$ iterations). 

The results of \cite{GWW16} suggest that one needs a timescale of order at least $t^{-0.91}$  to detect the lack of linear response under perturbations of size $t$ with a global observable. The timescale can be reduced by crafting special observables. These observations are compatible with our results concerning statistical stability. There is a more in-depth discussion in \cite{GWW16} about the implications for mathematical modelling.

\subsection{Organisation.}
The paper is organized as follows. In \rsec{Statements} we give formal definitions and statements of our main results. In \rsec{Prel} we assemble various results about the maps $f_t$ close to the base map $f_0$. In~\rsec{SetUp} we study topological and metric properties of first return maps to carefully chosen small neighbourhoods of the critical point. 

In~\rsec{Discon} we prove the lack of statistical stability on the timescale $n(t) = t^{-1}$. We find parameters $t_n$ with the critical point a super-attracting periodic point with period as short as possible. The size of the immediate basin of attraction of the critical point happens to be of the order of $t_n$. For any $C>0$, we show that a definite proportion of points fall into the basin in fewer than $Ct_n^{-1}$ iterates, which is enough to obliterate statistical stability. 

In~\rsec{Con} we prove statistical stability on shorter timescales. There is a natural argument which works for timescales up to $o(t^{-1/2})$ (see \rrem{SquareRootT}), but this is not optimal. To reach the optimal $o(t^{-1})$, we intricately construct an induced map.  We use it to approximate each $f_t$ with a non-uniformly expanding map for which martingale approximations give strong control of statistical properties.

\section{Statements} \label{sectStatements}
We shall often write $Df$ for the derivative $f'$ of a map $f$. 
 \begin{dfn}
\label{defUni}
We say that a continuous map $f:I \to I$, defined on a compact interval $I$, is \emph{unimodal} if $f$ has exactly one turning point $c$. We say $f$ is a \emph{smooth unimodal map} if, moreover, $f$ is continuously differentiable and $c$ is the unique (\emph{critical}) point satisfying $f'(c)=0$. The critical point and the map are \emph{non-degenerate} if $f''(c) \ne 0$. 
\end{dfn}
\begin{dfn}
    A map $f : I \to I$  is \emph{S-unimodal} if it is a $\cC^2$ smooth unimodal map with  critical point $c$,  $|f'|^{-1/2}$ is convex on each component of $I\setminus \{c\}$, $f(\partial I) \subset \partial I$ and $|f'| >1$ on $\partial I$. 
\end{dfn}

The convexity condition is equivalent (\cite{NowSan}, \cite[p.~266]{deMvS}), for $\cC^3$ maps, to having non-positive Schwarzian derivative, while strict convexity corresponds to negative Schwarzian derivative. Quadratic maps have negative Schwarzian derivative. A forward-invariant compact set $X$ for $f$ is \emph{hyperbolic repelling} if there exists $k\geq 1$ with $|Df^k|\geq2$ on $X$.
The \emph{post-critical orbit} is the set $\{f^n(f(c))\}_{n\geq0}$. 
 
\begin{dfn}
A smooth unimodal map is called \emph{Misiurewicz} if the closure of its post-critical orbit is a hyperbolic repelling set. 
\end{dfn}
Misiurewicz maps have strong expansion properties which outweigh any contraction caused by passage close to the critical point. By Singer's Theorem \cite[Theorem~III.1.6]{deMvS}, all periodic points of an $S$-unimodal Misiurewicz map are hyperbolic repelling. We shall recall further properties anon. 

Throughout the paper we fix $I = [-1,1]$, and all our unimodal maps have the critical point at $0$.

\begin{dfn}
    A \emph{\mruf} is a family $\{f_t\}_{t \in [0,\eps]}$, $\eps > 0$,
    of non-degenerate S-unimodal maps on $I$ with the critical point $0$.
    We require that $f_0$ is a Misiurewicz map and $f_t(x)$ is $\cC^2$ as a function
    of $(x,t)$.
\end{dfn}

\begin{dfn}
    We say that a \mruf{} $\{f_t\}$ is \emph{transversal} if
    $$
      \sum_{j=0}^\infty
      \frac{
        \partial_t f_t\bigl(f_0^j(0)\bigr) \bigr|_{t=0}
      }{
        (f^j_0)'\bigl(f_0(0)\bigr)
      } \ne 0.
    $$
\end{dfn}

Suppose that $\{f_t\}$ is a \mruf{} and let $\mu_0$ be the
unique $f_0$-invariant absolutely continuous probability measure \cite{Mis:IHES}.
Let $\pot \colon I \to \arr$ be a continuous observable and define
\[
    \sbar_{t,n}\pot := \frac{1}{n} \sum_{j=0}^{n-1} \pot \circ f_t^j .
\]
Let $\bar{\pot} = \int \pot \, d\mu_0$. Let $m$ denote the normalized to probability
Lebesgue measure on $I$.

\begin{thm}[Persistence of statistical stability] \label{thmCon}
    For any function $n \colon \arr^+ \to \Z^+$ such that
    $\lim_{t \to 0^+} n(t) = \infty$ and 
    $\lim_{t\to 0^+} t n(t) = 0,$
    $$
      \lim_{t \to 0^+} \int_{I} | \sbar_{t, n(t)}\pot - \bar{\pot} | \, dm = 0.$$
\end{thm}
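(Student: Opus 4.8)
The plan is to control the statistics of each $f_t$ directly on the timescale $n(t)$. Comparing orbits of $f_t$ and $f_0$ is useless here: since $n(t)$ is much larger than $\log|t|^{-1}$, such orbits have long decorrelated. The argument has three parts. \emph{(1)} For each small $t>0$ we build a map $\hf_t$ that agrees with $f_t$ outside a small set $B_t$ adjacent to the critical point and that is genuinely nonuniformly expanding, with a first-return induced map to a fixed neighbourhood of $0$ having exponential return-time tails with constants independent of $t$. Then $\hf_t$ has a unique a.c.i.p.\ $\hmu_t$, whose density is bounded above and bounded below by a positive constant, uniformly in $t$, and $\hmu_t\to\mu_0$ weakly as $t\to0$ (this uniform statistical stability for $\{\hf_t\}$ follows from continuity of the tower construction, using $\hf_t\to f_0$). \emph{(2)} For the well-behaved maps $\hf_t$ we control Birkhoff sums via a martingale approximation, uniformly in $t$. \emph{(3)} We bound the $m$-measure of points whose $f_t$-orbit meets $B_t$ within $n(t)$ steps by $O(t\,n(t))=o(1)$; this is the only step in which the timescale $t^{-1}$ plays a role.

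\emph{Construction of $\hf_t$.} Because $f_0$ is Misiurewicz, $K_0:=\overline{\{f_0^j(0):j\ge1\}}$ is hyperbolic repelling and bounded away from $0$; it has a hyperbolic continuation $K_t$ for small $t$, still bounded away from $0$, with $f_t(0)$ at distance $O(t)$ from $K_t$. The only obstruction to $f_t$ itself being nonuniformly expanding with good tails is that, once the critical orbit of $f_t$ has drifted an $O(1)$ distance from $K_t$ --- which takes $\gtrsim\log|t|^{-1}$ steps --- it may return near $0$ and, in the worst case, form an attracting cycle with a basin of positive $m$-measure. One removes this by re-routing $f_t$ on a set $B_t$ next to $0$ so that the modified critical value falls on $K_t$. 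A naive single surgery on a ball about $0$ must have radius of order $\sqrt t$, the scale on which a quadratic critical value is displaced by $O(t)$, and gives $m(B_t)=O(\sqrt t)$; this is the source of the $o(t^{-1/2})$ threshold mentioned in \rrem{SquareRootT}. The improvement to $o(t^{-1})$ needs a more delicate construction of the induced map through a hierarchy of intermediate scales between $t$ and $\sqrt t$, arranged so that the set $B_t$ one really has to modify --- equivalently, the part of the induced domain carrying infinite or pathologically long return times --- has measure $O(t)$, while the induced map keeps uniform exponential tails. This is the step advertised in the introduction as the ``intricately constructed induced map''.

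\emph{Statistics of $\hf_t$.} Uniform exponential tails of the induced map yield a martingale--coboundary decomposition $\pot-\bar{\pot}_t=\psi_t+\chi_t\circ\hf_t-\chi_t$, where $\bar{\pot}_t:=\int\pot\,d\hmu_t$, $\chi_t\in L^\infty(\hmu_t)$, and $\psi_t\in L^2(\hmu_t)$ is a reverse martingale difference, with $\|\chi_t\|_\infty$ and $\|\psi_t\|_2$ bounded uniformly in $t$ (Gordin's method on the tower, or a uniform spectral gap for the transfer operator of $\hf_t$). Telescoping gives
\[
  \frac1n\sum_{j=0}^{n-1}\pot\circ\hf_t^{\,j}-\bar{\pot}_t
  =\frac1n\sum_{j=0}^{n-1}\psi_t\circ\hf_t^{\,j}
  +\frac1n\bigl(\chi_t\circ\hf_t^{\,n}-\chi_t\bigr),
\]
so by orthogonality of the martingale increments $\bigl\|\frac1n\sum_{j=0}^{n-1}\pot\circ\hf_t^{\,j}-\bar{\pot}_t\bigr\|_{L^2(\hmu_t)}\le Cn^{-1/2}$. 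Using the uniform two-sided comparison between $\hmu_t$ and $m$ (and that $m$-a.e.\ point enters the support of $\hmu_t$ in a uniformly bounded expected time), this transfers to
\[
  \int_I\Bigl|\frac1n\sum_{j=0}^{n-1}\pot\circ\hf_t^{\,j}-\bar{\pot}\Bigr|\,dm
  \le Cn^{-1/2}+|\bar{\pot}_t-\bar{\pot}|,
\]
which tends to $0$ as $t\to0$ provided $n=n(t)\to\infty$.

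\emph{Comparison, and the main obstacle.} Since $\hf_t=f_t$ off $B_t$, the $f_t$- and $\hf_t$-orbits of a point agree until their first visit to $B_t$, so $\sbar_{t,n}\pot(x)=\frac1n\sum_{j=0}^{n-1}\pot\circ\hf_t^{\,j}(x)$ unless $\hf_t^{\,j}(x)\in B_t$ for some $j<n$. By invariance of $\hmu_t$ and its uniform comparability with $m$,
\[
  m\bigl\{x:\exists\,j<n(t),\ \hf_t^{\,j}(x)\in B_t\bigr\}
  \le\sum_{j=0}^{n(t)-1}m\bigl(\hf_t^{-j}(B_t)\bigr)
  \le C\,n(t)\,\hmu_t(B_t)
  \le C'\,n(t)\,t,
\]
which tends to $0$ since $t\,n(t)\to0$. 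On the complement of this set we bound $|\sbar_{t,n(t)}\pot-\bar{\pot}|$ by $\bigl|\frac1{n(t)}\sum_{j=0}^{n(t)-1}\pot\circ\hf_t^{\,j}-\bar{\pot}\bigr|$ and integrate via part (2); on the exceptional set we use the trivial bound $2\|\pot\|_\infty$ together with the measure estimate above. Adding the two contributions yields $\int_I|\sbar_{t,n(t)}\pot-\bar{\pot}|\,dm\to0$. The principal obstacle is part (1): forcing the re-routed set $B_t$ down to measure $O(t)$ while keeping uniform exponential tails for the induced map. This multiscale construction is where the sharp timescale $t^{-1}$ is actually attained; everything else would already work, with weaker constants, for the suboptimal threshold $o(t^{-1/2})$.
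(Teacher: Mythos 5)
Your outline coincides with the paper's strategy (approximate $f_t$ by a nonuniformly expanding $\hf_t$ differing from $f_t$ on a set of measure $O(t)$; uniform $n^{-1/2}$ martingale control of Birkhoff averages of $\hf_t$; orbit agreement up to time $o(t^{-1})$; convergence of $\int\pot\,d\hmu_t$), but the argument has a genuine gap exactly where you flag ``the principal obstacle'': the multiscale construction that brings the modified set down to measure $O(t)$ while retaining uniform tails is asserted, not carried out. This is not a routine technicality --- it is the content of \rprop{ContInd}, which occupies most of \rsec{IndScheme}: one builds a nested sequence of coloured partitions of $U_1$ by iterating the first return map $\phi_1$, colours red the pullbacks of the small central interval $V=(-Ct,Ct)$ and blue the branches that have become $U_0$-extensible, and the crux is a combinatorial counting argument (\rlem{Nahh}) showing that the number of ``yellow'' (not yet resolved) intervals of index $\ell$ grows at most like $6^\ell$ while their lengths decay like $1000^{-\ell}$ (\rlem{Anfg}); this is what yields $m(I_r)\lll t$ together with $\int\rho^2\,dm\mc 1$. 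Without some version of this counting, the claim that ``the part of the induced domain carrying pathologically long return times has measure $O(t)$'' is unsupported, and it is precisely the estimate on which the sharp timescale rests.

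A second, smaller gap: you dispose of $\hmu_t\to\mu_0$ by appealing to ``continuity of the tower construction.'' The partition $\cP$ and the tower depend on $t$ in a highly discontinuous way, so this is not automatic. The paper avoids the issue entirely (\rlem{StatStab}): fix a large $n$, use continuity of $(x,s)\mapsto f_s^n(x)$ to compare $\frac1n\sum_{j<n}\pot\circ f_t^j$ with $\frac1n\sum_{j<n}\pot\circ f_0^j$ in the sup norm, pass to $\hf_t$ via \rlem{TrTr}, and then apply the \emph{uniform} bound of \rlem{AprHat} to both $\hf_t$ and $\hf_0=f_0$ at that fixed $n$; letting $n\to\infty$ afterwards gives $\int\pot\,d\hmu_t\to\int\pot\,d\mu_0$ with no continuity of the inducing data required. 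I recommend adopting that argument rather than trying to make the towers converge. The remaining steps of your proposal (the union bound $m\{\exists j<n(t):\hf_t^j(x)\in B_t\}\lll n(t)\,t$ using $dm/d\hmu_t\lll 1$, and the martingale $L^2$ estimate transferred to $m$) are sound, modulo the standard care the paper takes in \rlem{AprHat} with points outside the inducing base, handled there by a coupling with the first entry map.
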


\begin{thm}[Breakdown of statistical stability] \label{thmDiscon}
    Let $a>0$.
    If the family $\{f_t\}$ is transversal, then there exists a continuous observable $\pot$ for which 
    $$
    \limsup_{t \to 0^+} \int_{I} \sbar_{t, \lfloor \frac{a}{t} \rfloor} \pot \, dm
      \ne \bar{\pot}.
    $$
\end{thm}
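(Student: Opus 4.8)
The plan is to find, for the prescribed $a>0$, a sequence $t_n\to0^+$ at which $f_{t_n}$ has a super-attracting periodic orbit through the critical point $0$ whose immediate basin is large enough to be detected by Lebesgue measure within $\lfloor a/t_n\rfloor$ iterates, and then to exploit the fact that such a periodic orbit spends almost all of its life shadowing the $f_0$-orbit of $0$, which is trapped in the (measure-zero) post-critical set. Put $K=\overline{\{f_0^j(0):j\ge0\}}$; by the Misiurewicz hypothesis $K$ is hyperbolic repelling, so $0\notin K$, $K$ contains no interval, and $\mu_0(K)=0$ (as $\mu_0\ll m$ and $\mu_0$ is ergodic with support the core). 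Write $v_t=f_t(0)$ and $\xi_j(t)=f_t^j(0)$. The argument splits into: (i) the construction of the $t_n$ and an estimate on the size $|U_n|$ of the immediate basin $U_n\ni0$; (ii) a quantitative escape estimate; (iii) shadowing together with the choice of $\pot$.

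For (i) I would differentiate $\xi_p(t)=f_t^p(0)$ at $t=0$, obtaining the standard identity
\[
  \xi_p'(0)=(f_0^{p-1})'(v_0)\,\Sigma_p,\qquad
  \Sigma_p=\sum_{j=0}^{p-1}\frac{\partial_t f_t(f_0^j(0))\big|_{t=0}}{(f_0^j)'(v_0)},
\]
where $\Sigma_p\to\Sigma\ne0$ by transversality and $|(f_0^{p-1})'(v_0)|$ grows exponentially in $p$ because the orbit $v_0,f_0(v_0),\dots$ stays in $K$. Combining this with distortion control in the parameter (in the spirit of \rsec{SetUp}) should give, for all large $p$ and the correct choice of the parameter range, parameters at which $0$ is periodic; choosing the period $p_n$ to be as short as possible yields $t_n\to0^+$ with $0$ super-attracting of period $p_n\to\infty$, and, since $\xi_{p_n}(0)\in K$ is bounded away from $0$, $t_n\asymp|(f_0^{p_n-1})'(v_0)|^{-1}$. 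A routine computation using non-degeneracy of the critical point then shows $U_n$ is an interval with $|U_n|\asymp t_n$, and that every immediate-basin component $f_{t_n}^j(U_n)$, $0\le j<p_n$, has length $\lesssim t_n$. Transversality is indispensable here: it is exactly what keeps $\Sigma_p$ away from $0$, hence $|U_n|$ as large as $t_n$.

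Step (ii) is the crux. Fix a small interval $\Delta\ni0$, independent of $n$. For $f_0$, hitting and return times to $\Delta$ have exponential tails and the first-return map to $\Delta$ is uniformly expanding with bounded distortion; the constructions of \rsec{SetUp} should make this uniform over $f_t$ with $t$ small. For $f_{t_n}$ with $n$ large one has $U_n\subset\Delta$, and at each return to $\Delta$ there is a probability $\asymp m(U_n)/m(\Delta)\asymp t_n$ of landing in $U_n$ and being permanently trapped. Since the return time has uniformly bounded expectation, a point of $\Delta$ makes of order $1/t_n$ returns during $\lfloor a/(2t_n)\rfloor$ iterates, so the probability of avoiding $U_n$ throughout is at most $(1-ct_n)^{c'/t_n}\le e^{-c''a}<1$; together with the exponentially small probability of not reaching $\Delta$ in the first half of the time, this gives $m(E_n)\ge\rho$ for all large $n$, where
\[
  E_n=\bigl\{x\in I:\ f_{t_n}^j(x)\in U_n\ \text{for some}\ 0\le j\le\lfloor a/(2t_n)\rfloor\bigr\}
\]
and $\rho=\rho(a,\Delta)>0$. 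I expect this to be the main obstacle: although $f_{t_n}$ is $\cC^2$-close to $f_0$, it is genuinely different from it on the timescale $1/t_n$, and making the return-map estimates uniform in $n$ is precisely what \rsec{SetUp} is built for.

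For (iii), propagating the error $|z_j-f_0^j(0)|$ (with $z_j=f_{t_n}^j(0)$) along $K$ — using $f_{t_n}=f_0+O(t_n)$, uniform expansion of $f_0$ along $K$, and $t_n\asymp|(f_0^{p_n-1})'(v_0)|^{-1}$ — shows $z_j$ is within $o(1)$ of $f_0^j(0)\in K$ for every $j\le(1-\delta)p_n$ (any fixed $\delta>0$), hence that each component $f_{t_n}^j(U_n)$ with $j\le(1-\delta)p_n$ lies in a shrinking neighbourhood of $K$. Now choose $\pot$ continuous with $0\le\pot\le1$, $\pot\equiv1$ on $K$, and $\supp\pot\subset W$, where $W\supset K$ is open with $\mu_0(W)<\rho/4$ — possible since $\mu_0(K)=0$, with $\rho=\rho(a)$ fixed before $W$. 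For $x\in E_n$, all but an $o(1)$ fraction of $j\in[\lfloor a/(2t_n)\rfloor,\lfloor a/t_n\rfloor)$ have $f_{t_n}^j(x)$ in some component $f_{t_n}^{j'}(U_n)$ with $j'<(1-\delta)p_n$, on which $\pot\ge1-o(1)$; since $\pot\ge0$, this gives $\sbar_{t_n,\lfloor a/t_n\rfloor}\pot(x)\ge\frac12(1-\delta)-o(1)$. Hence, for every $\delta>0$,
\[
  \int_I\sbar_{t_n,\lfloor a/t_n\rfloor}\pot\,dm\ \ge\ \int_{E_n}\sbar_{t_n,\lfloor a/t_n\rfloor}\pot\,dm\ \ge\ \rho\Bigl(\tfrac12(1-\delta)-o(1)\Bigr),
\]
so $\limsup_{t\to0^+}\int_I\sbar_{t,\lfloor a/t\rfloor}\pot\,dm\ge\rho/2>\rho/4>\bar\pot$, which is the required strict inequality.
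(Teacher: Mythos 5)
Your overall strategy coincides with the paper's (produce parameters $t_n$ at which $0$ is super-attracting of period $p_n$, show a uniformly positive measure set enters the immediate basin within $O(t_n^{-1})$ steps, then use shadowing of the post-critical set and a small-support bump observable), and you have correctly flagged step (ii) as the crux. However, the argument you sketch for (ii) has a genuine gap, and it is precisely the one most of \rsec{Discon} is built to overcome. You assert that the first-return map to $\Delta$ is uniformly expanding with bounded distortion, uniformly over small $t$, and hence that successive returns give roughly independent chances $\gtrsim t_n$ of landing in $U_n$. That assertion fails for $t=t_n$: because $0$ is a super-attracting periodic point of period $p_n$, the first return map $\phi_1$ to $U_1\subset\Delta$ has a unimodal \emph{central branch} $Z$ (of length $\asymp t_n^{1/2}$) on which the derivative vanishes and the distortion is unbounded. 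This is a structural feature absent from $f_0$: for the Misiurewicz base map the critical orbit never re-enters a well-chosen $\Delta$, so the return map is uniformly expanding, whereas forcing the critical orbit back to $0$ (which is exactly what transversality buys) creates the parabolic-looking central branch. \rsec{SetUp} only controls the return map away from the centre (\rlem{BigDer} requires $|x|>Ct$; \rlem{Ut2} treats non-central branches), and does not remove $Z$. A point entering $Z$ near $\partial V$ loiters there for many iterates of $\phi_1$, each costing $p_n$ steps, without ever getting a fresh bounded-distortion chance of hitting $V$, so the bound $(1-ct_n)^{c'/t_n}$ is not justified as written.

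To close the gap, the paper builds the modified induced map $F=\phi_1\circ\chi$ of \eqref{eq:F}, where $\chi$ is the first escape from $Z$ and $F$ is defined affinely on $V$, and checks that $F$ is Gibbs--Markov with all branches full and uniformly bounded distortion (\rlem{Psin}\ref{lemPsin:a}). The subtle point is the tail of the new inducing time: \rlem{Psin}\ref{lemPsin:b} only gets $m(\tau=j)\lll e^{-\alpha\sqrt j}$ (using $t_n^{-1}\mc e^{cn}$ to absorb the loitering in $Z$), which is just enough to bound $\int\tau^2\,dm$ uniformly in $n$ and run the variance estimate $\nu(\tau_{ck}>k)\to 0$ that replaces your ``order $1/t_n$ returns with high probability'' step. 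Your steps (i) and (iii) are correct in outline (the paper does (i) via Tsujii's parameter-distortion estimates rather than a bare transversality computation, and (iii) is essentially \rlem{Wlambda}, with the quantifier order handled as you describe), so the missing piece is precisely this inducing-scheme construction and its moment estimates.
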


\begin{rem}

    One could ask whether being $S$-unimodal is necessary or whether just assuming $\cC^2$ would suffice to prove these results. 
    We principally use the $S$-unimodal convexity condition to simplify control of distortion. 
    Ma\~n\'e's Hyperbolicity Theorem \cite{Mane:Hyp} for $\cC^2$ maps gives expansion and distortion control for the dynamics outside a neighbourhood of the critical point. So another path exists, but it would take more work and we wished to avoid further complicating an already technical paper. 
    Similarly, one could ask what happens for other, non-Misiurewicz, base parameters, for example Collet-Eckmann ones. 
    \end{rem}

\begin{example}
    Let $g_t(x) = x^2 + t_0 + t$ be a parametrisation of the quadratic family,
    with $t_0$ a Misiurewicz parameter in $[-2,1/4)$ and $t \in [0, 1/4 - t_0)$.
    Noting that $\partial_t g_t(x) \equiv 1$, transversality has been shown by
    Levin \cite{Lev}  (under more general
    summability conditions). This family does not leave $[-1,1]$ invariant,
    so it is not (quite) a \mruf. However, it can be transformed by a smooth family of affine transformations into a transversal \mruf. Hence our main theorems apply to the family $g_t$. 
\end{example}

\section{Preliminaries} \label{sectPrel}

  We shall use the notation $A(\cdot) = O(B(\cdot))$ and
  $A(\cdot) \lll B(\cdot)$ interchangeably, meaning that there exists
  a constant $C > 0$ such that $A(\cdot) \leq C B(\cdot)$ for all
  sufficiently large (or small) values of the argument.
  If both $A(\cdot) \lll B(\cdot)$ and $B(\cdot) \lll A(\cdot)$, we write
  $A(\cdot) \mc B(\cdot)$.

\begin{dfn} \label{defExt}
  Let $W,V$ be open intervals. Suppose that $g \colon W \to V$ is a
  $\cC^2$ surjective diffeomorphism with $|Dg|^{-1/2}$ convex.
  Suppose that $g$ can be extended to a   $\cC^2$ surjective diffeomorphism
  $g \colon \hat W \to \hat V$ with $|Dg|^{-1/2}$ convex, where
  $\hat W, \hat V$ are intervals and $\hat W$ compactly contains $W$.
  
  In this setup we say that $g$ is \emph{$\hat{W}$-extensible}.
  When both connected components of $\hat V\setminus V$ have
  length at least $\delta |V|$ for some $\delta > 0$,
  we say that $g$ is \emph{$\delta$-extensible}.
\end{dfn}

\begin{lem}[Koebe Principle {\cite[Theorem~IV.1.2]{deMvS}, \cite{Mis:IHES}}]
  \label{lemExt}
  Suppose that $g \colon W \to V$ is a $\cC^2$ surjective diffeomorphism with $|Dg|^{-1/2}$ convex,
  and that $g$ is $\delta$-extensible. Then we have the \emph{distortion bound}
  $$
    \sup_{x,y \in W} \frac{Dg(x)}{Dg(y)} \leq \frac{(1+\delta)^2}{\delta^2}.
  $$
  In addition, there exists a constant $C$ depending only on $\delta$, such that
  for all $x,y \in W$,
  \[
    \bigl| \log |Dg(x)| - \log |Dg(y)| \bigr|
    \leq \frac{C}{|W|} |x-y|
    .
  \]
\end{lem}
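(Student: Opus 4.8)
The plan is to obtain the first, multiplicative, inequality by quoting the classical Koebe Principle. Indeed it is exactly \cite[Theorem~IV.1.2]{deMvS} (see also \cite{Mis:IHES}): the hypotheses there --- a $\cC^2$ surjective diffeomorphism with $|Dg|^{-1/2}$ convex, extending with the same property to $\hat g \colon \hat W \to \hat V$ with each component of $\hat V \setminus V$ of length at least $\delta |V|$ --- are precisely $\delta$-extensibility. (Were a self-contained argument wanted, I would reduce to $g$ increasing by replacing $g$ with $-g(-\,\cdot\,)$, so that $\phi := |Dg|^{-1/2}$ is positive and convex on $\hat W$, and then run the cross-ratio argument of \cite[\S IV.1]{deMvS} --- convexity of $\phi$ being the $\cC^2$ substitute for non-positive Schwarzian derivative --- feeding the length hypothesis on the components of $\hat V \setminus V$ into the resulting cross-ratio expansion to produce $(1+\delta)^2/\delta^2$.)

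For the second inequality the plan is to bootstrap from the first, applied to two nested configurations. Write $K = (1+\delta)^2/\delta^2$ and $V = g(W)$, and note that $g$ is monotone on $W$. Applying the first inequality on $W$, for every $x \in W$ one gets $|Dg(x)| \le K |Dg(y)|$ for all $y \in W$; integrating over $y$ gives $|Dg(x)| \le K |V| / |W|$, so $\sup_W |Dg| \le K|V|/|W|$. Now fix $x \ne y$ in $W$, let $J$ be the image under $g$ of the interval between $x$ and $y$, so $J \subset V$ and $|J| = \bigl| \int_x^y Dg \, ds \bigr| \le |x-y| \sup_W |Dg| \le K |V|\,|x-y|/|W|$. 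The restriction of $g$ to that interval again extends, with $|Dg|^{-1/2}$ convex, to $\hat g \colon \hat W \to \hat V$, and the two components of $\hat V \setminus J$ contain those of $\hat V \setminus V$, so each has length at least $\delta|V| = (\delta|V|/|J|)\,|J|$; hence this restriction is $(\delta|V|/|J|)$-extensible, and the first inequality bounds its distortion by $(1 + |J|/(\delta|V|))^2$. Therefore, using $\log(1+u) \le u$,
\[
  \bigl| \log |Dg(x)| - \log |Dg(y)| \bigr|
  \le 2 \log\Bigl( 1 + \frac{|J|}{\delta |V|} \Bigr)
  \le \frac{2|J|}{\delta|V|}
  \le \frac{2K}{\delta}\,\frac{|x-y|}{|W|}
  ,
\]
which is the claim with $C = 2K/\delta = 2(1+\delta)^2\delta^{-3}$.

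As for the main obstacle: essentially all of it lives in the classical Koebe inequality being quoted --- the assertion that convexity of $|Dg|^{-1/2}$ by itself (no $\cC^3$ hypothesis, no Schwarzian derivative) already controls the distortion through the image-side geometry. Against that background the Lipschitz-in-log refinement is soft; the one point I would flag is that $\delta$-extensibility, an image-side condition, descends to the sub-configuration between $x$ and $y$ with an \emph{improved} relative gap $\delta|V|/|J|$, which tends to infinity as $|g(x)-g(y)|/|V| \to 0$, so Koebe re-applied there has distortion constant tending to $1$ --- this is what powers the factor $|x-y|/|W|$, and it holds simply because shrinking the domain interval only enlarges the complementary gaps in the image.
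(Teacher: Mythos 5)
Your proposal is correct. The paper offers no proof of this lemma beyond the citation: both inequalities are contained in the quoted Koebe Principle of de Melo--van Strien (Theorem~IV.1.2 there states the multiplicative bound together with a Lipschitz-type bound $|Dg(x)/Dg(y)-1|\leq C|x-y|/|W|$). Your bootstrapping derivation of the second inequality from the first --- restricting to the subinterval between $x$ and $y$ and observing that the image-side gaps only improve, so the sub-configuration is $\delta|V|/|J|$-extensible with $|J|\leq K|V|\,|x-y|/|W|$ --- is a valid, self-contained route to the same conclusion with the explicit constant $C=2(1+\delta)^2\delta^{-3}$.
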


Let us fix a constant $\Delta>1$ for which $\Delta$-extensible maps have distortion bounded by 2.

\begin{lem} \label{lemExtExp}
  Suppose that $g,W, \hat W, V, \hat V$ are as in Definition~\ref{defExt} and that,
  additionally, 
  each component of $\hat V \setminus V$ has length at least $10(1+\Delta)|V|$ and $|V|>|\hat W|$. 
  Then 
  $|Dg| > 5$ on $W$. 
\end{lem}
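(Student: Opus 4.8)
The plan is to combine the Koebe distortion bound with a mean-value / pigeonhole argument on the large "margins" of $\hat V$. First I would note that the hypothesis on the components of $\hat V \setminus V$ forces $\hat V$ to be long relative to $V$: since each of the two components has length at least $10(1+\Delta)|V|$, we get $|\hat V| \geq (1 + 20(1+\Delta))|V| \geq 21|V|$, say. In particular $g$ is certainly $\Delta$-extensible (the margins have length at least $10(1+\Delta)|V| > \Delta|V|$), so by our choice of $\Delta$ and Lemma~\ref{lemExt} the distortion of $g$ on $W$ is bounded by $2$; moreover the same reasoning applied to the extension $g \colon \hat W \to \hat V$ (which is $\delta$-extensible for, e.g., $\delta = \Delta$ after possibly passing to a slightly smaller extension, or directly: its own further extension is not needed since we only need distortion on $\hat W$ relative to its image $\hat V$) gives distortion of $g$ on $\hat W$ bounded by some universal constant $K = K(\Delta)$. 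Actually, the cleanest route is to bound distortion of $g$ on $\hat W$ directly from the hypothesis that $\hat W \Subset$ some even larger domain; but since Definition~\ref{defExt} only guarantees one level of extension, I would instead work with the distortion of $g$ on $W$ alone, which is what we have cleanly available.

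The main step is then a lower bound on $|Dg|$ somewhere on $\hat W$. Consider the preimage $g^{-1}$, which is a diffeomorphism $\hat V \to \hat W$. Write $\hat V \setminus V = L \sqcup R$ where $L, R$ are the two components, each of length $\geq 10(1+\Delta)|V|$, and let $L' = g^{-1}(L) \subset \hat W \setminus W$, similarly $R'$. Since $\hat W \setminus W = L' \sqcup R'$ and $L' \cup R' \subset \hat W$, at least one of them, say $L'$, has length $|L'| \leq |\hat W|$. By the mean value theorem applied to $g^{-1}$ on $L$, there is a point $\xi \in L$ with $|Dg^{-1}(\xi)| = |L'|/|L| \leq |\hat W| / (10(1+\Delta)|V|)$. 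Using the hypothesis $|V| > |\hat W|$, this gives $|Dg^{-1}(\xi)| < 1/(10(1+\Delta))$, i.e. $|Dg(g^{-1}(\xi))| > 10(1+\Delta)$ at the point $y_0 := g^{-1}(\xi) \in \hat W \setminus W \subset \hat W$.

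Finally I would propagate this lower bound from $y_0$ to all of $W$ using bounded distortion of $g$ on $\hat W$. The inequality $|Dg(x)| \geq |Dg(y_0)| / (\text{distortion on } \hat W) > 10(1+\Delta)/K$ for all $x \in \hat W \supset W$; choosing the constants so that $K \leq 2(1+\Delta)$ (which is exactly the content of "$\Delta$-extensible maps have distortion bounded by $2$" applied on the level of the $\hat W \to \hat V$ map, whose margins of width $\geq 10(1+\Delta)|V|$ dwarf $\Delta|V|$ — here I would double-check that the required further extensibility of $g\colon\hat W\to\hat V$ is part of Definition~\ref{defExt}'s setup, or else argue with the weaker but still finite Koebe constant) yields $|Dg| > 5$ on $W$. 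The main obstacle I anticipate is bookkeeping the two nested levels of extension correctly: the distortion bound from Lemma~\ref{lemExt} as stated applies to $g$ on $W$ (using the extension to $\hat W$), but to transfer the pointwise derivative bound I obtained at $y_0 \in \hat W \setminus W$ over to $W$ I need distortion control of $g$ on all of $\hat W$, which requires $g\colon\hat W \to \hat V$ to itself be $\delta$-extensible for a controlled $\delta$; establishing (or hypothesis-checking) that, and tracking the exact numerical constant $10(1+\Delta)$ so that it beats both the Koebe constant and the factor needed to clear $5$, is the delicate part.
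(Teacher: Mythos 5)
Your plan has a genuine gap, which you have partly diagnosed yourself but not resolved. The mean value theorem gives you a point $y_0 = g^{-1}(\xi) \in \hat W \setminus W$ with $|Dg(y_0)|$ large, but you have no control over \emph{where} $\xi$ lands inside the long margin $L$: it could sit right next to $\partial \hat V$, so $y_0$ could be arbitrarily close to $\partial \hat W$. Definition~\ref{defExt} provides exactly one level of extension, so Lemma~\ref{lemExt} controls the distortion of $g$ only on $W$ (or, more generally, on any subdomain of $\hat W$ whose image has adequate margins \emph{inside} $\hat V$). There is no stated hypothesis letting you bound the distortion of $g$ on all of $\hat W$, and so no way to transport the pointwise bound from $y_0$ back into $W$. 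The attempted repair of restricting $\xi$ to a subinterval $L_0 \subset L$ adjacent to $V$ runs into a numerical conflict: to get $|Dg(y_0)| > 10$ you would need $|L_0| > 10|\hat W|$, i.e.\ at least $10|V|$, but then the domain $V \cup L_0 \cup R_0$ is too large for the remaining margins of $\hat V$ to certify $\Delta$-extensibility, and you lose the distortion constant $2$.

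The paper sidesteps this entirely by never leaving a domain with controlled distortion. It takes an intermediate interval $V'$ with $V \subset V' \subset \hat V$ and $|V'| = 10|V|$, sets $W' = g^{-1}(V') \subset \hat W$, and observes that each component of $\hat V \setminus V'$ still has length $\geq 10\Delta|V| = \Delta|V'|$, so $g\colon W' \to V'$ is $\Delta$-extensible and has distortion bounded by $2$ on $W'$. Then, instead of hunting for a single good point via MVT, it uses the \emph{average} derivative on $W'$: $|V'|/|W'| > 10$ because $|W'| \leq |\hat W| < |V|$, and distortion $\leq 2$ promotes this average bound to a pointwise bound $|Dg| > 5$ on all of $W' \supset W$. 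The key structural difference is the strict inclusion $W' \subsetneq \hat W$, which keeps a margin for the Koebe estimate — precisely the margin your argument consumes.
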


\begin{proof}
  There is an interval $V'\supset V$ with $|V'| = 10 |V|$. 
  Let $W' = g^{-1}(V')$. 
  Each component of $\hat V \setminus V'$ has length at least $10\Delta|V| = \Delta |V'|$, so $g \colon W' \to V'$ is $\Delta$-extensible. 
  By Lemma~\ref{lemExtExp},
  the distortion of $g$ is bounded by $2$ on $W'$. 
  The result then follows from the estimate $|V'| = 10|V| > 10 |W'|.$
\end{proof}

Suppose that $f \colon I \to I$ is a continuous map with $f(\partial I) \subset \partial I$.

\begin{dfn}
  We say that an interval $A \subset I$ is a \emph{pullback} of an interval $U \subset I$
  (under $f$), if $A$ is a connected component of $f^{-n}(U)$ for some $n \geq 0$.
\end{dfn}

\begin{dfn} 
  An open interval $U$ is called \emph{regularly returning} if 
  $f^n(\partial U) \cap U = \emptyset$ for all $n \geq 0$.
\end{dfn}
This property is widely used \cite{GSS:metric,Martens:Nice,PrzRL} to simplify the study of induced maps thanks to the following elementary property.

\begin{lem} \label{lemNice}
  If $U$ is regularly returning,
  then pullbacks of $U$ are either nested or disjoint, that is, if $A,B$ are pullbacks of
  $U$ and if $A \cap B \ne \emptyset$, then either $A \subset B$ or $B \subset A$.
\end{lem}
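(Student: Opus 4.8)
The plan is to prove Lemma~\ref{lemNice} by a standard argument exploiting the defining property of regularly returning intervals. Suppose $A$ is a component of $f^{-a}(U)$ and $B$ is a component of $f^{-b}(U)$, with $a \le b$ (without loss of generality), and assume $A \cap B \ne \emptyset$; I want to show $B \subset A$. The key observation is that $f^a(A) = U$ and $f^a(B)$ is contained in a component of $f^{-(b-a)}(U)$, hence $f^a(B)$ is an interval whose image under $f^{b-a}$ is $U$; in particular $f^a(B)$ is a connected subset of $I$ meeting $U$ (since $A \cap B \ne \emptyset$ implies $f^a(A) \cap f^a(B) = U \cap f^a(B) \ne \emptyset$, using that $A \cap B \ne \emptyset$ forces overlapping images).

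The heart of the matter is the case $a = b$ reduced form: I claim $f^a(B) \subset \overline{U}$. Indeed, suppose not; then the interval $f^a(B)$, which meets $U$, must contain a point of $\partial U$, say $p \in f^a(B) \cap \partial U$. But $p \in \partial U$ means $p = f^{a-k}(\partial U\text{-point})$... more carefully: $f^a(B)$ meets $\partial U$, and $f^a(B)$ is itself a pullback of $U$ under $f^{b-a}$, so its endpoints map into $\partial U$ under $f^{b-a}$ (as it is a component of the preimage); applying $f^{b-a}$ to the point of $\partial U$ lying in $f^a(B)$ gives a point $f^{b-a}(p)$. If $b - a = 0$ this says $p \in U$, contradicting $p \in \partial U$ and $U$ open. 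If $b-a > 0$, then since $p$ is an interior point of the interval $f^a(B)$ or an endpoint: if $p$ is interior to $f^a(B)$ we would need $f^{b-a}$ to map a neighbourhood of $p$ into $U$, impossible since $p \in \partial U$ and applying regular returning with the point $p \in \partial U$: take the endpoint $q$ of $U$ equal to $p$; then $q \in \partial U$ and $f^{b-a}(q) \in U$, violating $f^n(\partial U) \cap U = \emptyset$.

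So the clean route is: reduce to showing that if $C$ is any pullback of $U$ (a component of $f^{-j}(U)$, $j \ge 0$) with $\overline{C} \cap \partial U \ne \emptyset$ is impossible when $C$ meets... Let me restructure. I would argue directly: since $A \cap B \ne \emptyset$ and $a \le b$, consider $f^a(B)$, an interval meeting $U = f^a(A)$. It suffices to show $f^a(B) \subset U$, for then $B \subset f^{-a}(U)$ and, as $B$ is connected and meets the component $A$, we get $B \subset A$. Suppose instead $f^a(B) \not\subset U$; since $f^a(B)$ is connected and meets the open interval $U$, it contains an endpoint $q$ of $U$, so $q \in f^a(B) \cap \partial U$. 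Now $f^a(B)$ is contained in (indeed equal to, by connectedness and $B$ being a full component) a component of $f^{-(b-a)}(U)$, so $f^{b-a}$ maps $f^a(B)$ onto $U$ and maps $\partial\bigl(f^a(B)\bigr)$ into $\partial U$. The point $q$ is either an endpoint of $f^a(B)$ or interior to it. If interior, $f^{b-a}(q) \in U$, but $q \in \partial U$, contradicting regular returning. If $q$ is an endpoint of $f^a(B)$, then $q \in f^a(B)$ still, and $f^{b-a}(q) \in \partial U \subset \partial U$, and also... here I need $q \in U$ to get the contradiction, which fails.

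The main obstacle, then, is handling the endpoint case precisely: when $q \in \partial U$ coincides with an endpoint of $f^a(B)$, one must observe that because $A \cap B \ne \emptyset$ the intervals $U$ and $f^a(B)$ genuinely \emph{overlap} (share interior points), so $f^a(B)$ cannot be merely tangent to $U$ at $q$ — it must cross into $U$, forcing a point of $\partial U$ to lie in the \emph{interior} of $f^a(B)$, or else $f^a(B) \subset U$ outright. Making this ``overlap forces crossing'' step rigorous (using that both are intervals and that $f^a(A) = U$ with $A \cap B \ne \emptyset$) is the one genuinely careful point; the rest is bookkeeping with preimages. Once that is settled, $f^a(B) \subset U$ follows, then $B \subset A$, completing the proof.
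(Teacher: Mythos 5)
Your overall strategy --- push $B$ forward by $f^a$, reduce to showing $f^a(B) \subset U$, and extract a contradiction from a boundary point $q \in f^a(B) \cap \partial U$ --- is sound, and in fact you were one observation away from closing it. The case split on whether $q$ is interior to or an endpoint of $f^a(B)$ is a red herring, and the ``main obstacle'' you flag at the end does not exist. Here is the clean ending: since $B \subset f^{-b}(U)$ and $a \le b$, every point of $f^a(B)$ lies in $f^{-(b-a)}(U)$ (if $x = f^a(y)$ with $y \in B$, then $f^{b-a}(x) = f^b(y) \in U$); in particular $f^{b-a}(q) \in U$. Combined with $q \in \partial U$ and $b-a \ge 0$, this directly violates the regular-returning condition $f^n(\partial U) \cap U = \emptyset$. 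No information about where $q$ sits inside $f^a(B)$ is needed. Note also that the step you invoke --- that $f^a(B)$ \emph{equals} a component of $f^{-(b-a)}(U)$ --- is both unjustified and, in general, false: if $f^a$ folds on $B$, the image $f^a(B)$ can be a proper (half-open) subinterval of that component. Fortunately the argument does not use it.

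The paper states this lemma without proof, so there is nothing to compare against directly. A standard alternative, perhaps marginally shorter, is: if neither $A \subset B$ nor $B \subset A$, then (taking $a \le b$) the open intervals overlap and there is an endpoint $p$ of $A$ lying in the open interval $B$; since $A$ is a component of $f^{-a}(U)$ one has $f^a(p) \in \partial U$, while $p \in B$ gives $f^b(p) = f^{b-a}\bigl(f^a(p)\bigr) \in U$, contradicting regular returning. Both routes use the same mechanism --- a point that, under the smaller iterate, sits on $\partial U$, and under the larger one, sits in $U$ --- so your approach is not fundamentally different, just phrased via forward images rather than via a boundary point of $A$.
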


We shall use \emph{induced maps} of the form $F(x) = f^{\tau(x)}(x)$
in much of the paper, where $\tau$ is an \emph{inducing time}, 
defined on a disjoint union of open intervals, called
\emph{branches,} where $\tau$ is constant.
A branch is \emph{full} if its image equals the range of the induced map.

\emph{First entry} maps and \emph{first return} maps to a regularly returning interval $U$
are primary examples of induced maps.  The first entry time is
$$e(x) = \inf\{k\geq 0 : f^k(x) \in U\},$$ 
while the first return time is
$$r(x) = \inf\{k\geq 1 : f^k(x) \in U\} = 1 + e(f(x)).$$ 
The first entry map $x \mapsto f^{e(x)}(x)$ and the first return map
$x \mapsto f^{r(x)}(x)$ are defined on the sets 
$\{x \in I \colon e(x) < \infty\}$ and $\{x \in I \colon r(x) < \infty\}$ respectively.

Since $U$ is regularly returning, it follows from \rlem{Nice} that
if $W$ is a branch of the first entry or the first return map with the corresponding
inducing time $n_W$, then $f^{n_W}(\partial W) \subset \partial U$.

Henceforth, suppose that $\{f_t\}$ is a \mruf.
As a Misiurewicz map, $f_0$ enjoys strong expansion properties:

\begin{lem}[{\cite[Theorem~III.6.3]{deMvS}}]
  \label{lemMisEst}
  Given any sufficiently small neighbourhood $U$ of $0$, there exist $C\in(0,1)$ and
  $\lambda>1$ such that for each $x \in I$
  \begin{itemize}
    \item if $f_0^j(x) \notin U$ for $0 \leq j \leq k-1$, then 
      $$|Df_0^k(x)| \geq C\lambda^k;$$
    \item if $f_0^k(x) \in U$, then
      $$|Df_0^k(x)| \geq C\lambda^k.$$
  \end{itemize}
\end{lem}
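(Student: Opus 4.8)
The statement is the classical Misiurewicz expansion estimate. The quickest route is to observe that both bullets are exactly the conclusion of Mañé's Hyperbolicity Theorem \cite{Mane:Hyp} applied to $f_0$: its hypotheses hold here because $0$ is the only critical point and, by Singer's Theorem (recalled above), an $S$-unimodal Misiurewicz map has no non-repelling periodic orbit — its critical orbit, being recurrent on a hyperbolic set, has nothing to attract. If one instead wanted a self-contained argument, I would proceed as follows.

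\emph{First bullet.} I would apply the hyperbolicity principle to the compact forward-invariant set $\Omega_U=\{y\in I: f_0^n(y)\notin U\text{ for all }n\geq0\}$, which omits $0$ and (by Singer) contains no non-repelling cycle: this gives $C_0,\lambda_0>1$ with $|Df_0^n|\geq C_0\lambda_0^n$ on $\Omega_U$. One then upgrades from full orbits in $\Omega_U$ to finite segments avoiding $U$ by a compactness argument: a sequence of segments of unbounded length with decaying expansion rate would accumulate on a point whose entire forward orbit avoids a slightly smaller neighbourhood of $0$ while violating the bound.

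\emph{Second bullet.} Given $f_0^k(x)\in U$, I would cut the segment at its visits to $U$ — say at times $0\leq i_0<\dots<i_p=k$ — handle $[0,i_0]$ with the first bullet, and show each excursion $[i_l,i_{l+1}]$ is uniformly expanding. With $y:=f_0^{i_l}(x)$, $s:=|y|$, $r:=i_{l+1}-i_l$: non-degeneracy gives $|Df_0(y)|\mc s$ and $|f_0(y)-f_0(0)|\mc s^2$, and since (Misiurewicz) the post-critical orbit sits in a hyperbolic repelling set at a fixed distance $\gamma>\diam U$ from $0$, the point $f_0^j(y)$ shadows $f_0^j(0)$ — away from $U$ — for as long as $f_0^j([f_0(y),f_0(0)])$ stays shorter than $\diam U$; on that stretch the Koebe principle (\rlem{Ext}; its space comes from the first bullet) gives $|Df_0^j(f_0(y))|\mc|Df_0^j(f_0(0))|\geq C_1\lambda_1^j$ with $\lambda_1>1$ the post-critical expansion rate. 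Writing $j^*$ for the first time that interval reaches length $\mc\diam U$, one has $\diam U\mc s^2|Df_0^{j^*}(f_0(0))|$, hence $s^{-1}\ggg\lambda_1^{j^*/2}$, while $r\geq j^*$ and the steps $j^*,\dots,r-1$ avoid $U$; combining the critical contraction, the shadow expansion, and the first bullet for the tail,
$$
  |Df_0^r(y)|\;\ggg\;s\cdot s^{-2}\cdot\lambda_0^{\,r-j^*}\;\ggg\;\lambda_1^{j^*/2}\lambda_0^{\,r-j^*}\;\geq\;\lambda_2^{\,r},\qquad\lambda_2:=\min(\lambda_1^{1/2},\lambda_0)>1.
$$
For $\diam U$ small the minimal return time to $U$ is large, so the per-excursion constants are absorbed into $\lambda_2$, and multiplying over excursions and combining with the initial stretch gives $|Df_0^k(x)|\geq C\lambda^k$ for a common $C\in(0,1)$ and $\lambda=\min(\lambda_0,\lambda_2^{1/2})>1$.

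\emph{Main obstacle.} The one genuinely delicate point — and why ``sufficiently small $U$'' is needed — is obtaining the distortion (Koebe) control along the shadow of the post-critical orbit: the Koebe space for the pullbacks of intervals around $f_0^j(0)$ is itself a consequence of the expansion in the first bullet, so in a self-contained treatment the two bullets must be proved together rather than strictly in sequence, with the shadowing intervals tracked carefully. This is carried out in \cite{deMvS,Mis:IHES}; everything else (Singer's and Mañé's theorems, and the Koebe principle for $S$-unimodal maps) is standard.
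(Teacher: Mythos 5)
The paper offers no proof of this lemma, simply citing \cite[Theorem~III.6.3]{deMvS}, and your sketch correctly reproduces the standard argument from that reference: Ma\~n\'e/Singer for orbit segments avoiding $U$, then the excursion decomposition in which the contraction $|Df_0(y)|\mc|y|$ near the critical point is repaid by the expansion $\lambda_1^{j^*}$ accumulated while shadowing the post-critical orbit, with $|y|^{-1}\ggg\lambda_1^{j^*/2}$ and the remaining constants absorbed by taking $U$ small so that return times are long. The one inaccuracy is your opening claim that both bullets are ``exactly the conclusion of Ma\~n\'e's Hyperbolicity Theorem'': Ma\~n\'e only yields the first bullet (expansion along orbit segments avoiding a neighbourhood of the critical point), whereas the second bullet genuinely requires the Misiurewicz condition on the post-critical orbit --- which your own self-contained excursion argument then correctly supplies.
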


The maps $f_t$ for $t \neq 0$ are not necessarily Misiurewicz,
and \rlem{MisEst} does not apply. Still, for $t$ small enough,
a similar statement holds:

\begin{lem}[{\cite[Theorem~III.6.4]{deMvS}}]  \label{lemMisEstF}
  There exists $C\in(0,1)$ and $\lambda>1$ such that,
  given any sufficiently small neighbourhood $U$ of $0$,
  the following holds for all sufficiently small $t$.
  \begin{itemize}
    \item If $f_t^j(x) \notin U$ for $0 \leq j \leq k-1$, then 
      $$|Df_t^k(x)| \geq C\lambda^k \inf_{0\leq j <k} |Df_t(f_t^j(x))|.$$
    \item If 
      $f_t^j(x) \notin U$ for $0 \leq j \leq k-1$ and $f_t^k(x) \in U$, then
      $$|Df_t^k(x)| \geq C\lambda^k.$$
  \end{itemize}
\end{lem}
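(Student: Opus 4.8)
The plan is to treat this as the perturbative companion of \rlem{MisEst} — it is Theorem~III.6.4 of \cite{deMvS}, restated for a \mruf{} — and to prove it in two layers: a robustness-of-hyperbolicity step for orbit segments that stay out of a \emph{fixed} reference neighbourhood of $0$, and a bookkeeping step for passages through the thin annulus between $U$ and that reference neighbourhood. I would first fix, once and for all, a regularly returning neighbourhood $U_1$ of $0$ to which \rlem{MisEst} applies for $f_0$, with resulting constants $C_1\in(0,1)$ and $\lambda_0>1$; $U_1$ will be shrunk finitely many times below, but always kept fixed relative to $U$.

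For the robustness step, \rlem{MisEst} furnishes some $N$ with $|Df_0^N(y)|\geq 2$ whenever $y,\dots,f_0^{N-1}(y)\notin U_1$. By joint continuity of $(t,x)\mapsto Df_t^N(x)$ and compactness of $I\setminus U_1$ (on which $|Df_t|$ is uniformly bounded below for small $t$), this persists as $|Df_t^N(y)|\geq 3/2$ for $f_t$-orbit segments avoiding $U_1$, and a routine chaining argument upgrades it to $|Df_t^k(x)|\geq C_*\lambda_*^k$ (with $C_*>0$, $\lambda_*>1$ uniform in small $t$) whenever $f_t^j(x)\notin U_1$ for $0\leq j<k$. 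This already settles the lemma when $U\supseteq U_1$ (the infimum factor being $\leq\sup_I|Df_t|$), so the remaining work is for $U\subsetneq U_1$.

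The heart is the passage analysis. Call $x_j:=f_t^j(x)$ a \emph{passage} if $x_j\in U_1\setminus U$, and set $\eps=|x_j|$. By non-degeneracy of the critical point $|Df_t(x_j)|\mc\eps$ and $|x_{j+1}-v_t|\lll\eps^2$, where $v_t=f_t(0)$. Since the post-critical orbit of $f_0$ avoids a $\delta_0$-neighbourhood of $0$, I would take $|U_1|\ll\delta_0$ — so passages cannot occur on two consecutive steps, because $x_{j+1}\approx v_t\notin U_1$ — and then take $t$ small enough, depending on $U$, that $f_t$ shadows $f_0$ for $O(\log|U|^{-1})$ steps. Comparing the $f_t$-orbit of $x_{j+1}$ to the post-critical orbit $v_0,f_0(v_0),\dots$, using the Misiurewicz expansion $|Df_0^i(v_0)|\geq C_1\lambda_0^i$ and bounded distortion along it (Koebe, \rlem{Ext}, with pullbacks of the regularly returning $U_1$ supplying the extensions), I would produce an integer $n^*=n^*(\eps)$ with: $f_t^i(x_{j+1})\notin U_1$ for $0\leq i\leq n^*$; $|Df_t^{n^*}(x_{j+1})|\ggg\eps^{-2}$; and $n^*\leq\frac{2}{\log\lambda_0}\log\eps^{-1}+O(1)$. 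Hence the \emph{block} consisting of $x_j$ together with its following $n^*$ iterates has expansion $\ggg\eps\cdot\eps^{-2}=\eps^{-1}$ over a length $\lll\log\eps^{-1}$, i.e.\ a per-step rate exceeding $\lambda_0^{1/2}$; fixing $\lambda:=\min(\lambda_*,\lambda_0^{1/2})$ and $U_1$ small enough, every such block contributes a factor $\geq\lambda^{(\text{block length})}$ with no residual constant.

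Finally I would assemble. An orbit segment avoiding $U$ splits into an initial run outside $U_1$ (contributing $\geq C_*\lambda^{(\cdot)}$ by the first step) followed by a chain of blocks as above, only the last of which can be truncated by the end of the segment. If the last block is complete, multiplying gives $|Df_t^k(x)|\geq C_*\lambda^k$, implying both bullets. If the last block is truncated, its contribution is $\geq c'\lambda^{(\cdot)}|Df_t(x_{j_{\mathrm{last}}})|\geq c'\lambda^{(\cdot)}\inf_{0\leq j<k}|Df_t(f_t^j(x))|$, giving the first bullet; and for the second bullet, $f_t^k(x)\in U\subseteq U_1$ forces the last block to be complete, because the orbit cannot re-enter $U_1$ before a passage's excursion is over, so the clean bound holds with no $\inf$ factor. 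I expect the main obstacle to be exactly the passage analysis of the third paragraph — controlling the excursion length and its accumulated expansion uniformly in $t$, even though $f_t$ for $t\neq 0$ is not Misiurewicz and its own post-critical orbit may misbehave — and the point that rescues it is that the relevant excursion has length only $O(\log|U|^{-1})$, a bound independent of $t$, so that choosing $t$ small \emph{after} $U$ (the order of quantifiers in the statement) leaves all the genuine expansion to be supplied by the Misiurewicz property of $f_0$ near its post-critical set.
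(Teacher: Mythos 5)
First, a point of comparison: the paper does not prove this lemma at all --- it is quoted directly from de Melo--van Strien as Theorem~III.6.4 (just as \rlem{MisEst} is quoted as Theorem~III.6.3) --- so you are reconstructing the textbook argument rather than anything in the paper. Your reconstruction has the right architecture (robust expansion outside a fixed reference interval $U_1$, plus a passage-and-excursion analysis for visits to $U_1\setminus U$, with $t$ chosen small after $U$ so that the excursion of length $O(\log|U|^{-1})$ is governed by the Misiurewicz structure of $f_0$ rather than by $f_t$'s own possibly ill-behaved critical orbit), and the passage analysis itself is sound: $|W|=|f_t(x_j)-y_t|\mc\eps^2$ once $t\lll|U|^2$, the excursion stays outside $U_1$ because each $f_t^i(W)$ has length $<\delta_0$ and contains a point of $\Lambda_t$, and bounded distortion gives $|Df_t^{n^*}(x_{j+1})|\ggg\delta_0\eps^{-2}$ with $n^*\leq 2\log_{\lambda_*}\eps^{-1}+O(1)$.

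The gap is in the assembly. The orbit does not decompose into ``an initial run outside $U_1$ followed by a chain of blocks'': when an excursion ends (the interval $W$ has grown to size $\delta_0$), the orbit point is generically far from $U_1$, so between consecutive blocks there is a free run outside $U_1$. Each free run contributes its own prefactor $C_*<1$, and the number of free runs equals the number of passages, which is unbounded in $k$; these constants multiply and destroy the estimate unless each block carries enough surplus expansion to absorb the constant of the run that follows it. Your choice $\lambda=\lambda_0^{1/2}$ sits exactly on the boundary where this fails: the block expands by $\mc\eps^{-1}$ over a length $1+n^*$ with $\lambda_*^{(1+n^*)/2}\mc\eps^{-1}$, so the surplus $\eps^{-1}/\lambda^{1+n^*}$ is a constant independent of $\eps$, does not improve as $U_1$ shrinks, and cannot even be guaranteed to exceed $1$, let alone $1/C_*$. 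The standard fix is to take $\lambda$ strictly smaller than $\lambda_*^{1/2}$: then the surplus per block is $\ggg\eps^{-\beta}$ for some $\beta>0$, and since $\eps\leq|U_1|$, shrinking $U_1$ forces it above $1/C_*$; each free run's constant is then absorbed by the preceding block, and only one global constant survives, coming from the initial run and a possible truncated terminal piece --- which is exactly where the factor $\inf_{0\le j<k}|Df_t(f_t^j(x))|$ of the first bullet enters, as you correctly describe.
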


Expansion entails a uniform distortion bound. 
\begin{lem}
  \label{lemDistExp}
  Let $U$ be a neighbourhood of $0$.
  There is a constant $C>1$ such that, for all $t$ small enough,  the following holds.
  If $W$ is an open interval such that 
  $f_t^k(W) \cap U = \emptyset$ for $0 \leq k < n$ and
   $x,y \in W$, 
  then 
  \[
    \bigl| \log |Df_t^n(x)| - \log |Df_t^n(y)| \bigr|
    \leq (\log C) |f_t^n(x) - f_t^n(y)|.
  \]
\end{lem}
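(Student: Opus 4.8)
\emph{Proof strategy.} The plan is to expand $\log|Df_t^n|$ as a telescoping sum along the orbit, bound each increment using that $\log|Df_t|$ is Lipschitz away from $U$, and then invoke the expansion of \rlem{MisEstF} to replace each intermediate distance $|f_t^k(x)-f_t^k(y)|$ by a geometrically small multiple of the terminal distance $|f_t^n(x)-f_t^n(y)|$.

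First I would fix a neighbourhood $U_0\subseteq U$ of $0$ small enough that \rlem{MisEstF} applies, with constants $C_0\in(0,1)$ and $\lambda>1$ valid for all sufficiently small $t$; an orbit avoiding $U$ avoids $U_0$ a fortiori, so nothing is lost. Since $f_t$ is $\cC^2$ in $(x,t)$ and its only critical point is $0\in U$, compactness of $I\setminus U$ and continuity give, uniformly for small $t$, constants $\delta>0$ with $|Df_t|\ge\delta$ on $I\setminus U$ and $M$ with $|D^2f_t|\le M$ on $I$; hence $\log|Df_t|$ is $L$-Lipschitz on each component of $I\setminus U$ with $L:=M/\delta$. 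Now set $\xi_k=f_t^k(x)$, $\eta_k=f_t^k(y)$ and write
\[
  \log|Df_t^n(x)|-\log|Df_t^n(y)|=\sum_{k=0}^{n-1}\bigl(\log|Df_t(\xi_k)|-\log|Df_t(\eta_k)|\bigr).
\]
Because $W$ is an interval with $f_t^k(W)\cap U=\emptyset$ for $0\le k<n$, each $f_t^k(W)$ with $0\le k\le n-1$ is an interval missing $0$, so the interval $[\xi_k,\eta_k]$ lies in a single component of $I\setminus U$ and $|\log|Df_t(\xi_k)|-\log|Df_t(\eta_k)||\le L|\xi_k-\eta_k|$. Moreover $f_t^j$ maps $[\xi_k,\eta_k]\subseteq f_t^k(W)$ into $f_t^{j+k}(W)$, which misses $U$ (hence $U_0$ and $0$) for $0\le j\le n-k-1$, so $f_t^{n-k}$ restricts to a diffeomorphism on $[\xi_k,\eta_k]$; the mean value theorem, together with \rlem{MisEstF} applied with $U_0$ (and \rlem{MisEst} when $t=0$) and the bound $|Df_t|\ge\delta$ off $U$, yields some $z$ between $\xi_k$ and $\eta_k$ with
\[
  |f_t^n(x)-f_t^n(y)|=|Df_t^{n-k}(z)|\,|\xi_k-\eta_k|\ge C_0\delta\lambda^{n-k}\,|\xi_k-\eta_k|.
\]

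Combining these two inequalities and summing the geometric series $\sum_{k=0}^{n-1}\lambda^{-(n-k)}<(\lambda-1)^{-1}$ gives
\[
  \bigl|\log|Df_t^n(x)|-\log|Df_t^n(y)|\bigr|\le\frac{L}{C_0\delta(\lambda-1)}\,|f_t^n(x)-f_t^n(y)|,
\]
so one takes $C:=e^{L/(C_0\delta(\lambda-1))}>1$, which finishes the proof.

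I do not expect a genuine obstacle: this is a standard ``expansion implies distortion control'' argument. The only points that need care are keeping all the constants uniform in $t$ (which follows from $\cC^2$ dependence of $f_t$ on $(x,t)$ together with compactness of $I$ and $I\setminus U$), and the mismatch that \rlem{MisEstF} requires a small neighbourhood whereas the $U$ in the statement is arbitrary — resolved by passing to $U_0\subseteq U$ at the outset. The edge cases are immediate: $n=0$ is trivial, and the term $k=n-1$ only uses $|Df_t|\ge\delta$.
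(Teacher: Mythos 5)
Your proof is correct and follows essentially the same route as the paper's: telescope $\log|Df_t^n|$, use the uniform Lipschitz bound on $\log|Df_t|$ off $U$, and invoke the expansion from Lemma~\ref{lemMisEstF} to dominate $\sum_k |f_t^k(x)-f_t^k(y)|$ by a constant times $|f_t^n(x)-f_t^n(y)|$. The only difference is cosmetic: you derive the summed estimate term-by-term via the mean value theorem and a geometric series, whereas the paper simply invokes it as a direct consequence of Lemma~\ref{lemMisEstF}; you also explicitly pass to a smaller $U_0\subseteq U$, a point the paper leaves implicit.
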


\begin{proof}
  By \rlem{MisEstF}, there is a constant $C_0 > 0$
  (independent of $t$, $W$, $n$) such that, for all $x,y \in W$,
  \[
    \sum_{k=0}^{n-1} |f_t^k(x) - f_t^k(y)|
    \leq C_0 |f_t^n(x) - f_t^n(y)|
    .
  \]
  As $f_0$ has a unique critical point at $0$ and
  $f_t(x)$ is a $\cC^2$ function of $(x,t)$, $Df_t(x)$ is bounded away from 0 on $I\setminus U$ and $D^2f_t$ is bounded.
  Consequently, there exists a constant $C_1 > 0$, depending on $U$ but not on $t$,
  such that $|D (\log |D f_t|)| \leq C_1$ on $I \setminus U$.
  For $x,y \in W$, we deduce
  \begin{align*}
      \bigl| \log  |D f_t^{n}(y)| - \log |D f_t^{n}(x)| \bigr| &
     = \Bigl| \sum_{k=0}^{n-1} \int_{f_t^k(x)}^{f_t^k(y)} D (\log |D f_t|) (z) \, dz \Bigr| \\
     & \leq C_1
      \sum_{k=0}^{n-1} |f_t^k(x) - f_t^k(y)|
    \\ & \leq C_0 C_1
      \, |f_t^n(x) - f_t^n(y)|
    .
  \end{align*}
\end{proof}
The map $f_0$, being Misiurewicz, has an induced map with good properties.
\begin{lem}[{\cite[Proof of Lemma~V.3.2]{deMvS}}]
  \label{lemRegRet}
  For the map $f_0$, there is  an arbitrarily small regularly-returning open inverval $J$ containing $0$, disjoint from the post-critical orbit, for which $f(\partial J)$ is a (single) periodic point. Each branch of the first return map is mapped diffeomorphically \emph{onto} $J$. The complement in $J$ of the domain of the first return map has zero Lebesgue measure. There is a uniform distortion bound for all iterates of the first return map. 
\end{lem}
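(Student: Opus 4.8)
The plan is to follow the construction in the proof of Lemma~V.3.2 of \cite{deMvS}: build $J$ from preimages of a well-placed repelling periodic point, then extract the three analytic properties from the nice-interval formalism and the Koebe Principle (\rlem{Ext}).

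For the construction, fix $\delta_0>0$ so that the closure of the post-critical orbit --- which is hyperbolic repelling, hence avoids the critical point --- is disjoint from $(-\delta_0,\delta_0)$. By \rlem{MisEst}, $f_0$ expands uniformly along orbit segments avoiding $(-\delta_0/2,\delta_0/2)$, so the set $R:=\{x\in I : f_0^n(x)\notin(-\delta_0/2,\delta_0/2)\ \text{for all }n\ge0\}$ is a hyperbolic repelling Cantor set which contains $f_0(0)$ and whose periodic points, dense in $R$, are repelling by Singer's Theorem. I would pick a repelling periodic point $p\in R$ off the post-critical orbit with $p>f_0(0)$ and $p-f_0(0)$ small; such $p$ exist arbitrarily close to $f_0(0)$. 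Since $f_0(0)$ is an extremum, $f_0^{-1}(p)$ then consists of two points $x_-<0<x_+$ near $0$, and $J:=(x_-,x_+)$ satisfies: $0\in J$; $f_0(\partial J)=\{p\}$ is a single periodic point; $\overline J\subset(-\delta_0,\delta_0)$ is disjoint from the post-critical orbit; and $f_0^n(\partial J)=f_0^{n-1}(p)\in\Orb(p)\subset R$ avoids $J$ for $n\ge1$, so $J$ is regularly returning. Letting $p\to f_0(0)$ makes $|J|$ arbitrarily small. In the same way I would fix a slightly larger interval $\hat J\supset J$ of this type, with $J\Subset\hat J$ and both components of $\hat J\setminus J$ of length at least $\delta_1|J|$ for a fixed $\delta_1>0$; this $\hat J$ is needed only for the distortion bound.

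For fullness, let $W$ be a branch of the first return map to $J$ with return time $r$. Because $0\in J$, no point of $W$ can reach $0$ before returning: if $0\in f_0^k(W)$ with $0\le k<r$, then $f_0^r(W)$ would contain the post-critical point $f_0^{r-k}(0)$, contradicting $f_0^r(W)\subseteq J$. Hence $f_0^r$ is a diffeomorphism on $\overline W$, and since $J$ is regularly returning, $f_0^r(\partial W)\subseteq\partial J$ (the discussion after \rlem{Nice}); monotonicity then forces $f_0^r(\partial W)=\partial J$, so $f_0^r(W)=J$. Applying this inductively to compositions shows every branch of every iterate of the first return map maps diffeomorphically onto $J$. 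For the domain: its complement in $J$ consists of points never re-entering $J$; taking a small $U\Subset J$ about $0$, any such point $x$ has $f_0(x)\in E:=\{y : f_0^n(y)\notin U\ \text{for all }n\ge0\}$, and $E$ is a hyperbolic repelling set, hence Lebesgue-null by the classical combination of uniform expansion (\rlem{MisEst}) and bounded distortion (\rlem{DistExp}); cf.\ \cite{Mane:Hyp}, \cite{deMvS}. Since $f_0$ is non-singular, $m(f_0^{-1}(E))=0$.

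The main work is the uniform distortion bound for all iterates, where $\hat J$ enters. Let $W$ be a branch of the $n$-th iterate of the first return map, a component of $f_0^{-N}(J)$ carried diffeomorphically onto $J$ by $f_0^N$, with $0\notin f_0^k(W)$ for $0\le k<N$ by the argument above; let $\hat W$ be the component of $f_0^{-N}(\hat J)$ containing $W$. As $\hat J$ too is disjoint from the post-critical orbit, the same reasoning gives $0\notin f_0^k(\hat W)$ for $0\le k<N$ and $f_0^N(\hat W)=\hat J$; and since $f_0$ is S-unimodal and $\overline{\hat W}$ meets the critical point at no iterate before $N$, $|Df_0^N|^{-1/2}$ is convex on $\hat W$ (convexity of $|D\,\cdot\,|^{-1/2}$ is preserved under composition, cf.\ \cite[\S IV.1]{deMvS}). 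Thus $f_0^N\colon W\to J$ is $\hat W$-extensible, hence $\delta_1$-extensible, and the Koebe Principle (\rlem{Ext}) bounds its distortion by $(1+\delta_1)^2/\delta_1^2$, uniformly in $n$ and $W$. I expect the genuinely delicate point to be the construction itself: delivering, at every small scale, a nested pair $J\Subset\hat J$ of nice intervals with $f_0(\partial J)$ periodic, both off the post-critical orbit and with $\hat J\setminus J$ of definite relative size, is exactly what keeps the enlarged pullbacks clear of the critical point and hence full.
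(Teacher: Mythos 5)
The paper does not prove this lemma --- it is quoted from \cite[Proof of Lemma~V.3.2]{deMvS} --- so there is no in-paper argument to compare against; your reconstruction follows what is evidently the intended construction ($J$ as the critical pullback of a periodic point $p$ close to the critical value, fullness of branches from disjointness of $J$ from the post-critical orbit, and Koebe applied to an enlarged pullback), and the analytic parts (fullness, null complement, uniform distortion) are carried out correctly. The one load-bearing step you assert rather than prove is the existence of the periodic point $p$: density of periodic points in the hyperbolic set $R$ is not automatic (locally maximal expanding sets of interval maps need not be transitive, and you need $p$ on the correct side of $f_0(0)$, i.e.\ inside the image of a neighbourhood of $0$). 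You cannot invoke \rlem{Prep} here, since that lemma is derived from the present one. A clean patch: periodic points are dense in the core $[f_0^2(0),f_0(0)]$ (any small interval adjacent to $f_0(0)$ grows under iteration by \rlem{MisEst} until it has definite length, after which boundedly many further iterates cover it monotonically on a sub-branch, producing a periodic point inside it); given such a $p_0$ near $f_0(0)$ and off the countable post-critical orbit, replace it by the point $p$ of its orbit closest to $f_0(0)$ from the core side, so that $\Orb(p)\cap[f_0(0),p)=\emptyset$ and hence $\Orb(p)\cap J=\emptyset$, which is all that regular returning requires. Finally, the step you flag as ``genuinely delicate'' --- producing $\hat J$ of the same special type --- is unnecessary: the entire neighbourhood $(-\delta_0,\delta_0)$ is disjoint from $\Lambda_0$, so \emph{any} interval $\hat J$ with $J\Subset\hat J\subset(-\delta_0,\delta_0)$ and components of $\hat J\setminus J$ of length $\geq\delta_1|J|$ serves for the pullback and Koebe argument; $\hat J$ need be neither regularly returning nor bounded by preimages of periodic points.
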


Let $\theta_0 > 0$ be small enough that for any
neighbourhood $U$ of $0$ contained in $(-\theta_0,\theta_0)$,
the conclusions of \rlem{MisEst} and \rlem{MisEstF} hold.
We further require that $\theta_0 < (10(1+\Delta))^{-1}$,
the latter constant as in \rlem{ExtExp}.

\begin{lem} \label{lemPrep}
    Let $J \subset (-\theta_0,\theta_0)$ be an interval given by \rlem{RegRet}.
  Periodic points of $f_0$ are dense in $J$. Preimages of any point in $J$ are dense in $J$ and hence in $I$. 
\end{lem}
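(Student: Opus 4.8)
The plan is to exploit the defining property of the interval $J$ from \rlem{RegRet}: the boundary $\partial J$ maps (after one iterate, or directly, depending on the component) to a single periodic point $p$ of $f_0$. Since $f_0$ is Misiurewicz, $p$ is hyperbolic repelling by Singer's Theorem, and the closure of the post-critical orbit is a hyperbolic repelling set $X$ on which $f_0$ has sensitive dependence / topological transitivity-type behaviour. The key structural input is that the first return map to $J$ has all branches mapping diffeomorphically \emph{onto} $J$ and the complement of its domain has zero measure; in particular the domain is dense in $J$. From full branches one obtains, by the usual symbolic-dynamics argument, that $f_0$ restricted to (the closure of) the repeller generated inside $J$ is semiconjugate to a full shift, hence has dense periodic points and is topologically exact on $J$.

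First I would argue density of periodic points in $J$. Take any subinterval $J' \subset J$. Because the complement of the domain of the first return map $R$ has zero Lebesgue measure, $J'$ meets some branch $W$ of $R$, and in fact (by \rlem{Nice}, pullbacks being nested or disjoint) we may pass to a deeper level and find a branch $W' \subset J'$ of some iterate $R^k$ of the first return map, with $R^k \colon W' \to J$ a diffeomorphism onto $J$. Since $\overline{W'} \subset J' \subset J = R^k(W')$, the map $R^k$ has a fixed point in $W'$ by the intermediate value theorem (applied to $R^k(x) - x$, which changes sign as $x$ ranges over $W'$ since $R^k(W') = J \supset \overline{W'}$). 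This fixed point of $R^k$ is a periodic point of $f_0$ lying in $J' $, proving density of periodic points in $J$.

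Next I would prove density of preimages. Fix any target point $q \in J$. Using the same full-branch structure, for any subinterval $J' \subset J$ there is $k$ and a branch $W' \subset J'$ with $R^k \colon W' \to J$ a diffeomorphism onto $J$; hence $q$ has a preimage under $R^k$ (so under some iterate $f_0^N$) inside $W' \subset J'$. Thus the preimages of $q$ are dense in $J$. To upgrade density in $J$ to density in $I$: the map $f_0$ restricted to the repeller is (eventually) onto $I$ in the sense that $\bigcup_n f_0^n(J') = I$ for any interval $J' \subset J$ — this follows from the uniform expansion of \rlem{MisEst}, which forces any interval staying near $0$ too long to be impossible, so pullbacks (equivalently forward images under the non-invertible $f_0$) of $J'$ grow and eventually cover $I$ up to the (finitely many) turning-point obstructions, which a unimodal map with $f_0(\partial I)\subset\partial I$ and $|f_0'|>1$ on $\partial I$ does not present. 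Concretely: let $J' \subset J$; the forward images $f_0^n(J')$ grow in size (by \rlem{MisEst}, expansion is definite whenever the orbit is outside a neighbourhood of $0$, and the standard argument shows an interval cannot keep returning close to $0$ without its image having grown), so some $f_0^n(J') \supset J$ again, and iterating, $\bigcup_n f_0^n(J') = I$. Then preimages of $q$ being dense in $J$, together with surjectivity onto $I$, gives preimages of $q$ dense in $I$: for any interval $I' \subset I$, some $f_0^n(I') \supset J'$ contains a preimage of $q$, so $I'$ contains a preimage of that preimage, i.e.\ a preimage of $q$.

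The main obstacle is the ``interval-growth'' step — showing that the forward iterates of a small subinterval $J' \subset J$ eventually cover $J$ (hence, with the surjectivity argument, all of $I$). This requires combining the Misiurewicz expansion estimate \rlem{MisEst} with a covering/combinatorial argument to rule out the pathology of an interval that shadows a post-critical orbit forever without growing; the cleanest route is to observe that $J$ is disjoint from the post-critical orbit, so a subinterval of $J$, when it does return to a neighbourhood of $0$, does so as (part of) a branch domain that is mapped onto $J$, and to bootstrap from there. Everything else is a routine application of the intermediate value theorem and the full-branch property guaranteed by \rlem{RegRet} and \rlem{Nice}.
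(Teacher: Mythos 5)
Your proposal follows the same skeleton as the paper's proof: work with the first return map $\phi$ to $J$, use the full-branch structure from \rlem{RegRet}, get periodic points via the intermediate value theorem in shrinking branches, note that each branch of $\phi^k$ contains a preimage of any target $q \in J$, and then use expansion outside $J$ (\rlem{MisEst}) to extend density from $J$ to $I$. That is exactly what the paper does, and the core idea is sound.

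There is, however, one step where your reasoning is not quite tight. You write that by \rlem{Nice} (pullbacks nested or disjoint) one ``may pass to a deeper level and find a branch $W' \subset J'$ of some iterate $R^k$.'' Nested-or-disjoint alone does not give this: it guarantees that the branch of $\phi^k$ through a fixed generic point shrinks to a nested sequence, but not that the diameters tend to zero. What you actually need, and what the paper explicitly supplies, is that the \emph{maximal} diameter of a branch of $\phi^k$ tends to $0$ as $k \to \infty$, a consequence of the uniform expansion (\rlem{MisEst}) and uniform distortion (\rlem{RegRet}). Once one has shrinking diameters, the branch eventually lies in $J'$. You should cite \rlem{MisEst} and \rlem{RegRet} at this point, not \rlem{Nice}.

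On the step from density in $J$ to density in $I$, you take a slightly different route: instead of observing directly (as the paper does) that pullbacks of $J$ — intervals $A \subset I \setminus J$ with $f^n : A \to J$ a diffeomorphism — are dense in $I$ by expansion outside $J$, you run a forward-iterate ``interval growth'' argument to show $\bigcup_n f_0^n(J') = I$ and then pass to preimages. Both routes are valid and ultimately rely on the same expansion estimate, but the paper's formulation is cleaner: it sidesteps the turning-point issues you raise, because pullbacks are monotone by definition, whereas forward images of intervals across the critical point can fold. Your version can be made to work, but it requires the extra topological-exactness bookkeeping you allude to; the paper's phrasing avoids needing it.
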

\begin{proof} 
  Let $\phi \colon J \to J$ be the first return map to $J$ under the iterations of $f_0$.
  The union of branches of $\phi^n$ has full Lebesgue measure in $J$ for each $n$.
  Because of the uniform distortion and expansion bounds given by 
  Lemmas~\ref{lemRegRet} and~\ref{lemMisEst}, the maximal diameter of a 
  branch of $\phi^n$ tends to $0$ as $n\to \infty$. 
  
  Each branch $A$ of $\phi^n$ is mapped by $\phi^n$ diffeomorphically onto $J$.
  Assuming that $\partial A \cap \partial J = \emptyset$, there is a point $x \in A$
  such that $\phi^n(x) = x$. Thus all but at most two branches of $\phi^n$ contain a periodic
  point for $f$. It follows that periodic points are dense in $J$.
  
  For $x \in J$, each branch of $\phi^n$ contains a preimage of $x$, so the preimages are
  dense in $J$. Further, intervals $A \subset I \setminus J$ such that $f^n \colon A \to J$ is a
  diffeomorphism are dense in $I$ by expansion outside $J$ (see \rlem{MisEst}),
  so preimages of $x$ are dense in $I$.
\end{proof}

Let $\Lambda$ be a closed $f_0$-forward-invariant subset of $I$ such that $0\notin \Lambda$. 
We introduce the continuation of points in $\Lambda$ (see \cite[Lemma~3.1]{SanMMR}). 
\begin{lem}
There exist  an integer $N \geq 1$ and numbers $\rho, t_0, C >0$ such that the following holds for all $t \in [0,t_0]$. 
Given $x \in \Lambda$, there is a
unique point $x_t$ which satisfies $|x_t - x| \leq Ct$ and $|f_t^{Nj}(x_t) - f_0^{Nj}(x)| < \rho$ for all $j \geq 0$. 
The map $t \mapsto x_t$ is continuous.  
\end{lem}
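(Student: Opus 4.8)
The plan is to obtain $x_t$ as a shadowing orbit for a fixed iterate $g_t := f_t^N$, exploiting that $\Lambda$ is uniformly expanding for $g_0 = f_0^N$ and that $g_t$ is $\cC^1$-close to $g_0$ near $\Lambda$. First I record the expansion. Since $\Lambda$ is compact and $0\notin\Lambda$, $\dist(0,\Lambda)>0$, so fix a neighbourhood $U$ of $0$ with $\overline U\cap\Lambda=\emptyset$ and small enough for \rlem{MisEst}. For $x\in\Lambda$, forward-invariance gives $f_0^j(x)\in\Lambda\subset I\setminus U$ for all $j$, so $|Df_0^k(x)|\ge C\lambda^k$ for all $k\ge 0$ and all $x\in\Lambda$. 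Fix $N$ with $C\lambda^N\ge 16$, so $|Dg_0|\ge 16$ on $\Lambda$. As $(x,t)\mapsto f_t(x)$ is $\cC^2$, the maps $(x,t)\mapsto g_t(x)$ and $(x,t)\mapsto Dg_t(x)$ are continuous, and $\partial_t g_t$ is bounded on compacts; hence by uniform continuity I can fix $\rho_0>0$ and $t_0>0$ small so that, with $\Lambda^{(\rho_0)}$ the $\rho_0$-neighbourhood of $\Lambda$ in $I$, one has $|Dg_t|\ge 4$ on $\Lambda^{(\rho_0)}$ and $\sup_{\Lambda^{(\rho_0)}}|g_t-g_0|\le C_1 t$ for all $t\in[0,t_0]$, with $C_1$ uniform.

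Next I set up inverse branches and a contraction. For $y\in\Lambda$ and $t\in[0,t_0]$, $g_t$ restricted to $B(y,\rho_0)$ is a diffeomorphism onto an interval containing $B(g_t(y),4\rho_0)$, hence — after fixing a suitable $\rho\in(0,\rho_0]$ and shrinking $t_0$ — containing $B(g_0(y),\rho)$; let $h_{t,y}\colon B(g_0(y),\rho)\to B(y,\rho_0)$ be the corresponding inverse branch. Then $|Dh_{t,y}|\le \tfrac14$, and using $|g_t(y)-g_0(y)|\le C_1 t$ one gets $h_{t,y}\bigl(B(g_0(y),\rho)\bigr)\subset B(y,\rho)$ once $t_0$ is small. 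Now fix $x\in\Lambda$, put $x_j=f_0^{Nj}(x)=g_0^j(x)\in\Lambda$, and on the complete metric space $Y_x=\{(y_j)_{j\ge 0}:y_j\in\overline{B(x_j,\rho)}\}$ with the sup-metric define $(T_t\mathbf y)_j=h_{t,x_j}(y_{j+1})$. By the above $T_t$ maps $Y_x$ into itself and is a $\tfrac14$-contraction, so it has a unique fixed point $\mathbf y^{(t)}=(y_j^{(t)})$; since $h_{t,x_j}$ inverts $g_t$ on $B(x_j,\rho_0)$, the identity $y_j^{(t)}=h_{t,x_j}(y_{j+1}^{(t)})$ forces $g_t(y_j^{(t)})=y_{j+1}^{(t)}$, so $x_t:=y_0^{(t)}$ satisfies $f_t^{Nj}(x_t)=y_j^{(t)}\in B(x_j,\rho)$, i.e.\ $|f_t^{Nj}(x_t)-f_0^{Nj}(x)|<\rho$ for all $j$.

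For the displacement bound, comparing $y_j^{(t)}=h_{t,x_j}(y_{j+1}^{(t)})$ with $x_j=h_{0,x_j}(x_{j+1})$ and estimating $|h_{t,x_j}(x_{j+1})-x_j|\le \tfrac14 C_1 t$ (the $g_t$- and $g_0$-preimages of $x_{j+1}$ near $x_j$ differ by this much) gives $|y_j^{(t)}-x_j|\le \tfrac14|y_{j+1}^{(t)}-x_{j+1}|+\tfrac14 C_1 t$; iterating and using boundedness of the sequence, the geometric series sums to $|x_t-x|\le\tfrac13 C_1 t=:Ct$. Uniqueness: if $x_t'$ also satisfies $|f_t^{Nj}(x_t')-f_0^{Nj}(x)|<\rho$ for all $j$, then $(f_t^{Nj}(x_t'))_j\in Y_x$ and, since $g_t$ maps each term to the next within the relevant inverse-branch domain, it is a fixed point of $T_t$, hence equals $\mathbf y^{(t)}$ and $x_t'=x_t$. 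Continuity of $t\mapsto x_t$ follows from the standard parametric-contraction estimate $d(\mathbf y^{(t)},\mathbf y^{(t')})\le \tfrac43\sup_{\mathbf y}d(T_t\mathbf y,T_{t'}\mathbf y)$ and the continuous dependence of each $h_{t,x_j}$ on $t$.

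The delicate point is not any single estimate but the order in which constants are chosen: $N$ first (so $g_0$ expands by a large factor on $\Lambda$), then $\rho_0$ (so $g_t$ still expands by at least $4$ on $\Lambda^{(\rho_0)}$ for $t$ small), then the shadowing radius $\rho$, then $t_0$ — arranged so that all perturbed inverse branches $h_{t,x_j}$ are simultaneously defined on fixed-size domains, contract with a common ratio, and map those domains back into themselves. The only quantitative subtlety beyond this bookkeeping is getting the \emph{linear} bound $|x_t-x|\le Ct$: the crude Banach estimate only yields $O(\rho)$, and one genuinely needs to sum the geometric series produced by the contraction, using the $O(t)$-closeness of $g_t$ to $g_0$ at each step.
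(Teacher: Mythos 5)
Your proof is correct and follows essentially the same route as the paper: both arguments exploit uniform expansion of $g_t=f_t^N$ on a neighbourhood of $\Lambda$ and obtain $x_t$ by pulling the $g_0$-orbit of $x$ back along inverse branches of $g_t$, with the linear bound $|x_t-x|\leq Ct$ coming from summing the geometric series of $O(t)$ one-step errors. The only difference is packaging: the paper constructs finite-time exact pullbacks $y_n$ with $g_t^n(y_n)=g_0^n(x)$ and shows they form a Cauchy sequence, whereas you realise the whole shadowing orbit at once as the fixed point of a contraction on sequence space.
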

\begin{proof}
    By \rlem{MisEstF}, for some $N\geq 1$ and $\rho >0$,
    $g_t := f_t^N$ satisfies $|Dg_t| > 2$ on a $\rho$-neighbourhood $B(\Lambda,\rho)$ of $\Lambda$ for all $t \in [0,t_1]$, for some $t_1>0$.
    Recall that $f_t(x)$ is a $\cC^2$ function of $(x,t)$.
    Choose $C>1$ such that $|g_t(x) - g_0(x)| \leq Ct$ for all $x$ and all $t \in [0,t_1]$. 

    We now apply the Implicit Function Theorem.
    There exists $t_0 \in (0,\min(t_1, \frac{\rho}C))$ such that,
    for each $t\in [0,t_0]$ 
     and $x \in \Lambda$, 
    there is a unique $y$ in the same connected component of $B(\Lambda, \rho)$ as $x$ which satisfies
    $g_0(x) = g_t(y)$.
    Moreover,
    \[
        |x-y|
        \leq \frac{1}{2} \bigl| g_t(x) - g_t(y) \bigr|
        = \frac{1}{2} \bigl| g_t(x) - g_0(x) \bigr|
        \leq \frac{Ct}{2}
        < \frac{\rho}{2}
        .
    \]
    
    Fix such $x, t$ and $y$. 
    If $z \in B(g_0(x), Ct)$, there is a unique $y'$ in the same connected
    component as $x$ of $B(\Lambda, \rho)$ with $z= g_t(y')$
    and $y'$ satisfies
    \[
        |y' - x|
        \leq |y' - y| + |y - x|
        \leq \frac12 | g_t(y') - g_t(y) | + Ct/2
        \leq Ct
        .
    \]
    Inductively, for $n\geq 0$ we obtain points $y_n = y_n(t)$ such that
    $g_t^n(y_n) = g_0^n(x)$ and $|g_t^j(y_n) - g_0^j(x)| \leq Ct$ for
    $j=0,\ldots,n$ and $y_{n+1} \subset B(y_{n}, 2^{-n})$.

    In particular, $(y_n)_n$ is a Cauchy sequence whose limit we denote by $x_t$.
    The point $x_t$  satisfies $|x_t - x| \leq Ct$ and $|g_t^j(x_t) - g_0^j(x)| < \rho$ for all $j \geq 0$.
    Continuous dependence of $x_t$ on $t$ follows from continous dependence of $y_n(t)$ on $t$. 
\end{proof}

In particular, for each $x \in \Lambda$, we obtain a map $t \mapsto x_t$ with the same Lipschitz constant $C$. 
Combining them generates a map $t \mapsto \Lambda_t$. 
Note that if $x$ is preperiodic for $f_0$, then from uniqueness it follows that $x_t$ is preperiodic for $f_t$.

\begin{dfn}[Continuation] \label{defcont}
    The map $t \mapsto x_t$ as above (or the point $x_t$) is called the \emph{continuation} of $x = x_0$.
    $\Lambda_t$ is called the \emph{continuation} of $\Lambda = \Lambda_0$. 
\end{dfn}

\begin{lem} \label{lemUt0}
  Let  $\theta \in (0,\theta_0)$. For sufficiently small $t$, there exist
  open intervals $U_0$, $U_1$,  such that
  \begin{enumerate}[label=(\alph*)]
    \item $0 \in  U_1 \subset U_0 \subset (-\theta,\theta)$;
    \item for each $j$, the boundary $\partial U_j$ varies continuously with $t$,
      and $f_t(\partial U_j)$ is a single point, preperiodic  with respect to $f_t$;
    \item\label{lemUt0:bdd}
          $f_t^k(\partial U_j) \notin U_0$ for all $k \geq 1$ and $j=0,1$;
    \item $|U_1| \leq \theta \dist(U_1, \partial U_0)$. 
  \end{enumerate}
\end{lem}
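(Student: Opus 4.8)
The plan is to construct $U_0$ and $U_1$ directly as pullbacks of the regularly returning interval $J$ from \rlem{RegRet}, using the continuation machinery of \rdef{defcont} to track preperiodic boundary points as $t$ varies. Recall from \rlem{RegRet} that for $f_0$ the interval $J \subset (-\theta_0,\theta_0)$ is regularly returning with $f_0(\partial J)$ a single periodic point, and by \rlem{Prep} preimages of any point are dense in $I$. The first step is to fix a periodic point $p$ of $f_0$ with $p = f_0(\partial J)$, and note that $\Lambda := \overline{\Orb(p)}$ is a closed $f_0$-forward-invariant hyperbolic repelling set with $0 \notin \Lambda$ (it is a finite periodic orbit), so the continuation lemma applies to it.

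First I would choose, inside $(-\theta,\theta)$, a small symmetric-ish interval $U_1^{(0)} \ni 0$ whose two endpoints are each a preimage of $p$ under some iterate of $f_0$ —- this is possible by density of preimages (\rlem{Prep}) and we may shrink to guarantee $|U_1^{(0)}| \le \frac{\theta}{2}\dist(U_1^{(0)}, \partial U_0^{(0)})$ once $U_0^{(0)}$ is fixed. Then I would choose $U_0^{(0)} \ni 0$ with $U_1^{(0)} \subset U_0^{(0)} \subset (-\theta,\theta)$, again with both endpoints preimages of $p$ (pick them strictly between $\partial U_1^{(0)}$ and $\pm\theta$, using density again, and then shrink $U_1^{(0)}$ if needed to get (d)). Each endpoint $q$ of $U_0^{(0)}, U_1^{(0)}$ satisfies $f_0^{k_q}(q) = p$ for some $k_q \ge 1$, and $p$ is periodic, so $q$ is preperiodic; moreover by the remark following the continuation lemma the continuation $q_t$ is preperiodic for $f_t$, and $f_t(\partial(U_j)_t)$ —- where $(U_j)_t$ has endpoints the continuations of the endpoints of $U_j^{(0)}$ —- will be a single point once we verify the two continuations of the two endpoints of a given $U_j^{(0)}$ have the same image; this follows because for $f_0$ both endpoints of $J$ map to $p$, the pullback structure is symmetric through $0$, and uniqueness of the continuation forces the same point. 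To be safe I would instead take $U_j^{(0)}$ to be a pullback of $J$ itself: let $U_1^{(0)}$ be the component of $f_0^{-N_1}(J)$ containing $0$ for a suitable large $N_1$, and $U_0^{(0)}$ the component of $f_0^{-N_0}(J)$ containing $0$ with $N_0 < N_1$ chosen so that nesting and (a),(d) hold. Then $\partial U_j^{(0)} \subset f_0^{-(N_j-1)}$ of the two preimages whose $f_0$-image is $p$, so $f_0(\partial U_j^{(0)})$ really is the single point $f_0^{N_j}(\partial U_j^{(0)}) \in \partial J$... more carefully, $f_0^{N_j - 1}(\partial U_j^{(0)}) = \partial J$ and $f_0(\partial J) = p$, so (b) for $t=0$ holds with $f_0(\partial U_j^{(0)})$ a single preperiodic point only if $N_j \ge 2$; taking $f_0(\partial U_j^{(0)})$ literally means we want the two boundary points of $U_j^{(0)}$ to have equal $f_0$-image, which holds when $U_j^{(0)}$ is a pullback containing the critical point $0$, since $f_0$ is even-like near $0$ (unimodal, $f_0'(0)=0$): the two branches of $f_0^{-1}$ meeting at $f_0(0)$ identify the two sides.

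Now define $(U_j)_t$ for small $t$ to be the interval whose endpoints are the continuations under $f_t$ of the endpoints of $U_j^{(0)}$. Continuity of $t \mapsto \partial(U_j)_t$ is exactly continuity of the continuation map, giving (b) together with preperiodicity. For (a): $0 \in (U_1)_t$ and $(U_1)_t \subset (U_0)_t \subset (-\theta,\theta)$ for $t$ small, by continuity, since the inclusions are strict at $t=0$ and $0$ is in the interior. For (d): $|(U_1)_t| \le \theta \dist((U_1)_t, \partial (U_0)_t)$ for $t$ small, again by continuity from the strict inequality at $t=0$. The one genuinely non-formal point is \ref{lemUt0:bdd}: we need $f_t^k(\partial (U_j)_t) \notin U_0$ (meaning $(U_0)_t$) for all $k \ge 1$. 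This is where I expect the main obstacle. For $f_0$ it holds because $f_0^k(\partial U_j^{(0)})$ eventually lands on and then cycles around the periodic orbit $\Orb(p)$, which lies on $\partial J$ or outside $J \supset U_0^{(0)}$ —- indeed $p \notin J$ is guaranteed by $J$ being disjoint from the post-critical orbit and regularly returning (so $f_0^k(\partial J) \notin J$), hence the whole forward orbit of $\partial U_j^{(0)}$, after finitely many steps, equals the periodic orbit of $p$ which avoids $\overline{J} \supset \overline{U_0^{(0)}}$; and the finitely many pre-periodic-transient iterates $f_0^k(\partial U_j^{(0)})$ for $1 \le k < N_j$ —- I must choose $U_0^{(0)}$ small enough (equivalently $N_0$ large) that none of these finitely many points lands in $U_0^{(0)}$, which is possible since they are finitely many points all different from $0$ (as $f_0$ is Misiurewicz, $0$ is not in the post-critical orbit and not periodic) so shrinking $U_0^{(0)}$ around $0$ excludes them. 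To pass from $f_0$ to $f_t$: the finitely many transient iterates move continuously with $t$ (by $\cC^2$ dependence and finiteness), so they stay outside $(U_0)_t$ for $t$ small; and the tail lands on $\Orb(p_t)$, the continuation of the periodic orbit, which stays within $Ct$ of $\Orb(p)$ by the continuation estimate and hence outside $(U_0)_t$ for $t$ small since $\dist(\Orb(p), \overline{U_0^{(0)}}) > 0$. Assembling: choose first $\theta' < \theta$, then $J$, then $N_0$ large so the transient iterates avoid a fixed neighbourhood of $0$ and the periodic orbit is bounded away, then $N_1 > N_0$ for nesting and (d), then finally $t_0 > 0$ small enough that all the "by continuity" claims hold simultaneously. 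I would present this as: fix the data for $t = 0$ (three displayed lines verifying (a)–(d) and (b),(c) with the specific finite orbit), then invoke continuity of continuations and of finitely many forward iterates to transfer everything to small $t > 0$.
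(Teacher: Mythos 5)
Your overall strategy — build $U_0,U_1$ at $t=0$ with preperiodic boundary points derived from the interval $J$ of \rlem{RegRet}, then transport everything to small $t>0$ via continuations and continuity — is the same as the paper's. The difference, and the source of a genuine gap, is your choice of the intervals themselves. You take both $U_0^{(0)}$ and $U_1^{(0)}$ to be deep pullbacks of $J$ containing $0$ (components of $f_0^{-N_j}(J)$), and you must then verify \ref{lemUt0:bdd} for the \emph{intermediate} iterates $f_0^k(\partial U_j^{(0)})$, $1\le k<N_j$. Your justification — ``they are finitely many points all different from $0$, so shrinking $U_0^{(0)}$ around $0$ excludes them'' — is circular: the set of points you need to avoid depends on $N_0$ and $N_1$, and shrinking $U_0^{(0)}$ means increasing $N_0$, which forces a larger $N_1$, which produces a new (and a priori different) set of transient iterates. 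Nothing in your argument shows that these iterates stay a \emph{uniform} distance from $0$ as $N_0,N_1\to\infty$, which is what the quantifier order requires. The claim is in fact true, but it needs an argument: $f_0^k(\partial U_j^{(0)})\subset \overline{f_0^k(U_j^{(0)})}$, and $f_0^k(U_j^{(0)})$ contains the post-critical point $f_0^k(0)\in\Lambda_0$ while having diameter exponentially small in $N_j-k$ by \rlem{MisEst}; this keeps the iterates outside a fixed neighbourhood of $0$ except for a bounded number of values of $k$ near $N_j$, and those remaining cases can be settled using the nested-or-disjoint property of pullbacks of the regularly returning interval $J$ (\rlem{Nice}), since a pullback of $J$ containing $0$ in its closure would force a post-critical point into $J$.

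The paper sidesteps all of this by a cleverer choice: it takes $U_0=J$ itself (so \ref{lemUt0:bdd} for $j=0$ is the regularly returning property plus the fact that $f_0(\partial J)$ is a periodic point whose orbit avoids $J$), and takes $\partial U_1$ to be a boundary point $p$ of a \emph{branch of the first return map} to $J$ together with its symmetric point $p_*$. By the definition of first return, the iterates of $p$ up to the return time avoid $U_0$, and the return itself lands on $\partial U_0$, whose forward orbit never enters $U_0$; so condition \ref{lemUt0:bdd} holds by construction with no quantifier juggling. If you either adopt that choice of $U_1$ (and $U_0=J$), or supply the uniform-distance argument sketched above for your pullback construction, your proof closes; as written, the verification of \ref{lemUt0:bdd} does not.
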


\begin{proof}
  Suppose first that $t = 0$.
  Let $J \subset (-\theta/2,\theta/2)$ be given by \rlem{Prep}, and set $U_0 = J$.
  Recall that $f_0(\partial U_0)$ is a single periodic point whose 
  orbit under $f_0$ is disjoint from $U_0$.

  Let $F \colon U_0 \to U_0$ denote the first return map to $U_0$
  under $f_0$. Branches of $F$ accumulate on $0$,
  since $0$ never returns, and boundary points of branches get
  mapped by the corresponding iterate of $f_0$ to $\partial U_0$. 
  Hence there are preperiodic points, arbitrarily close to $0$,
  which never  return to $U_0$. Choose one, $p<0$, such that $p$ and
  its symmetric point $p_*$ (in the sense $f_0(p)=f_0(p_*)$) lie in $U_0$
  and such that
  $$
    |p_*-p| < \theta \dist\bigl((p,p_*), \partial U_0\bigr)/2,
  $$
  and set $U_1 = (p,p_*)$. 

  The boundaries of $U_j$, $j=0,1$, consist of preperiodic points whose
  forward orbits do not include $0$, hence they admit continuations,
  giving the sets $U_j$ with the required properties for small enough $t$. 
\end{proof}

\begin{lem} \label{lemER}
  Let $U_j$ denote the intervals from \rlem{Ut0}.
  Let
    \begin{align*}
      E_n & = \{x \in I \colon f_t^k(x) \notin U_1 \, \text{ for all } \, k = 0,1,2,\ldots, n\}, \\
      R_n & = \{x \in I \colon f_t^k(x) \notin U_1 \, \text{ for all } \, k = 1,2,\ldots, n\}.
    \end{align*}
    For $t$ small enough, there are constants $\alpha, C>0$ such that
    $$
      m(E_n) < C e^{-\alpha n}
      \qquad \text{and} \qquad
      m(R_n) < C e^{-\alpha n}
    $$
    for all $n \geq 0$.
\end{lem}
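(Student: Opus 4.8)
The plan is to reduce both estimates to a single exponential escape bound, $m(E_n)\lll e^{-\alpha n}$. For the reduction, note that $E_n\subseteq R_n$, and more precisely $R_n=E_n\sqcup\bigl(U_1\cap f_t^{-1}(E_{n-1})\bigr)$. Since the critical point is non-degenerate, the push-forward $(f_t)_*m$ is absolutely continuous with respect to $m$ with density $\lll\dist(\cdot,f_t(0))^{-1/2}$, the $L^1$-norm being uniform for small $t$; hence $m\bigl(f_t^{-1}(A)\bigr)\lll m(A)^{1/2}$ for every Borel $A$, so $m(R_n)\lll m(E_n)+m(E_{n-1})^{1/2}$ and the bound on $R_n$ follows from the one on $E_n$ (after shrinking $\alpha$).

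To bound $m(E_n)$ I would use three inputs about $f_t$ acting on intervals disjoint (along their orbit) from $U_1$, all uniform for small $t$: (i) \emph{expansion}: by \rlem{MisEstF}, as $U_1$ is a neighbourhood of $0$ and $|Df_t|$ is bounded away from $0$ on $I\setminus U_1$, there are $C_1>0$ and $\lambda>1$ with $|Df_t^k|\geq C_1\lambda^k$ on any interval $W$ satisfying $f_t^j(W)\cap U_1=\emptyset$ for $0\leq j<k$; in particular $f_t^k|_W$ is then a diffeomorphism with $|f_t^k(W)|\geq C_1\lambda^k|W|$; (ii) \emph{distortion}: by \rlem{DistExp} with $U=U_1$, such $f_t^k|_W$ has distortion bounded by a uniform constant $D$; (iii) \emph{covering}: any subinterval of $I=[-1,1]$ of length exceeding $1+\theta$ contains $[-\theta,\theta]$, hence $\overline{U_1}$. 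Inputs (i) and (iii) already give a structural constraint: no interval of length at least a fixed $\delta_*>0$ can avoid $U_1$ at times $0,1,\dots,N_*$, for a suitable fixed $N_*$, since otherwise its image at time $N_*$ would be longer than $1+\theta$ and so contain $U_1$. Consequently every connected component $W$ of $E_n$ has $|f_t^{n-N_*}(W)|<\delta_*$ and therefore $|W|\lll\lambda^{-n}$.

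The size bound is not sufficient by itself: $E_n$ is a finite intersection of pullbacks of $I\setminus U_1$, not a union of full pullbacks, and can have of order $\lambda^n$ components. What I would really aim to prove is a \emph{covering-with-a-fraction} statement --- there exist $N_*\in\N$ and $\beta_*\in(0,1)$, uniform for small $t$, such that every interval $Z$ with $|Z|\geq\delta_*$ satisfies
\[
  m\bigl(\{z\in Z:\ f_t^j(z)\in U_1\ \text{for some}\ 0\leq j\leq N_*\}\bigr)\ \geq\ \beta_*|Z|.
\]
Granting this, I would decompose $E_n$ into components, and on each component $W$ transport the statement through $f_t^n|_W$ (a diffeomorphism of distortion $\leq D$ onto an interval $Z_W\subseteq I\setminus U_1$), applying it at the successive times at which the forward image of $W$ is at scale $\delta_*$ --- by the structural constraint these times are never more than $O(N_*)$ apart, and by expansion the image grows back to scale $\delta_*$ after each partial loss --- so that a definite proportion of each component is removed roughly every $N_*$ iterates; a multiplicative bookkeeping then yields $m(E_n)\leq C(1-\beta_*')^{\lfloor n/N_*'\rfloor}\lll e^{-\alpha n}$.

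The main obstacle I foresee is exactly the covering-with-a-fraction statement together with this bookkeeping. In the former, the awkward case is when an iterate of $Z$ meets $U_1$ without containing it (it "pokes in"): then only a small, a priori uncontrolled, fraction of $Z$ enters $U_1$ at that instant, and one must continue iterating the escaped complementary piece --- an interval adjacent to $U_1$, still missing $U_1$, whose length remains comparable to $\delta_*$ up to fixed multiplicative losses --- and argue that a definite fraction is captured after a bounded number of rounds. A secondary difficulty is that, since $f_t$ for $t\neq0$ need not be Misiurewicz, the clean first-return map of \rlem{RegRet} is unavailable, so all expansion and distortion control has to be drawn from \rlem{MisEstF} and \rlem{DistExp}; this is legitimate precisely because the intervals $U_0,U_1$ of \rlem{Ut0} are regularly returning for $f_t$ with boundary orbits avoiding $U_0$. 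For the base map $f_0$ alone one could instead bootstrap from the exponentially small return-time tails of the nice return map of \rlem{RegRet}, but the perturbative version still needs the uniform estimates above.
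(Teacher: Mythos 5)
Your framework is close to the paper's, and you have correctly isolated the hard point: the ``poking in'' case of your covering-with-a-fraction claim. But you have not resolved it, and this is a genuine gap --- the proposal as written does not prove the lemma. The paper resolves it not by proving a covering lemma for arbitrary intervals of length $\geq \delta_*$, but by exploiting the special structure of the intervals $f_t^n(A)$ where $A$ is a component of $E_n$. The endpoints of such an interval lie in the forward orbit of $\partial U_1\cup\partial I$: any boundary point of $A$ not in $\partial I$ maps into $\partial U_1$ at some time $j\le n$, and by \rlem{Ut0}\ref{lemUt0:bdd} the forward orbit of $\partial U_1$ never enters $U_0$ (nor does $\partial I$). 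Two consequences follow immediately. First, since this forward orbit is a fixed finite set and $f_t^n|_A$ is injective, the two endpoints of $f_t^n(A)$ are distinct members of that set, giving $|f_t^n(A)|\ge\kappa_1>0$ with $\kappa_1$ uniform in $A$, $n$, $t$ --- no expansion argument or ``covering of $I$'' is needed for this lower bound. Second, and this is the step you are missing, the endpoints of $f_t^{n+k}(A)$ remain outside $U_0$ for every $k\ge0$, so this interval either misses $U_0$ or \emph{contains} $U_0$ --- the ``poke in'' configuration cannot occur. By the expansion of \rlem{MisEstF} there is a uniform $N$ with $U_0\subset f_t^{n+k}(A)$ for some minimal $k\le N$; then $f_t^{n+k}$ maps a subinterval of $A$ diffeomorphically onto $U_1$, the orbit of $A$ stays outside a fixed subneighbourhood of $0$ inside $U_1$ up to time $n+k-1$ so \rlem{DistExp} gives a distortion constant $C'$, and hence $m(A\setminus E_{n+k})/m(A)\ge C'^{-1}|U_1|/|I|$. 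Summing over components gives $m(E_{n+N})\le\gamma m(E_n)$ for some uniform $\gamma<1$, which is the whole argument. Your size estimate $|W|\lll\lambda^{-n}$ on individual components, which you correctly note is insufficient, is simply not used. Your reduction of $R_n$ to $E_{n-1}$ via the quadratic singularity of $f_t$ is fine and matches the paper (the paper observes $f_t(R_n)\subset E_{n-1}$ and $m(R_n)\lll\sqrt{m(E_{n-1})}$).
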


\begin{proof}
  Choose a neighbourhood of $0$ contained in $U_1$ for all small $t$ and obtain
  a distortion bound $C'>1$ from \rlem{DistExp}. 
  Let us drop the dependence on $t$ from notation, where appropriate.

  Note that $E_n$ is a finite union of closed intervals
  and $E_{n+1} \subset E_{n}$. 
  Let $A$ be a connected component of $E_n$. Then $f^n$ is monotone on $A$ and
  the boundary points of the interval $f^n(A)$ are distinct elements of the preperiodic forward
  orbit of $\partial U_1$. Therefore, $|f^n(A)| > \kappa_1$, where $\kappa_1>0$
  is  independent of $A$, $n$ and $t$ (for $t$ small enough).
  Hence there exists a number $N$ (independent of $A$, $n$ and $t$) such that
  $f^{n+k} (A) \cap U_0 \neq \emptyset$ for some (minimal) $k \leq N$. In fact,
  $U_0 \subset f^{n+k} (A)$, because the boundary points of $f^n(A)$
  never return to $U_0$ under iteration of $f$. Also,
  $f^{n+k} \colon A \to f^{n+k}(A)$ is a diffeomorphism and
  $A \setminus E_{n+k}$ is a subinterval of $A$ such that $f^{n+k}(A \setminus E_{n+k}) = U_1$.
  The distortion of $f^{n+k}$ is bounded by $C'$ on $A$, by \rlem{DistExp}. Consequently
  $$
  \frac{m(A \setminus E_{n+k})}{m(A)} \geq C'^{-1} \frac{|U_1|}{|I|}.
  $$
  
  Hence there exists $\gamma \in (0,1)$, independent of $A,n,t$, for which
  \[
      m(A \cap E_{n+N}) \leq m(A \cap E_{n+k}) \leq \gamma m(A)
    .
  \]
  Summing over all connected
  components of $E_n$, we obtain $m(E_{n+N}) \leq \gamma m(E_n)$.
  The result for $m(E_n)$ follows by induction. 
  Since $f(R_n) \subset E_{n-1}$ and $f$ has a quadratic critical point, $m(R_n) \lll \sqrt{m(E_{n-1})}$, so
  we also obtain the result for $m(R_n)$.
\end{proof}

Denote $f_t^{n+1}(0)$ by $\xi_n(t)$. The proof of the following lemma is
based on \cite{Tsu}; the ideas go back at least to \cite{BenCar}.

\begin{lem} \label{lemGamman}
  If $\{f_t\}$ is transveral, there exist $r_0>0$, $m_0 \geq 1$ 
  and a sequence of positive numbers $\gamma_n, n \geq m_0$ with 
  \begin{enumerate}[label=(\alph*)]
    \item \label{lemgn-a} 
      $\gamma_n/\gamma_{n+1} \mc 1, \quad \lim_{n \to \infty} \gamma_n = 0$; 
    \item \label{lemgn-b}
      $\gamma_n^{-1} \mc |D\xi_n(0)| \mc |Df_0^n(f_0(0))|;$
    \item \label{lemgn-c}
      $|\xi_n(\gamma_n) - \xi_n(0)| \geq r_0;$
    \item \label{lemgn-d}
      for all $m_0 \leq k \leq n$, the map $\xi_k$ is monotone on
      $[0,\gamma_n]$ and has a distortion bound
      \[
        \displaystyle 
        \Bigl|\log \frac{|D \xi_k(s)|}{|D \xi_k(t)|} \Bigr|
        \leq 1
        \quad \text{for all } \ s,t \in [0,\gamma_n]
        ;
      \]
    \item \label{lemgn-e}
      \[
        \displaystyle 
        \Bigl|\log \frac{|Df_0^n(f_0(0))|}{|Df_t^n(f_t(0))|}
        \Bigr| \leq 1
        \quad \text{for all } \ t \in [0,\gamma_n]
        .
      \]
  \end{enumerate}
\end{lem}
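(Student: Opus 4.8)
The starting point is transversality. Let $B = \sum_{j\ge 0} \partial_t f_t(f_0^j(0))|_{t=0} / (f_0^j)'(f_0(0)) \ne 0$. The standard computation (going back to Benedicks--Carleson, as cited, and used by Tsujii) is that
\[
  D\xi_n(0) = \partial_t \bigl(f_t^{n+1}(0)\bigr)\bigr|_{t=0}
  = Df_0^n(f_0(0)) \sum_{j=0}^{n} \frac{\partial_t f_t(f_0^j(0))|_{t=0}}{(f_0^j)'(f_0(0))},
\]
where we read the $j=0$ term with the convention $(f_0^0)' \equiv 1$. By \rlem{MisEst} applied to the post-critical orbit, $|Df_0^j(f_0(0))| \ggg C\lambda^j$, so the tail of the series converges geometrically and the partial sums converge to $B \ne 0$; hence for all large $n$ the partial sum is bounded above and below by positive constants. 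This gives \ref{lemgn-b} once we set $\gamma_n := |D\xi_n(0)|^{-1}$ (up to a bounded factor), and it immediately gives the comparison $\gamma_n^{-1} \mc |Df_0^n(f_0(0))|$. For \ref{lemgn-a}, we use that $D\xi_{n+1}(0) = Df_0(\xi_n(0)) \cdot D\xi_n(0) \cdot (1 + o(1))$ — more precisely the ratio of consecutive partial sums tends to $1$ — and $Df_0$ is bounded above and bounded below away from $0$ on the post-critical orbit (which avoids the critical point, being hyperbolic repelling); together with $\lim |Df_0^n(f_0(0))| = \infty$ this yields $\gamma_n/\gamma_{n+1} \mc 1$ and $\gamma_n \to 0$.

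The substantive part is \ref{lemgn-c}, \ref{lemgn-d} and \ref{lemgn-e}: upgrading the infinitesimal statement at $t=0$ to a statement on the whole interval $[0,\gamma_n]$, whose length is exactly the reciprocal of the derivative. The idea is a bootstrap on $n$. Suppose inductively that on $[0,\gamma_n]$ each $\xi_k$, $k \le n$, is monotone with distortion at most $1$ and that \ref{lemgn-e} holds with bound, say, $1/2$ for the relevant iterates; these hypotheses let us compare $Df_t^k(f_t(0))$ with $Df_0^k(f_0(0))$ uniformly for $t \in [0,\gamma_n]$. The key geometric input is that, as long as the orbit of $\xi_k(t)$ stays a definite distance $\theta_0$ from the critical point $0$ — which the base orbit does, being Misiurewicz, and hence so does the perturbed orbit while $t$ and the accumulated displacement are small — one has the distortion bound of \rlem{DistExp}: $|\log|Df_t^k(x)| - \log|Df_t^k(y)|| \le (\log C)|f_t^k(x) - f_t^k(y)|$. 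Now one estimates the displacement of the post-critical orbit: $|\xi_k(t) - \xi_k(0)| \le \int_0^t |D\xi_k(s)|\,ds \mc t \gamma_k^{-1} \le \gamma_n\gamma_k^{-1} \le O(1)$ for $k \le n$ by \ref{lemgn-a}, and one checks this stays below $\theta_0$ and below $r_0$ by choosing $r_0$ and then $m_0$ appropriately (shrink $r_0$ so the orbit stays far from $0$; this controls both the excursion near the critical point and, via a telescoping/Gronwall argument over the iterates, the accumulation of distortion). Feeding the resulting uniform derivative control back into the formula for $D\xi_k$ closes the induction and also produces \ref{lemgn-c}: since $|D\xi_n(s)| \mc \gamma_n^{-1}$ uniformly on $[0,\gamma_n]$ by the distortion bound \ref{lemgn-d}, integrating gives $|\xi_n(\gamma_n) - \xi_n(0)| \mc \gamma_n \cdot \gamma_n^{-1} \mc 1$, and we take $r_0$ to be a suitable fraction of this comparison constant (consistent with the earlier smallness demands on $r_0$).

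The main obstacle is the circularity in the bootstrap: the distortion bound \ref{lemgn-d} on $[0,\gamma_n]$ is what justifies that $|\xi_k(t)|$ stays away from $0$ on that interval, but one needs the orbit to stay away from $0$ in order to invoke \rlem{DistExp} and get the distortion bound in the first place. Resolving this requires care in the order of quantifiers: one fixes $r_0$ small (forcing the perturbed post-critical orbit to remain within $r_0$ of the Misiurewicz orbit, hence at distance $\ge \theta_0 - r_0$ from $0$), then chooses $m_0$ large enough that for $n \ge m_0$ the total accumulated distortion $\sum_{k} |\xi_k(t) - \xi_k(0)| \lll t\sum_k \gamma_k^{-1} \lll t\gamma_n^{-1} \le 1$ stays under the required threshold — using \ref{lemgn-b} and the geometric growth of $\gamma_k^{-1}$ so the sum is dominated by its last term. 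The Misiurewicz expansion estimate \rlem{MisEst} (resp.\ \rlem{MisEstF}) is exactly what makes all these tails summable and $t$-uniform, which is why the hypothesis is used throughout.
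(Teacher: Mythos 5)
The paper's own proof of \rlem{Gamman} is almost entirely by citation: it sets $\gamma_n := \gamma^{(\mu)}(0,n)$ as defined in Tsujii \cite{Tsu}, quotes \cite[Lemma~5.2]{Tsu} and the conditions $\Gamma 1$, $\Gamma 2$ there, and reads off (a)--(e). Your proposal reconstructs from scratch the Benedicks--Carleson/Tsujii bootstrap that those citations stand for: the transversality formula for $D\xi_n(0)$, the choice $\gamma_n \mc |D\xi_n(0)|^{-1}$, and the inductive comparison of the $f_0$- and $f_t$-orbits of $0$ over $[0,\gamma_n]$ via \rlem{DistExp}. So the underlying mathematics is the same; you are supplying the argument the paper delegates to its reference, and the plan is sound in its main lines, including your correct identification of the open-closed/bootstrap issue.

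One detail needs repair. Near the end you say you choose $m_0$ large so that the accumulated displacement $\sum_k |\xi_k(t)-\xi_k(0)| \lll t\sum_k \gamma_k^{-1} \lll t\gamma_n^{-1}$ falls below the threshold needed for \rlem{DistExp}. Increasing $m_0$ does not achieve this: the geometric sum $\sum_{k\le n}\gamma_k^{-1}$ is dominated by its terms with $k$ near $n$, and for $t$ near $\gamma_n$ the bound $t\gamma_n^{-1}$ is of order $1$ no matter how large $m_0$ is. Moreover (d) asks for distortion at most $1$ exactly, so after multiplying by the Lipschitz constant from \rlem{DistExp} you need the displacement sum to be a specific small number, not merely $O(1)$. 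The correct knob is the implied constant in $\gamma_n \mc |D\xi_n(0)|^{-1}$, which you already leave free (``up to a bounded factor''): take $\gamma_n = c\,|D\xi_n(0)|^{-1}$ with $c$ small enough that the total distortion is $\le 1$, and then fix $r_0$ compatibly --- small enough that (c) survives the reduction of $c$, but also small enough (as you note) that the perturbed orbit $\xi_k(t)$, $k<n$, stays a definite distance from $0$. The role of $m_0$ is the separate one of ensuring the transversality partial sums are bounded away from $0$ and the geometric tail estimates have kicked in.
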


\begin{proof}
    Recall from \rlem{MisEst} that
   $|Df_0^k(f_0(0))| \geq C_0\lambda^k$. 
  We use Tsujii \cite{Tsu} and only treat large $n$.
  From \cite[Equation~3.3]{Tsu},
  \[
    |Df^n_0(f_0(0))|^{-1} \mc a^+(f_0(0),n; 0)
    ,
  \]
  where 
  \(
    a^+(x,n; t) = \Bigl(
    4 e \kappa_1^2
    \sum_{j=0}^{n-1} \frac{|Df_t^j(x)|}{Df_t(f_t^j(x))|}
    \Bigr)^{-1}
  \)
  and $\kappa_1 > 1$.

  We choose $\gamma_n$ equal to $\gamma^{(\mu)}(0,n)$
  in \cite[Section~5]{Tsu}. By \cite[Lemma~5.2]{Tsu} and the
  preceding Remarks with $t=0$,
  \begin{itemize}
    \item $|D\xi_n(0)| \mc |Df_0^n(f_0(0))|$;
    \item \(\gamma_n < |D\xi_n(0)|^{-1}\);
    \item \(\gamma_n \ggg a^+(f_0(0), n; 0)\).
  \end{itemize}
  Hence we obtain~\ref{lemgn-b} which in turn implies~\ref{lemgn-a}.
  
  Bounds~\ref{lemgn-d} and~\ref{lemgn-e} correspond to
  \cite[$\Gamma1$ and $\Gamma2$]{Tsu}.
  Finally,~\ref{lemgn-c} follows from $\gamma_n \mc |D\xi_n(0)|^{-1}$ and~\ref{lemgn-d}.
\end{proof}

\section{First return maps} \label{sectSetUp}

We continue to suppose that $\{f_t\}$ is a \mruf.
Let $\Lambda_0$ be the closure of the post-critical orbit of $f_0$.
Let $\Lambda_t$ be its continuation, see \rdef{cont}.

Where appropriate, 
 we shall suppress the dependence on $t$ from notation for better legibility.

Given the intervals $U_j$, as in \rlem{Ut0}, we denote by
$\phi_j \colon U_j \to U_j$  the first return map
under iteration by $f_t$, and by $\psi_j \colon I \to U_j$
the first entry map.

\begin{lem} \label{lemBigDer}
  There are constants
  $C>1$, $\theta_1 \in (0,\theta_0)$ such that for $\theta \in (0,\theta_1)$, if $U_j$, $j=0,1$, are given by \rlem{Ut0}, if $t$ is small and if $x \in U_j$ with $|x|>Ct$, then
  \[
    |D\phi_j(x)| \geq 1000
    .
  \]
\end{lem}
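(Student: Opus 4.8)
The plan is to write the first return map $\phi_j = f_t^{r}$ near a point $x \in U_j$ as a composition of two pieces: the first step $f_t$, which takes $x$ to $f_t(x)$ near the critical value $\xi_0(t) = f_t(0)$, followed by $f_t^{r-1}$, which carries $f_t(x)$ back to $U_j$ while staying outside $U_1$ (and, for the relevant choice of $\theta$, outside a fixed neighbourhood of $0$ until the final entry). The derivative of $\phi_j$ at $x$ thus factors as $|Df_t(x)| \cdot |Df_t^{r-1}(f_t(x))|$. The first factor is of order $|x|$, hence at least of order $Ct$ up to a constant by hypothesis; the second factor is large by the Misiurewicz-type expansion estimate \rlem{MisEstF}, since the orbit segment avoids $U_1$ and the last step is an entry into $U_1 \subset U_0$. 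The point is that the smallness of $|Df_t(x)| \mc |x|$ near the critical point is more than compensated by the expansion $C\lambda^{r-1}$ accumulated over the return.

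More precisely, first I would fix the auxiliary neighbourhood: choose a small neighbourhood $U$ of $0$ on which \rlem{MisEstF} applies, and take $\theta_1$ small enough that $U_1 \subset U_0 \subset (-\theta_1,\theta_1) \subset U$ for all the intervals produced by \rlem{Ut0} with $\theta < \theta_1$. For $x \in U_j$ with $|x| > Ct$ and return time $r = r(x)$, set $y = f_t(x)$. The orbit $y, f_t(y), \dots, f_t^{r-2}(y)$ lies outside $U_1$, and $f_t^{r-1}(y) \in U_1 \subset U$, so the second bullet of \rlem{MisEstF} gives $|Df_t^{r-1}(y)| \geq C_1 \lambda^{r-1}$ for a uniform $C_1 \in (0,1)$ and $\lambda > 1$, valid for all small $t$. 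Meanwhile $|Df_t(x)| \geq c_0 |x| > c_0 C t$ for a constant $c_0 > 0$ coming from non-degeneracy of the critical point (here $D^2 f_t$ is bounded below near $0$, uniformly in small $t$, since $f_t(x)$ is $\cC^2$ in $(x,t)$ and $f_0''(0) \ne 0$). Combining, $|D\phi_j(x)| \geq c_0 C_1 C t \lambda^{r-1}$.

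To finish I must handle the case of small return time $r$, where $t \lambda^{r-1}$ is not automatically large. The key observation is that there is a lower bound $r \geq r_{\min}(t)$ with $r_{\min}(t) \to \infty$ as $t \to 0$: a point with $|x| > Ct$ that returns to $U_1$ quickly would have to travel from near the critical value back to a $Ct$-neighbourhood of $0$ in a bounded number of steps, but for $t = 0$ the post-critical orbit is bounded away from $0$ (it accumulates on the hyperbolic repelling set $\Lambda_0 \not\ni 0$), and for small $t$ the orbit of $f_t(x)$ shadows that of $f_0(x)$ for a long time by the continuation/shadowing estimates underlying \rlem{MisEstF} and the continuation lemma — so the orbit cannot re-enter a $Ct$-neighbourhood of $0$ until time comparable to $-\log t$. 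Quantitatively, since $|x| > Ct$ forces $|f_t(x) - \xi_0(t)|$ (and hence the initial shadowing error) to be bounded, there is a constant $b > 0$ with $r \geq b \log(1/t)$ for all small $t$; then $t \lambda^{r-1} \geq t \lambda^{b \log(1/t) - 1} = \lambda^{-1} t^{\,1 - b \log \lambda}$, which tends to $\infty$ provided $b$ was chosen (by shrinking $\theta_1$, or rather by choosing $U$ small) so that $b \log \lambda > 1$. Hence $|D\phi_j(x)| \to \infty$ uniformly as $t \to 0$, so in particular it exceeds $1000$ for $t$ small, which is the assertion.

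The main obstacle is the bookkeeping in the last paragraph: making the lower bound $r \geq b\log(1/t)$ precise and uniform, and ensuring the constant $b$ can be taken large enough relative to $\lambda$ — this requires care with the order of quantifiers (first fix $U$ small to get a good $\lambda$, then fix $\theta_1$, then fix $C$, then take $t$ small). One must also be slightly careful that the constant $C$ in the statement is allowed to depend on $\theta$ (equivalently on $U_j$), which is harmless since it only needs to dominate the continuation constant and the inverse of the quadratic lower bound near $0$.
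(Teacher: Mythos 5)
Your factorisation $D\phi_j(x)=Df_t(x)\cdot Df_t^{r-1}(f_t(x))$ is a reasonable starting point, but the argument has a genuine gap: the claim that $r\geq r_{\min}(t)$ with $r_{\min}(t)\to\infty$ (in particular $r\geq b\log(1/t)$) is false. You write that such a point ``would have to travel \dots back to a $Ct$-neighbourhood of $0$'', but the first return is to $U_j$, an interval whose size depends only on $\theta$ and not on $t$. A point $x\in U_1$ with $|x|$ comparable to $|U_1|$ (and certainly $|x|>Ct$ once $t$ is small) returns to $U_1$ in a bounded number of steps, so your lower bound $c_0C_1Ct\,\lambda^{r-1}$ tends to $0$ rather than $\infty$ on the bulk of $U_j$. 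Moreover, even with a correct lower bound on $r$ (one should keep $|x|$ rather than downgrade it to $Ct$, and the best generic bound is $r\gtrsim\log(1/(x^2+t))/\log M$ where $M=\sup|Df_t|$), the estimate $C\lambda^{r-1}$ from \rlem{MisEstF} is too coarse: it yields $|D\phi_j(x)|\gtrsim|x|\,(x^2+t)^{-\log\lambda/\log M}$, which blows up only if $\lambda^2>M$, and nothing guarantees that. Shrinking $U$ or $\theta_1$ does not help, since $\lambda$ is fixed by \rlem{MisEstF} and the divergence rate of nearby orbits is governed by $M$, not by any quantity you control.

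The paper sidesteps both issues by comparing $f_t(x)$ with $y_t$, the continuation of the critical value, for which $|f_t(x)-y_t|\lll x^2+t$, and iterating the interval $W=(f_t(x),y_t)$. One endpoint of $W$ lies in the invariant set $\Lambda_t$, which is at definite distance from $0$; hence $f_t^k(W)$ stays outside $(-\delta_0,\delta_0)$ until $|f_t^k(W)|\geq\delta_0$. The Koebe/bounded-distortion estimate (\rlem{DistExp}) applied on $W$ then gives the sharp bound $|Df_t^n(f_t(x))|\ggg\delta_0/|W|\ggg 1/(x^2+t)$, with no loss to a sub-optimal $\lambda^n$. Combined with $|Df_t(x)|\ggg|x|$, this yields $|D\phi_j(x)|\ggg|x|/(x^2+t)$, from which the lemma follows by taking $C$ large (for the regime $Ct<|x|\leq\sqrt{t}$, where the bound is $\ggg C$) and $\theta_1$ small (for $|x|>\sqrt{t}$, where the bound is $\ggg 1/\theta_1$). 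Keeping $|x|$ in the numerator is essential; the step where you replace $|x|$ by $Ct$ discards exactly the mechanism that covers most of $U_j$.
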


\begin{proof}
  Let $\delta_0 = \frac{1}{4} \dist (\Lambda_0, 0)$.
  Set $y_0 = f_0(0) \in \Lambda_0$ and
  let $y_t$ denote the continuation of $y_0$. 
  Suppose that $x$ is small and $f_t(x) \ne y_t$.
  Then
  \begin{align*}
      |f_t(x) - y_t| 
      & \leq |f_t(x) - f_t(0)| + |f_t(0) - y_0| + |y_0 - y_t| 
      \\ & \lll x^2 + t.
  \end{align*}
  Let $W = (f_t(x), y_t)$ and set
  $$
    n = \inf\{
      k \geq 0 \colon |f_t^{k}(W)| \geq \delta_0
    \}.
  $$
    
  As $y_t$ is in the $f_t$-invariant set $\Lambda_t$, 
  \begin{equation}
    \label{eq:anhho}
    f_t^k(W) \cap (-\delta_0, \delta_0) = \emptyset
  \end{equation}
  for all $0 \leq k < n$.
  By \rlem{MisEstF}, $|Df_t^k|\geq C \lambda^k$ on $W$ for
  some $C' > 0$ and $\lambda > 1$ independent of $x$ and $t$,  for all $0\leq k < n$. 
  Hence $n$ is finite. 
  By \rlem{DistExp}, $f_t^n$ has bounded distortion on $W$,
  independent of $x$ and $t$.
  Hence, 
  \[
    |Df_t^n(f_t(x))| 
    \ggg \frac{1}{|f_t(x)-y_t|}
  \]
  and
  \[
    |Df_t^{n+1}(x)| 
    \ggg \frac{|x|}{|f_t(x) - y_t|}
    \ggg \frac{|x|}{x^2 + t}
    .
  \]
  By \rlem{MisEstF}, there is $C'>0$ such that the first entry map to any
  sufficiently small neighbourhood $U$ of $0$ has derivative at least $C'$.
  Further, if $U \subset (-\delta_0, \delta_0)$, then by~\eqref{eq:anhho},
  the first return time of $x$ to $U$ under $f_t$ is at least $n$.
  Hence if $\phi$ is the first return map to $U$, then, provided that $U$ and $t$ are
  small enough,
  \[
    |D \phi(x)|
    \ggg \frac{|x|}{x^2+t}
    ,
  \]
  with the implied constant independent of $U$ or $t$.

  One can therefore choose $C >1$ and $\theta_1 \in (0,\theta_0)$ so that, if
  $U \subset (-\theta_1, \theta_1)$ and $x \in U$, $|x| \geq Ct$, then  
  $|D\phi(x)| \geq 1000$.

  Now if $\theta \in (0,\theta_1)$ and $U_j$ are given by \rlem{Ut0} with first return maps $\phi_j$, then the above estimates imply that
  $|D\phi_j(x)| \geq 1000$ provided $t$ is small enough, $|x| \geq Ct$ and $x \in U_j$. 
\end{proof}

\begin{lem} \label{lemUt1}
  If $W$ is a branch of $\psi_1$, then there is an open interval 
  $\hat W$, with $W \subset \hat W$, mapped diffeomorphically by
  $f_t^n$ onto $U_0$, where $\psi_1 = f_t^n$ on $W$. 
\end{lem}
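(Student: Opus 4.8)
The plan is to construct $\hat W$ as the pullback of $U_0$ following the itinerary of $W$. Write $n = n_W$, so that $\psi_1 = f_t^n$ on $W$; if $n = 0$ then $W = U_1$ and one may simply take $\hat W = U_0$, so assume $n \geq 1$. Set $\hat W^{(n)} = U_0$, and, descending in $k$ from $n-1$ to $0$, let $\hat W^{(k)}$ be the connected component of $f_t^{-1}(\hat W^{(k+1)})$ containing $f_t^k(W)$; this is legitimate since $f_t\bigl(f_t^k(W)\bigr) = f_t^{k+1}(W) \subseteq \hat W^{(k+1)}$ by the previous step. Put $\hat W := \hat W^{(0)}$. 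Then $\hat W$ is an open interval with $W \subseteq \hat W$, and $f_t(\hat W^{(k)}) \subseteq \hat W^{(k+1)}$ for all $k$, so $f_t^n(\hat W) \subseteq U_0$. What remains is to show that $f_t^n$ is a diffeomorphism of $\hat W$ \emph{onto} $U_0$, and, as $0$ is the only critical point of $f_t$, this comes down to proving that $0 \notin \hat W^{(k)}$ for $0 \leq k \leq n-1$. Granting that, each $f_t \colon \hat W^{(k)} \to \hat W^{(k+1)}$ is monotone; it is surjective because an endpoint $q$ of $\hat W^{(k)}$ lying in $\partial I$ would, after $n-k$ further iterates, lie both in $\partial I$ (since $f_t(\partial I) \subseteq \partial I$) and in $\overline{U_0}$, which is impossible as $U_0 \subset (-\theta,\theta)$; composing these homeomorphisms gives the desired diffeomorphism.

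The heart of the matter is the claim $0 \notin \hat W^{(k)}$, and this is exactly where the special properties of $U_1$ come in. Suppose, for a contradiction, that $0 \in \hat W^{(k)}$ for some $k \in \{0, \dots, n-1\}$. Because $W$ is a branch of the first entry map to $U_1$ with inducing time $n > k$, we have $f_t^k(W) \cap U_1 = \emptyset$, whereas $0 \in U_1$. Hence the interval $\hat W^{(k)}$ meets $U_1$ (it contains $0$) and meets $I \setminus U_1$ (it contains the nonempty interval $f_t^k(W)$), so, being connected, it must contain a point $w \in \partial U_1$. Iterating $f_t(\hat W^{(\ell)}) \subseteq \hat W^{(\ell+1)}$ from $\ell = k$ up to $\ell = n$ yields $f_t^{n-k}(w) \in f_t^{n-k}(\hat W^{(k)}) \subseteq \hat W^{(n)} = U_0$. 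But $n-k \geq 1$, so condition~\ref{lemUt0:bdd} of \rlem{Ut0} forces $f_t^{n-k}(w) \notin U_0$ --- a contradiction. This proves the claim, and with it the lemma.

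I expect the only delicate point to be bookkeeping: one must make sure that each $\hat W^{(k)}$ is a genuine connected component of the relevant preimage, so that a monotone restriction of $f_t$ is automatically onto the next interval --- this is what forces the check about $\partial I$ above --- and one should record that the degenerate configurations, such as $0 \in \partial W$ or $n = 0$, cause no difficulty. It is worth noting that no distortion or expansion estimates enter here; the statement is a purely topological consequence of $U_0$ and $U_1$ being regularly returning with the forward orbits of their boundaries avoiding $U_0$, as set out in \rlem{Ut0}.
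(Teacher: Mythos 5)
Your proof is correct and takes essentially the same route as the paper: where the paper defines $\hat W$ directly as the maximal open interval containing $W$ with $f_t^n(\hat W)\subset U_0$ and then argues that $f_t^j(\hat W)$ avoids $\partial U_1$ (hence $U_1$, hence $0$) for $j<n$, you build the same interval via the pullback chain $\hat W^{(n)},\dots,\hat W^{(0)}$. The core contradiction --- a point of $\partial U_1$ in $\hat W^{(k)}$ would land in $U_0$ after $n-k\geq 1$ iterates, violating \rlem{Ut0}\ref{lemUt0:bdd} --- is exactly the paper's observation, and your handling of surjectivity via $f_t(\partial I)\subset\partial I$ is the content of the paper's terse ``maximality gives surjectivity.''
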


\begin{proof}
  If $W \ni 0$, then $n=0$, $f_t^n$ is the identity map  and the claim is
  trivial indeed.
  Suppose $0 \notin W$. Let $\hat W \supset W$ be the maximal open interval
  with $f_t^n(\hat W) \subset U_0$.
  Since 
  \(
    f_t^k(\partial U_1) \cap U_0 = \emptyset
  \)
  for $k \geq 1$,
  \[
    f_t^j(\hat W) \cap \partial U_1 = \emptyset
    \qquad \text{for all } 0 \leq j < n
    .
  \]
  Since $n$ is the first entry time on $W$,
  \[
    f_t^j(\hat W) \cap U_1 = \emptyset
    \qquad \text{for all } 0 \leq j < n
    .
  \]
  Hence $f_t^n$ has no critical point in a neighbourhood
  of $\overline {\hat W}$, and maximality gives surjectivity. 
\end{proof}

If $\phi_1$ has a critical point, it is unique and equal to $0$.
Otherwise, $\phi_1$ is not defined at $0$. A branch of $\phi_1$
containing $0$ is called \emph{central}. 

\begin{lem} \label{lemUt2}
  Suppose that either $0$ never returns to $U_0$ or the first return of $0$ to $U_0$ lies
  in $U_1$. Let $W$ be a non-central branch of $\phi_1$.
  Then there is an open interval $\hat W$, with $W \subset \hat W \subset U_1$,
  mapped diffeomorphically by $f_t^n$ onto $U_0$, where $\phi_1 = f_t^n$ on $W$.
  In case $\phi_1$ has a central branch, $\hat W$ is disjoint from it.
  On the non-central branches, $|D\phi_1| \geq 5$. 
\end{lem}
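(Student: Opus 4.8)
The plan is as follows. Since $W$ is non-central, $\phi_1 = f_t^n$ on $W$ with $n \ge 1$ and $0 \notin W$, and because $W$ is a branch of the first return map to $U_1$ we have $f_t^j(W) \cap U_1 = \emptyset$ for $1 \le j \le n-1$; as $0 \in U_1$ this gives $0 \notin f_t^j(W)$ for $0 \le j < n$, so $f_t^n$ is monotone on $W$. As $U_1$ is regularly returning (\rlem{Ut0}), $f_t^n(\partial W) \subseteq \partial U_1$, and since $f_t^n(W) \subseteq U_1$ we get $f_t^n(W) = U_1$, hence $\overline{f_t^n(W)} = \overline{U_1} \subseteq U_0$. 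Now let $\hat W$ be the union of all open intervals $V \supseteq W$ on which $f_t^n$ restricts to a diffeomorphism with $f_t^n(V) \subseteq U_0$; this is again such an interval, i.e.\ the \emph{maximal} one. I would then establish, in order, that $\hat W \subseteq U_1$, that $f_t^n(\hat W) = U_0$, that $\hat W$ is disjoint from the central branch when the latter exists, and finally the derivative estimate.

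\smallskip\noindent\emph{Containment $\hat W \subseteq U_1$ and orbit separation.} These rest on a single boundary argument using \rlem{Ut0}(c), that $f_t^k(\partial U_j) \cap U_0 = \emptyset$ for all $k \ge 1$. If $\hat W$ met $\partial U_1$ it would contain a point $q \in \partial U_1$, and then $f_t^n(q) \in f_t^n(\hat W) \subseteq U_0$ would contradict $f_t^n(q) \in f_t^n(\partial U_1)$. Hence $\hat W \subseteq U_1$. Likewise, if $f_t^i(\hat W) \cap U_1 \ne \emptyset$ for some $1 \le i \le n-1$, then since $f_t^i(W) \cap U_1 = \emptyset$ the interval $f_t^i(\hat W)$ would contain a point $x$ with $f_t^i(x) \in \partial U_1$, and $f_t^n(x) = f_t^{n-i}(f_t^i(x)) \notin U_0$ would contradict $f_t^n(\hat W) \subseteq U_0$. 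Thus $f_t^i(\hat W) \cap U_1 = \emptyset$ for $1 \le i \le n-1$; in particular $0 \notin f_t^i(\hat W)$ for these $i$, so a critical point of $f_t^n$ in $\hat W$ could only be $0$ itself, and since $f_t^n|_{\hat W}$ is a diffeomorphism we get $0 \notin \hat W$. Consequently $\hat W \subsetneq U_1$ (as $0 \in U_1$), and $W$ is compactly contained in $\hat W$ since $\overline{f_t^n(W)} = \overline{U_1}\subseteq U_0$.

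\smallskip\noindent\emph{Surjectivity $f_t^n(\hat W) = U_0$.} This is where the hypothesis on the orbit of $0$ is essential, and I expect it to be the main obstacle. Suppose $f_t^n(\hat W) \subsetneq U_0$. By maximality, on each side of $\hat W$ the extension is blocked either because the relevant endpoint of $\hat W$ is $0$ or because $f_t^n$ of it already lies in $\partial U_0$ — no other obstruction is possible, since $f_t^i(\hat W)$ avoids $0 \in U_1$ for $1 \le i \le n-1$, so no later iterate acquires a critical point on a small extension. As $f_t^n(\hat W) \subsetneq U_0$, at least one endpoint of $\hat W$ equals $0$; say $\hat W = (0,b)$, the other case being symmetric. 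Then maximality forces $f_t^n(b) \in \partial U_0$, so $f_t^n(\hat W)$ runs from $f_t^n(0)$ to an endpoint of $U_0$, and $f_t^n(0) \in U_0$ (else $f_t^n(\hat W)=U_0$). Thus $0$ returns to $U_0$ within $n$ steps, and by hypothesis its first return time $k_0 \le n$ satisfies $f_t^{k_0}(0) \in U_1$. But $0 \in \partial\hat W$ together with $f_t^j(\hat W) \cap U_1 = \emptyset$ gives $f_t^j(0) \in f_t^j(\overline{\hat W}) \subseteq I \setminus U_1$ for $1 \le j \le n-1$, forcing $k_0 = n$ and $f_t^n(0) \in U_1$. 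Since $U_1 = f_t^n(W) \subseteq f_t^n(\hat W)$, the point $f_t^n(0)$ would then be an endpoint of $f_t^n(\hat W)$ yet lie in the interior of $U_1 \subseteq f_t^n(\hat W)$ — a contradiction. Hence $f_t^n \colon \hat W \to U_0$ is a diffeomorphism.

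\smallskip\noindent\emph{Disjointness from the central branch and the derivative bound.} If $\phi_1$ has a central branch $W_0 \ni 0$ and $\hat W \cap W_0 \ne \emptyset$, then neither interval contains the other ($W_0 \subseteq \hat W$ is impossible since $0 \notin \hat W$; $\hat W \subseteq W_0$ would give $f_t^n(\hat W) \subseteq f_t^n(W_0) \subseteq U_1 \subsetneq U_0$), so $W_0$ contains an endpoint $a$ of $\hat W$. Then $f_t^n(a) \in \partial U_0$ (the diffeomorphism $f_t^n|_{\hat W}$ carries $\partial\hat W$ into $\partial U_0$), while $a \in W_0$ has some first return time $k_0$ to $U_1$ with $\phi_1(a) = f_t^{k_0}(a) \in U_1$, and $k_0 \ge n$ because $f_t^i(a) \notin U_1$ for $1 \le i \le n-1$. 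But then $f_t^{k_0}(a) \notin U_1$: indeed $f_t^n(a) \in \partial U_0$, which is disjoint from $\overline{U_1}$ (so the case $k_0 = n$ already fails), and whose forward orbit avoids $U_0 \supseteq U_1$ by \rlem{Ut0}(c) (handling $k_0 > n$). This contradicts $\phi_1(a) \in U_1$, so $\hat W \cap W_0 = \emptyset$. Finally I would apply \rlem{ExtExp} to $g = f_t^n$, with $W \to U_1$ extended to $\hat W \to U_0$: the function $|Df_t^n|^{-1/2}$ is convex on $\hat W$ because $f_t$ has non-positive Schwarzian off $0$ and $f_t^j(\hat W)$ avoids $0$ for $0 \le j < n$; each component of $U_0 \setminus U_1$ has length at least $\dist(U_1, \partial U_0) \ge |U_1|/\theta > 10(1+\Delta)|U_1|$ by \rlem{Ut0}(d) and the choice $\theta < \theta_0 < (10(1+\Delta))^{-1}$; and $|\hat W| < |U_1|$ since $\hat W \subsetneq U_1$. \rlem{ExtExp} then gives $|D\phi_1| = |Df_t^n| > 5$ on $W$, completing the proof.
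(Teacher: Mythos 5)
Your proof is correct and follows essentially the same route as the paper: pass to the maximal extension $\hat W$, use \rlem{Ut0}\ref{lemUt0:bdd} to keep the orbit of $\hat W$ away from $\partial U_1$ (hence from $U_1$ and from the critical point), invoke the first-return hypothesis on the orbit of $0$ to show the extension cannot be obstructed at $0$ (your contradiction via $f_t^n(0)$ being simultaneously an endpoint of the open image and an interior point of $U_1 \subset f_t^n(\hat W)$ is a clean variant of the paper's conclusion that $0$ would have to lie in $W$), and finish with \rlem{ExtExp}. The only slip is the parenthetical ruling out $\hat W \subseteq W_0$ via ``$f_t^n(W_0) \subseteq U_1$'', which is unjustified when $n$ is not the return time of the central branch; but the claim is immediate anyway, since $W \subset \hat W \subseteq W_0$ would force the distinct branches $W$ and $W_0$ of $\phi_1$ to intersect.
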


\begin{proof}
  As in the proof of \rlem{Ut1},
  let $\hat W \supset W$ be the maximal open interval with $f_t^n(\hat W) \subset U_0$.
  Then $f_t^j(\hat W) \cap \partial U_1 = \emptyset$ for $0 \leq j < n$,
  in particular, $\hat W \subset U_1$.
  Since $n$ is the first return time on $W$, 
  \begin{equation}
    \label{eq:nss}
    f_t^j(\hat W) \cap U_1 = \emptyset
    \qquad \text{for } 1 \leq j < n
    .
  \end{equation}
  Therefore $0$ is the only possible critical point of $f^n_t$ on $\hat W$.

  Next we show that $0 \not \in {\hat W}$. Indeed, suppose that $0 \in \hat W$.
  Then by \eqref{eq:nss} and by the first return hypothesis, $f_t^k(0) \notin U_0$
  for $1 \leq k < n$, thus $n$ is the first return time of $0$ to $U_0^t$.
  Again by the first return hypothesis, $f_t^n(0) \in U_1$. Since $0$ is the only critical point
  of $f_t^n$ on $\hat{W}$, all points between $0$ and $W$ get mapped by $f_t^n$ into $U_1$, so 
  $0 \in W$, contradicting our assumption that $W$ is non-central.

  Since $\hat W$ is the \emph{maximal} open interval with $f_t^n(\hat W) \subset U_0$
  and $f_t^n$ has no critical points on $\hat W$, it follows that
  $f_t^n(\hat W) = U_0$.

  Now let us show that in case $\phi_{1,t}$ has a central branch,
  $\hat W$ is disjoint from it.
  Suppose that $Z$ is the central branch with return time $n_0$
  and that $\hat W \cap Z \neq \emptyset$.
  Since $0 \in Z$ and $0 \not \in \hat W$, it follows that 
  there is $x \in \partial \hat W \cap Z$. Then $f_t^n(x) \in \partial U_0$,
  so $f_t^k(x) \not \in U_0$ for all $k \geq n$, thus
  $n_0 < n$. Hence, $f_t^n(\partial Z) \not \in U_0$,
  so $\partial Z \cap \hat{W} = \emptyset$. It follows that
  $Z$ contains $\hat W$ and $n_0 = n$, which contradicts 
  $n_0 < n$.

  Since $\theta < \theta_0 < \frac{1}{10(1+\Delta)}$ and
  $|U_1| < \theta \dist(U_1, \partial U_0)$,
  the derivative estimate follows  from \rlem{ExtExp}.
\end{proof}

\section{Breakdown of statistical stability} \label{sectDiscon}

In this section, we suppose that our \mruf{} is transversal and  prove Theorem~\ref{thmDiscon}. We again let $\Lambda_0$ denote the closure of the post-critical orbit of $f_0$ and $\Lambda_t$ the continuation of $\Lambda_0$. The absolutely continuous invariant probability measure for $f_0$ is $\mu_0$. 

\begin{lem}
  Given any $\eps>0$, there is a neighbourhood $W_\Lambda$ of the post-critical
  set $\Lambda_0$ of $f_0$ and a $\cC^\infty$ observable $\pot$ with $\pot \geq 0$ for which
  $$\pot(x)=1$$ for all $x \in W_\Lambda$ and for which
  $$\int \pot \, d\mu_0 < \eps.$$
\end{lem}
\begin{proof}
 By \rlem{ER}, 
$m(\Lambda_0)=0$.
As $\Lambda_0$ is compact and $\mu_0$ is absolutely continuous, Urysohn's Lemma provides a continuous function which is 1 on $\Lambda_0$ and $0$ on a closed set of $\mu_0$-measure $1-\eps$. Perturbing this function, the result follows. 
\end{proof}

Showing \rthm{Discon} therefore reduces to proving the following proposition, whose proof takes the rest of this section.

\begin{prop} \label{propDisconReduc}
    Let $a>0$.
  There exists $\alpha_0>0$ such that, for any neighbourhood $W_\Lambda$ of
  $\Lambda_0$ with the characteristic function $1_{W_\Lambda}$, 
  \[
    \limsup_{t \to 0^+} \int_{I} \sbar_{t, \lfloor at^{-1} \rfloor} 1_{W_\Lambda} \, dm
    \geq \alpha_0.
  \]
\end{prop}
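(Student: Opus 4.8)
The plan is to produce a sequence $t_N \to 0^+$ along which $f_{t_N}$ has $0$ as a super-attracting periodic point, of minimal period $p_N$ with $p_N \to \infty$ and $p_N = o(t_N^{-1})$, with an invariant interval $B_N \ni 0$ (essentially the immediate basin) of measure $m(B_N) \ggg t_N$; and then to show that by time $\tfrac a2 t_N^{-1}$ a fixed positive proportion of points have entered $B_N$, after which their orbits shadow the orbit of $0$, which in turn shadows the post-critical orbit of $f_0$ and so lies in $W_\Lambda$ at all but a vanishing fraction of times. To construct $t_N$, fix the nice intervals $U_1 \ss U_0$ of \rlem{Ut0}. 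By transversality and \rlem{Gamman}, for large $N$ the map $\xi_N(t) = f_t^{N+1}(0)$ is monotone with bounded distortion on $[0,\gamma_N]$ and $K_N := \xi_N([0,\gamma_N])$ has $|K_N| \geq r_0$. Taking $U_1$ small and $M_1$ large, \rlem{ER} guarantees that $K_N$ contains a branch $W_*$ of the first-entry map $\psi_1$ of entry time $m_* \leq M_1$, carrying an extension mapped diffeomorphically onto $U_0$ by \rlem{Ut1}, with $f_0^{m_*}(W_*) = U_1 \ni 0$; there are only finitely many such pairs $(W_*, m_*)$, so after passing to a subsequence we fix one. On the parameter interval $\xi_N^{-1}(W_*) \ss [0,\gamma_N]$ the map $t \mapsto f_t^{N+1+m_*}(0) = f_t^{m_*}\bigl(\xi_N(t)\bigr)$ converges uniformly, as $N \to \infty$, to $f_0^{m_*}\bigl(\xi_N(\cdot)\bigr)$, which sweeps monotonically over $f_0^{m_*}(W_*) = U_1$, hence over a neighbourhood of $0$; the intermediate value theorem yields $t_N \in \xi_N^{-1}(W_*)$ with $f_{t_N}^{N+1+m_*}(0) = 0$, so $0$ is periodic for $f_{t_N}$ of some minimal period $p_N \leq N+1+m_*$. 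Since $f_{t_N}^p(0) = \xi_{p-1}(t_N) \to \xi_{p-1}(0) = f_0^p(0) \neq 0$ for each fixed $p$ (the Misiurewicz map $f_0$ has no periodic critical point), we get $p_N \to \infty$.

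Next, \rlem{Gamman}(d) and the Misiurewicz estimate of \rlem{MisEst} give $|f_{t_N}^p(0) - f_0^p(0)| \lll \gamma_N/\gamma_{p-1} \lll \lambda^{-(N-p)}$, which falls below $\dist(\Lambda_0, 0) > 0$ once $p \leq N - O(1)$; as $f_0^p(0) \in \Lambda_0$, this forces $f_{t_N}^p(0) \neq 0$ for such $p$, so $p_N = N + O(1)$ and hence (\rlem{Gamman}(a)) $\gamma_{p_N-1} \mc \gamma_N \ggg t_N$. Writing $g := f_{t_N}^{p_N}$, the point $0$ is a super-attracting fixed point of $g$ with $g''(0) \mc |Df_0^{p_N-1}(f_0(0))| \mc \gamma_{p_N-1}^{-1}$ (\rlem{Gamman}(b),(e)), so there is an interval $B_N \ni 0$ with $|B_N| \mc \gamma_{p_N-1} \ggg t_N$, invariant under $g$, on which $g$ contracts with $|g(y)| \lll |B_N|^{-1}y^2$. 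Since $p_N \to \infty$ we have $|B_N| \to 0$, so $B_N \ss U_1$ for large $N$; and $p_N = o(t_N^{-1})$ because $t_N^{-1} \ggg \gamma_N^{-1} \ggg \lambda^N$.

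By the uniform exponential return-time tails $m(R_n) \lll e^{-\alpha n}$ of \rlem{ER} (via a renewal estimate) together with the distortion control on first-entry and first-return maps to $U_1$ (\rlem{Ext}, \rlem{Ut1}, \rlem{Ut2}), all but an $\eps$-fraction of points return to $U_1$ at least $\ggg t_N^{-1}$ times within $\tfrac a2 t_N^{-1}$ iterates, and at each return the orbit lands in $\tfrac12 B_N$ with conditional probability $\ggg m(B_N) \ggg t_N$; hence a proportion $\beta_0 > 0$, independent of $N$ and of $W_\Lambda$, enter $\tfrac12 B_N$ at some time $j_0 \leq \tfrac a2 t_N^{-1}$. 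Now fix $W_\Lambda \supset B(\Lambda_0, \eta)$. For such a point $x$, applying $g$ through a burn-in of $L_N := \lceil 2\log_2 N\rceil$ periods (so $L_N p_N = O(N\log N) = o(t_N^{-1})$) drives the orbit to within $|B_N| \, 2^{-2^{L_N}}$ of $0$; thereafter, for $j = j_0 + L_N p_N + \ell p_N + i$ with $1 \leq i \leq p_N$ one has $|f_{t_N}^j(x) - \xi_{i-1}(t_N)| \lll |B_N|^2 \, 2^{-2^{L_N+1}} \gamma_{i-1}^{-1} \to 0$ uniformly in $N$ for all $i$ outside a set of at most $K(\eta)$ indices, while for those same $i$ the point $\xi_{i-1}(t_N) = f_{t_N}^i(0)$ lies within $\eta/2$ of $f_0^i(0) \in \Lambda_0$ — both facts following from \rlem{Gamman}(b),(d) and a Misiurewicz estimate on the tail of the post-critical orbit, using $p_N = N+O(1)$. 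Since this exceptional set of indices has bounded size, $f_{t_N}^j(x) \in W_\Lambda$ for all but an $o(1)$-fraction of $j \in [j_0 + L_N p_N,\, a t_N^{-1}]$, so $\sbar_{t_N, \lfloor a t_N^{-1}\rfloor} 1_{W_\Lambda}(x) \geq \tfrac12 - o(1) \geq \tfrac13$ for large $N$. Integrating over the $\beta_0$-fraction of such $x$ gives $\int_I \sbar_{t_N, \lfloor a t_N^{-1}\rfloor} 1_{W_\Lambda}\,dm \geq \beta_0/3 =: \alpha_0 > 0$, a constant depending only on $a$; as $t_N \to 0^+$, the proposition follows.

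The main obstacle is the first two steps: manufacturing parameters at which $0$ is \emph{exactly} super-attracting periodic, of period comparable to $N$ (equivalently, to $\log t_N^{-1}$), so that the invariant interval $B_N$ has measure $\ggg t_N$ and the super-attracting orbit genuinely tracks the post-critical orbit of $f_0$. This is precisely where transversality enters, through \rlem{Gamman}, both in the intermediate-value argument along a first-entry branch sitting inside $\xi_N([0,\gamma_N])$ and in the derivative and distortion estimates that pin the period down to $N + O(1)$. The renewal estimate of the third step and the shadowing and distortion bounds used there are technical but routine.
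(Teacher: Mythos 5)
Your overall strategy matches the paper's: construct parameters $t_N \to 0^+$ with $0$ a super-attracting periodic point of period $p_N \mc \log t_N^{-1}$ for $f_{t_N}$, show that the immediate basin has Lebesgue measure $\mc t_N$ and that its forward orbit shadows $\Lambda_0$, and show a definite proportion of phase space falls into the basin within $o(t_N^{-1})$ iterates. Your IVT construction of $t_N$ along a first-entry branch inside $\xi_N([0,\gamma_N])$ is a workable variant of the paper's \rlem{after0}, which instead takes the first $t \in [0,\gamma_n]$ with $\xi_n(t)$ in a finite union $Q_t$ of preimages of $0$; the paper's choice additionally guarantees that the first return of $0$ to $U_0$ under $f_{t_n}$ occurs at time $p_n$, which is used to pin down the structure of the first return map $\phi_1$ in \rlem{Ut2} and \rlem{phi}. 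Your ``burn-in'' shadowing estimate is a minor variant of \rlem{Wlambda}.

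The genuine gap is in the return-frequency step, which you dismiss as ``technical but routine.'' The assertion that ``at each return the orbit lands in $\tfrac12 B_N$ with conditional probability $\ggg m(B_N)$'' is false. The first return map $\phi_1$ has a unimodal central branch $Z$ (of measure $\mc t_N^{1/2}$) that is not mapped onto $U_1$. If the orbit is currently in $Z \setminus V$, its next $\phi_1$-iterate cannot hit $V \supset B_N$: writing $I_0 = Z$, $I_{k+1} = \phi_1^{-1}(I_k) \cap Z$ and $J_k = I_k \setminus I_{k+1}$, one has $\phi_1(J_k) \subset J_{k-1}$ for $k \geq 1$ and $\phi_1(J_0) \cap Z = \emptyset$, so a point in $J_k$ spends $k+1$ consecutive $\phi_1$-returns with zero chance of landing in $B_N$, each costing $\mc p_N \mc \log t_N^{-1}$ iterates of $f$; the time budget $\tfrac a2 t_N^{-1}$ can therefore be eroded in a way not controlled by \rlem{ER} and bounded distortion alone. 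The paper circumvents this by constructing the full-branch Gibbs--Markov induced map $F = \phi_1 \circ \chi$ (with $\chi$ the first-entry map to $U_1 \setminus Z$), establishing a stretched-exponential tail $m(\tau=j) \lll \exp(-\alpha\sqrt{j})$ with constants uniform in $n$ (\rlem{Psin}), and then controlling $\tau_k$ via a second-moment bound (\rlem{tau}, used in the paper's final lemma of \rsec{Discon}). Your argument requires an equivalent construction; as written, the ``renewal estimate'' does not follow from the lemmas you cite.
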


Our strategy is to construct a sequence $t_n$ with $\lim_{n \to \infty} t_n = 0$
such that: the maps $f_{t_n}$ have $0$ as a super-attracting periodic point; 
most of the \emph{immediate basin of attraction} 
of the corresponding periodic orbit is contained in a small
neighbourhood of $\Lambda_0$; a definite proportion of all points 
in $I$ enter the immediate basin in fewer than
$\lfloor t^{-1} \rfloor / 2$ iterates. 

\begin{dfn}
The \emph{immediate basin of attraction} of a periodic point is the union of the connected components of the basin of attraction which contain points of the periodic orbit. 
\end{dfn}

Let $r_0$, $m_0$, $(\gamma_n)_{n\geq m_0}$ be as in \rlem{Gamman}.
Let $\theta_1>0$ be given by \rlem{BigDer}.

\begin{lem} \label{lemafter0} 
    There are $N\geq1$, $\theta \in (0,\theta_1)$ and a sequence of parameters $t_n >0$ such that
  \begin{enumerate}[label=(\alph*)]
    \item\label{lemafter0:a} 
      \(
        t_n 
        \mc \gamma_n 
        \mc |Df^{n}_{t_n}(f_{t_n}(0))|^{-1}
         \mc |Df^{n}_{0}(f_{0}(0))|^{-1}
      \);
    \item\label{lemafter0:b}
        for some $p_n \in [n, n+N],$ $f_{t_n}^{p_n} (0) = 0$ and $f_{t_n}^k(0) \notin (-\theta,\theta)$ for $0< k<p_n$.
  \end{enumerate}
\end{lem}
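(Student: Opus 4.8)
The plan is to use the map $t \mapsto \xi_n(t) = f_t^{n+1}(0)$ on the interval $[0,\gamma_n]$ and show that it sweeps across a definite-size interval containing $0$, so that a suitable parameter $t_n \in (0,\gamma_n)$ can be chosen for which the critical orbit returns to a prescribed small interval. More precisely, by \rlem{Gamman}\ref{lemgn-c}, $|\xi_n(\gamma_n) - \xi_n(0)| \geq r_0$, and by \rlem{Gamman}\ref{lemgn-b} we have $|\xi_n(0)| \lll \gamma_n \to 0$; hence for all large $n$ the point $\xi_n(0)$ is much closer to $0$ than $r_0$, and the image $\xi_n([0,\gamma_n])$ is a monotone arc of length at least $r_0$ starting near $0$. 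So this arc contains an interval of the form $(0, \delta)$ or $(-\delta,0)$ with $\delta \mc r_0$.

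Next I would pull the dynamics back into a neighbourhood of $0$. Fix $\theta \in (0,\theta_1)$ small, with $\theta_1$ from \rlem{BigDer}, and take $U = (-\theta,\theta)$. By \rlem{MisEst} and the continuation/expansion estimates, the first-entry map $\psi$ to any sufficiently small neighbourhood of $0$, applied to the point $\xi_n(0) = f_0^{n+1}(0) \in \Lambda_0$-nearby region, has a uniformly bounded first-entry time $N_0$ (since $f_0^{n+1}(0)$ lies within bounded distance of the post-critical set, which is recurrent to any fixed neighbourhood of $0$ in bounded time — more carefully, one argues as in the proof of \rlem{ER} that any interval of definite size maps onto $U_0$ in a uniformly bounded number of steps). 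Therefore there is $N \geq 1$, independent of $n$, and for each large $n$ some $k_n \leq N$ such that $f_t^{k_n}(\xi_n([0,\gamma_n])) \supset U$ for all $t$ in a subinterval, i.e.\ $f_t^{n+1+k_n}(0)$ as a function of $t$ sweeps across all of $(-\theta,\theta)$. Setting $p_n = n+1+k_n \in [n, n+N+1]$ (adjust $N$), by the Intermediate Value Theorem there exists $t_n \in (0,\gamma_n]$ with $f_{t_n}^{p_n}(0) = 0$, which gives \ref{lemafter0:b} once we also check $f_{t_n}^k(0) \notin (-\theta,\theta)$ for $0 < k < p_n$: this holds because on $[0,\gamma_n]$ the orbit $f_t^1(0), \dots, f_t^{n}(0)$ stays uniformly close to $\Lambda_0$ (by \rlem{Gamman}\ref{lemgn-d}, \ref{lemgn-e} and the continuation lemma, these points are within $O(\gamma_n)$ of the post-critical orbit of $f_0$, which avoids a fixed neighbourhood of $0$), while the final few steps $k_n$ are chosen precisely as a first-entry time to $U$, so they avoid $U$ until step $p_n$. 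Shrink $\theta$ if necessary so that the post-critical orbit of $f_0$ stays outside $(-2\theta,2\theta)$.

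For part \ref{lemafter0:a}, since $t_n \in (0,\gamma_n]$ we immediately get $t_n \lll \gamma_n$. For the reverse bound $\gamma_n \lll t_n$, I would use the distortion control: by \rlem{Gamman}\ref{lemgn-d} the derivative $|D\xi_n(s)|$ varies by a factor at most $e$ on $[0,\gamma_n]$, and $|D\xi_n(0)| \mc \gamma_n^{-1}$, so $\xi_n$ expands $[0,t_n]$ to an image of length $\mc t_n \gamma_n^{-1}$; but this image is contained in $\xi_n([0,\gamma_n])$ pushed forward $k_n \leq N$ more steps, and its endpoint $f_{t_n}^{n+1}(0)$ lies at distance $\mc r_0$ (or at least $\mc \theta$ after the $k_n$ further iterates, up to bounded factors) — actually one wants the image to have definite size, forcing $t_n \gamma_n^{-1} \ggg 1$, i.e.\ $t_n \ggg \gamma_n$. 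Combined with \rlem{Gamman}\ref{lemgn-b} and \ref{lemgn-e} (which compares $|Df_0^n(f_0(0))|$ and $|Df_{t}^n(f_{t}(0))|$ up to a factor $e$ for $t \in [0,\gamma_n] \supset \{t_n\}$), this yields the full chain of comparabilities in \ref{lemafter0:a}. The main obstacle is the second point: organizing the first-entry argument uniformly in $n$ so that the number of extra iterates $k_n$ needed to bring the swept arc onto all of $(-\theta,\theta)$ is bounded independently of $n$, and simultaneously verifying that no intermediate iterate of the critical point enters $(-\theta,\theta)$ — this requires carefully tracking that the arc $\xi_n([0,\gamma_n])$, though of definite length, stays in a region from which the return to $U$ is controlled, and using that the base critical orbit is bounded away from $0$.
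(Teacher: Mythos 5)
Your overall strategy (use the Tsujii estimates of \rlem{Gamman} to show that $t\mapsto\xi_n(t)$ sweeps a definite-length arc on $[0,\gamma_n]$, then locate a parameter where the critical point becomes periodic with period $n+O(1)$) is the right one, but there are two genuine errors that break the argument as written.

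First, you claim that $|\xi_n(0)|\lll\gamma_n$, deducing that the arc $\xi_n([0,\gamma_n])$ starts near $0$ and hence contains an interval $(0,\delta)$ or $(-\delta,0)$. This confuses the value $\xi_n(0)$ with the derivative: \rlem{Gamman}\ref{lemgn-b} controls $|D\xi_n(0)|\mc\gamma_n^{-1}$, whereas $\xi_n(0)=f_0^{n+1}(0)$ lies in the post-critical set $\Lambda_0$, which is bounded away from $0$ (this is exactly why $0$ has large first return time for nearby $t$). So the swept arc is a definite-length interval sitting away from $0$, and your subsequent plan of pushing it forward to cover $(-\theta,\theta)$ and applying the IVT to $t\mapsto f_t^{n+1+k_n}(0)$ has a moving-target problem (both the point and the map vary with $t$) that you do not resolve. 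The paper's device is cleaner: fix $N$ so that the set $Q_t=\bigcup_{k=1}^{N-1}f_t^{-k}(0)$ of bounded-time preimages of $0$ is $\eps_1/2$-dense (\rlem{Prep}), disjoint from $(-\theta,\theta)$, and at distance $\mc 1$ from $\Lambda_0$; then set $t_n=\min\{t\colon\xi_n(t)\in Q_t\}$. Hitting $Q_t$ simultaneously produces the periodic return $f_{t_n}^{p_n}(0)=0$, keeps the last few iterates outside $(-\theta,\theta)$, and — since $\xi_n(0)\in\Lambda_0$ and $\dist(Q_t,\Lambda_0)\mc 1$ — forces $|\xi_n(t_n)-\xi_n(0)|\mc 1$, which by the distortion bound gives $t_n\mc\gamma_n$ directly.

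Second, your verification that $f_{t_n}^k(0)\notin(-\theta,\theta)$ for $0<k\le n$ rests on the claim that for $t\in[0,\gamma_n]$ the points $f_t^k(0)$, $k\le n$, stay within $O(\gamma_n)$ of $\Lambda_0$. This is false: $|\xi_k(t)-\xi_k(0)|\mc t\,|D\xi_k(0)|\mc t/\gamma_k$, which for $k$ near $n$ and $t$ near $\gamma_n$ is of order $1$ (indeed \rlem{Gamman}\ref{lemgn-c} says the deviation at $k=n$ is at least $r_0$). One must restrict to $t\le\eps_0\gamma_n$ with $\eps_0$ chosen so small that $|\xi_k([0,\eps_0\gamma_n])|\lll\eps_0$ keeps all these points within $\dist(\Lambda_0,0)/2$ of $\Lambda_0$ for every $k\le n$, and then check that the chosen $t_n$ actually lies in $[0,\eps_0\gamma_n]$ (which follows because the shortened arc $\xi_n([0,\eps_0\gamma_n])$ still has length $\ge\eps_1$ by bounded distortion, so it already meets the $\eps_1/2$-dense set $Q_t$). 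Without this restriction, part \ref{lemafter0:b} is not established for your $t_n\in(0,\gamma_n]$.
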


\begin{proof}
    Recall that by \rlem{Gamman}, for $m_0 \leq k \leq n$, the map $\xi_k$ is monotone on $[0, \gamma_n]$
    and has universally bounded distortion. Thus $|\xi_k([0, \eps \gamma_n])| \lll \eps$ for $\eps > 0$.
    For $k < m_0$, we bound $|\xi_k([0, \eps \gamma_n])| \leq \eps \sup_{j < m_0} \sup_t |D \xi_j(t)|$.
    Overall,
    \[
        |\xi_k([0, \eps \gamma_n])|
        \lll \eps
        \quad \text{for all } k \leq n
        .
    \]
    We choose $\eps_0$ small enough so that
    $$\dist(\xi_k([0,\eps_0\gamma_n]), 0) > \dist(\Lambda_0, 0)/2 \quad \mbox{for all } k\leq n.$$
    By \rlem{Gamman}, $|\xi_n([0,\gamma_n])| \geq r_0$; since $\xi_n$ has bounded distortion,
    there is an $\eps_1>0$ for which 
    $|\xi_n([0, \eps_0\gamma_n])| > \eps_1$ for all large $n$. 
    Note that $\eps_1 < \dist(\Lambda_0, 0)/2$. 
Fix $N$ large so that, setting
\[
  Q_t = \bigcup_{k=1}^{N-1} f_t^{-k}(0),
\]
$Q_0$ is $\eps_1/3$-dense in $I$, see \rlem{Prep}.
For $t$ small, $Q_t$  is  $\eps_1/2$-dense.
There is $\theta \in (0, \theta_1)$ for which $Q_t \cap (-\theta, \theta) = \emptyset$
for small $t$. Moreover $\dist(Q_t, \Lambda_0) \mc 1$.

Define
\[
  t_n = \min \{
    t \in [0,\gamma_n] \colon
     \xi_n(t) \in Q_t 
  \}
  .
\]
  By construction, $0 < t_n < \eps_0\gamma_n$ and \ref{lemafter0:b} holds. 
  By \rlem{Gamman}, $\xi_{n}$ acts on $[0, \gamma_n]$
  as a diffeomorphism with bounded distortion.
  It follows from $\dist(\Lambda_0, Q_t) \mc 1$ 
  that $|\xi_n(t_n) -\xi_n(0)| \mc 1$.
  Thus $t_n \mc \gamma_n$; the remaining relations in
  \ref{lemafter0:a} follow from \rlem{Gamman}.
\end{proof}

We now work with the fixed map $f = f_{t_n}$, where 
$n$ is as large as necessary. Write $p = p_n$ for the period of $0$.
Let the intervals $U_j$ be given by \rlem{Ut0} for $\theta$ from \rlem{after0}.
Let $\phi_1$ denote the first return map to $U_1$.
An example graph of $\phi_1$ is shown on Figure~\ref{fig:m}.

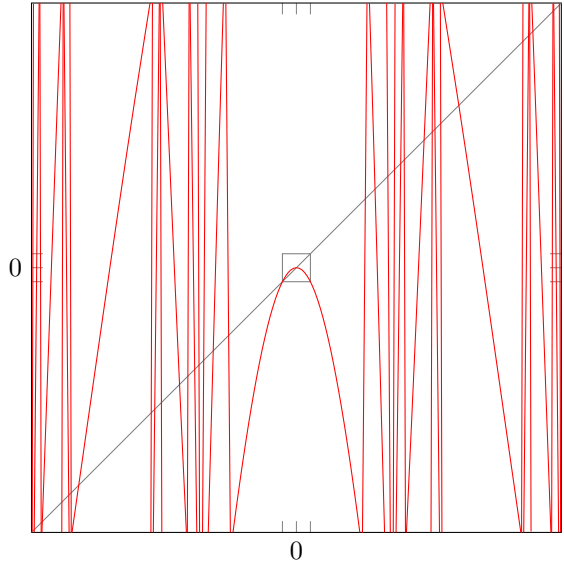
\begin{figure}[ht!]
\begin{tikzpicture}[
    declare function={
      logistic(\x) = 3.9602701272211527 * \x * (1.0 - \x);
      logisticc(\x)                        = logistic(logistic(\x));
      logisticcc(\x)                       = logistic(logisticc(\x));
      logisticccc(\x)                      = logistic(logisticcc(\x));
      logisticcccc(\x)                     = logistic(logisticccc(\x));
      logisticccccc(\x)                    = logistic(logisticcccc(\x));
      logisticcccccc(\x)                   = logistic(logisticccccc(\x));
      logisticccccccc(\x)                  = logistic(logisticcccccc(\x));
      logisticcccccccc(\x)                 = logistic(logisticccccccc(\x));
    }
  ]
  \begin{axis}[
    width=10cm,
    axis equal image,
    xmin=0.378448, xmax=0.621552,
    ymin=0.378448, ymax=0.621552,
    y dir=reverse,
    xtick={0.49358, 0.5, 0.50642},
    xticklabels={,$0$,},
    ytick={0.49358, 0.5, 0.50642},
    yticklabels={,$0$,},
	]
    
    \addplot[color=gray,domain=0.378448:0.621552]{1-x};
    \addplot[color=gray] coordinates { (0.50642, 0.50642) (0.50642, 1-0.50642) 
    (1-0.50642, 1-0.50642) (1-0.50642, 0.50642) (0.50642, 0.50642) };

    \addplot[color=red,smooth,samples=6,domain=0.397029:0.432842]{logisticcc(x)};
    \addplot[color=red,smooth,samples=6,domain=0.567158:0.602971]{logisticcc(x)};

    \addplot[color=red,domain=0.470893:0.500000]{logisticccc(x)};
    \addplot[color=red,domain=0.500000:0.529107]{logisticccc(x)};

    \addplot[color=red,smooth,samples=6,domain=0.438652:0.449519]{logisticcccc(x)};
    \addplot[color=red,smooth,samples=6,domain=0.550481:0.561348]{logisticcccc(x)};
    \addplot[color=red,smooth,samples=6,domain=0.383229:0.391996]{logisticcccc(x)};
    \addplot[color=red,smooth,samples=6,domain=0.608004:0.616771]{logisticcccc(x)};

    \addplot[color=red,smooth,samples=6,domain=0.459548:0.466489]{logisticccccc(x)};
    \addplot[color=red,smooth,samples=6,domain=0.533511:0.540452]{logisticccccc(x)};

    \addplot[color=red,smooth,samples=6,domain=0.434350:0.437090]{logisticcccccc(x)};
    \addplot[color=red,smooth,samples=6,domain=0.562910:0.565650]{logisticcccccc(x)};
    \addplot[color=red,smooth,samples=6,domain=0.393326:0.395701]{logisticcccccc(x)};
    \addplot[color=red,smooth,samples=6,domain=0.604299:0.606674]{logisticcccccc(x)};
    \addplot[color=red,smooth,samples=6,domain=0.451215:0.454282]{logisticcccccc(x)};
    \addplot[color=red,smooth,samples=6,domain=0.545718:0.548785]{logisticcccccc(x)};
    \addplot[color=red,smooth,samples=6,domain=0.379689:0.381945]{logisticcccccc(x)};
    \addplot[color=red,smooth,samples=6,domain=0.618055:0.620311]{logisticcccccc(x)};

    \addplot[color=red,smooth,samples=6,domain=0.467616:0.469691]{logisticccccccc(x)};
    \addplot[color=red,smooth,samples=6,domain=0.530309:0.532384]{logisticccccccc(x)};
    \addplot[color=red,smooth,samples=6,domain=0.456922:0.458587]{logisticccccccc(x)};
    \addplot[color=red,smooth,samples=6,domain=0.541413:0.543078]{logisticccccccc(x)};

    \addplot[color=red,smooth,samples=6,domain=0.437504:0.438240]{logisticcccccccc(x)};
    \addplot[color=red,smooth,samples=6,domain=0.561760:0.562496]{logisticcccccccc(x)};
    \addplot[color=red,smooth,samples=6,domain=0.392345:0.392972]{logisticcccccccc(x)};
    \addplot[color=red,smooth,samples=6,domain=0.607028:0.607655]{logisticcccccccc(x)};
    \addplot[color=red,smooth,samples=6,domain=0.449962:0.450762]{logisticcccccccc(x)};
    \addplot[color=red,smooth,samples=6,domain=0.549238:0.550038]{logisticcccccccc(x)};
    \addplot[color=red,smooth,samples=6,domain=0.382286:0.382891]{logisticcccccccc(x)};
    \addplot[color=red,smooth,samples=6,domain=0.617109:0.617714]{logisticcccccccc(x)};
    \addplot[color=red,smooth,samples=6,domain=0.433234:0.433946]{logisticcccccccc(x)};
    \addplot[color=red,smooth,samples=6,domain=0.566054:0.566766]{logisticcccccccc(x)};
    \addplot[color=red,smooth,samples=6,domain=0.396055:0.396682]{logisticcccccccc(x)};
    \addplot[color=red,smooth,samples=6,domain=0.603318:0.603945]{logisticcccccccc(x)};
    \addplot[color=red,smooth,samples=6,domain=0.454744:0.455564]{logisticcccccccc(x)};
    \addplot[color=red,smooth,samples=6,domain=0.544436:0.545256]{logisticcccccccc(x)};
    \addplot[color=red,smooth,samples=6,domain=0.378771:0.379356]{logisticcccccccc(x)};
    \addplot[color=red,smooth,samples=6,domain=0.620644:0.621229]{logisticcccccccc(x)};
  \end{axis}
\end{tikzpicture}
\caption{
  Graph of $\phi_1 \colon U_1 \to U_1$ when $0$ is a periodic point.
  Between every two branches there are countably many other branches;
  $\phi_1$ is uniformly expanding outside the small invariant interval 
  in the middle.
}
\label{fig:m}
\end{figure}

\rlem{after0} guarantees that the first return of $0$ under $f$ to $U_0$ 
is $0 \in U_1$, thus by \rlem{Ut2}, $\phi_1$ restricted to $U_1$ 
 has a unimodal central branch which we denote by $Z$;
all other branches are full with a uniform distortion bound.
On $Z$, $\phi_1 = f^p$. 
We denote  the immediate
basin of attraction (with respect to $\phi_1$) of $0$ by 
$V$. As $\phi_1(0) = 0$, $V$ is an interval.

\begin{lem} \label{lemWlambda}
      Given any 
      neighbourhood $W_\Lambda$ of $\Lambda_0$ and $\eps >0$,  
      the following holds for all $n$ large enough.
      For all $x \in V$ and $k \geq 1$, 
      the Birkhoff average of the characteristic function $1_{W_\Lambda}$ of $W_\Lambda$ satisfies
      $$
      \sbar_{t_n, k} \, 1_{W_\Lambda}(f(x)) \geq 1 - \eps
        .
      $$
\end{lem}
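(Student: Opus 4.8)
The plan is to decompose the $f_{t_n}$-orbit of an arbitrary point of $V$ into blocks of length $p=p_n$, show that every such block shadows the single orbit segment $0, f_{t_n}(0),\dots,f_{t_n}^{p-1}(0)$ closely enough to inherit membership in $W_\Lambda$, and then show that all but boundedly many (in $n$) of the points of that reference segment lie in $W_\Lambda$. Throughout, fix $\delta>0$ (small, say $c_\delta\ge10$ below) with $B(\Lambda_0,\delta)\subseteq W_\Lambda$.

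I would start with two facts about $V\subseteq Z$ (the inclusion holds because a point of $V\setminus Z$ would be separated from $0$ by an endpoint of $Z$, whose $f_{t_n}$-image leaves $U_0$ forever). First, $0$ is a superattracting fixed point of $\phi_1$ with $|D^2(f_{t_n}^p)(0)|\mc|Df_{t_n}^{p-1}(f_{t_n}(0))|\mc t_n^{-1}$ — using $Df_{t_n}(0)=0$, non-degeneracy, \rlem{after0}\ref{lemafter0:a} and that $f_{t_n}^j(0)\notin(-\theta,\theta)$ for $0<j<p$ — so $V$ does not extend past the nonzero fixed point of $\phi_1$ nearest $0$, which lies at distance $\mc t_n$; hence $\diam V\lll t_n$. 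Second, for $y\in V$ and $1\le\ell\le p$ let $W^{(\ell)}$ be the interval between $f_{t_n}^\ell(y)$ and $f_{t_n}^\ell(0)$ (recall $f_{t_n}^p|_Z=\phi_1$, so $f_{t_n}^p(y)=\phi_1(y)\in V$). Starting from $|W^{(1)}|\mc|y|^2\lll t_n^2$ (quadratic tip), I would prove by induction on $\ell$ that
\[
  |W^{(\ell)}|\lll t_n\,\lambda^{\ell-p},\qquad 1\le\ell\le p,
\]
propagating via: each $W^{(\ell)}$ avoids $U_1$ (its endpoints do, since $f_{t_n}^\ell(0)\notin(-\theta,\theta)$ and $y$ is on the central branch so $f_{t_n}^\ell(y)\notin U_1$ for $\ell<p$, and $|W^{(\ell)}|$ is too small to straddle $U_1$); the distortion bound \rlem{DistExp}; the expansion $|Df_{t_n}^{p-\ell}(f_{t_n}^\ell(0))|\ge C\lambda^{p-\ell}$ (second bullet of \rlem{MisEstF}, since $f_{t_n}^p(0)=0$); and $|Df_{t_n}^{p-1}(f_{t_n}(0))|\mc t_n^{-1}$ (from \rlem{after0}\ref{lemafter0:a}, as $p$ and $n$ differ by $O(1)$). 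In particular $|W^{(\ell)}|\lll t_n$ uniformly, so for $n$ large, $f_{t_n}^\ell(y)\in W_\Lambda$ whenever $f_{t_n}^\ell(0)\in B(\Lambda_0,\delta/2)$.

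Next I would count the bad indices $i\in\{1,\dots,p-1\}$ with $f_{t_n}^i(0)\notin B(\Lambda_0,\delta/2)$. For $i$ below a fixed threshold, continuity of $(x,t)\mapsto f_t(x)$ gives $f_{t_n}^i(0)\to f_0^i(0)\in\Lambda_0$, so these are harmless for large $n$. For larger $i\le n$, combining \rlem{Gamman}\ref{lemgn-d} (distortion of $\xi_{i-1}$ on $[0,\gamma_n]\supseteq[0,t_n]$), \rlem{Gamman}\ref{lemgn-b} ($|D\xi_{i-1}(0)|\mc|Df_0^{i-1}(f_0(0))|$), the chain rule $|Df_0^{i-1}(f_0(0))|=|Df_0^n(f_0(0))|/|Df_0^{n-i+1}(f_0^i(0))|$, \rlem{after0}\ref{lemafter0:a} ($|Df_0^n(f_0(0))|\mc t_n^{-1}$) and \rlem{MisEst} ($|Df_0^{n-i+1}(f_0^i(0))|\ge C\lambda^{n-i+1}$, since $f_0^i(0)\in\Lambda_0$ stays off a small neighbourhood of $0$) gives
\[
  \dist\bigl(f_{t_n}^i(0),\Lambda_0\bigr)\le|\xi_{i-1}(t_n)-\xi_{i-1}(0)|\lll\lambda^{i-n}.
\]
Thus $f_{t_n}^i(0)\in B(\Lambda_0,\delta/2)$ for all $i\le n-c_\delta$, where $c_\delta$ depends only on $\delta$ and $\lambda$; since $p\le n+N$, at most $c_\delta+N$ indices in $\{1,\dots,p-1\}$ are bad — a bound independent of $n$.

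To assemble: for $x\in V$ the $f_{t_n}$-orbit of $f(x)$ reads, block by block, $f_{t_n}^\ell(\phi_1^j(x))$ with $\ell=0,\dots,p-1$ and $j=0,1,\dots$ (the $j=0$ block starting at $\ell=1$). By the two preceding steps each block has at most $b_\delta:=c_\delta+N+1$ points outside $W_\Lambda$ (the extra $1$ being the near-$0$ point $\phi_1^j(x)$), and $\{1,\dots,k\}$ meets at most $k/p+2$ blocks, so
\[
  \sbar_{t_n,k}\,1_{W_\Lambda}(f(x))\ \ge\ 1-\frac{b_\delta(k/p+2)}{k}\ \ge\ 1-\frac{b_\delta}{p}-\frac{2b_\delta}{k},
\]
while if $k\le n-c_\delta$ the sum equals $k$ exactly. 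As $p=p_n\ge n\to\infty$ and $b_\delta,c_\delta$ do not depend on $n$, the right side exceeds $1-\eps$ once $n$ is large, uniformly over $x\in V$ and $k\ge1$, which is the claim. The main obstacle is the third step: extracting $\dist(f_{t_n}^i(0),\Lambda_0)\lll\lambda^{i-n}$ from \rlem{Gamman} and \rlem{after0} is precisely what forces the bad-index count to stay bounded rather than grow linearly in $n$; the inductive length estimate for $W^{(\ell)}$ is the other point demanding care with the neighbourhoods and distortion bounds.
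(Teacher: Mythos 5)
Your proposal is correct and follows essentially the same route as the paper's proof: decompose the orbit into blocks of length $p$ using $\phi_1(V)\subset V$, show $|f^j(V)|$ shrinks exponentially in $n-j$ via \rlem{MisEstF}, and show $\dist(f^j(0),\Lambda_0)$ is exponentially small in $n-j$ via \rlem{Gamman} and \rlem{MisEst}, then count the bad indices per block. The only (harmless) difference is that you push the distance estimate far enough to get a \emph{bounded} number of bad indices per block, whereas the paper settles for at most $\eps n + O(1)$ of them, which already suffices.
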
  
\begin{proof}
   Note that $\phi_1(V) \subset V$
  and recall that the first return of $0$ to $U_0$ is at time $p$
  with $n+1 \leq p \leq n + N$.
  Given $\eps >0$ we shall show, 
  for large $n$ and $j \leq (1-\eps) n$,  that $f^j(V)$  and 
  $\dist(f^j(V), \Lambda_0)$ are sufficiently small to guarantee
  that $f^j(V) \subset W_\Lambda$.
  Since $f^{p\ell}(V) \subset V$ for each $\ell \geq 0$, this implies
  that $f^j(V) \subset W_\Lambda$ for all $p\ell + 1 \leq j < p\ell + (1-\eps) n$.
  From this, the Birkhoff estimate follows. 

  For $j = 1, \ldots, n$, $f^j(V) \cap U_0 = \emptyset$. 
  \rlem{MisEstF} implies that $|f^{j}(V)|$ is exponentially small in $n-j$.
  By \rlem{MisEst},  
  $|Df_0^{n-j}(f_0^k(0))| \ggg \lambda^{n-j}$. 
  With the estimates of  \rlem{Gamman}, one deduces that 
  $
    \dist(f^j(0), \Lambda_0) 
    = \dist(\xi_{j-1}(t_n),\Lambda_0)
  $ 
  is exponentially small in $n-j$. Thus so is
  $\dist(f^j(V), \Lambda_0)$. The proof 
  is complete.
  \end{proof}
We establish properties of $\phi_1$ on $Z$.
\begin{lem} \
  \label{lemphi}
  \begin{enumerate}[label=(\alph*)]
    \item\label{lemphi:a} $|Z| \mc t_n^{1/2}$ and $|V| \mc t_n$;
    \item\label{lemphi:b} there exists $\eta >0$, independent of $n$,
        such that $|D\phi_1| > e^{n\eta}$ on $Z \setminus \phi_1^{-1}(Z)$;
    \item\label{lemphi:c} $\log |D\phi_1| > 1/2$ on $U_1 \setminus V$.
  \end{enumerate}
\end{lem}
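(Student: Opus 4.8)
The plan is to reduce all three parts to a single estimate on the central branch. On $Z$ we have $\phi_1=f^{p-1}\circ f$ with $p\in[n,n+N]$; since $f^j(0)\notin(-\theta,\theta)$ for $0<j<p$ by \rlem{after0}, the map $f^{p-1}$ is a diffeomorphism of $f(Z)$ with bounded distortion (\rlem{DistExp}) and $|Df^{p-1}(f(0))|\mc|Df^{n}_{t_n}(f_{t_n}(0))|\mc t_n^{-1}$ by \rlem{after0}. Combining this with non-degeneracy of the critical point ($|f(x)-f(0)|\mc x^2$, $|f'(x)|\mc|x|$ near $0$) I would first prove: for $x\in Z$, $|\phi_1(x)|\mc t_n^{-1}x^2$ and $|D\phi_1(x)|\mc t_n^{-1}|x|$, and moreover a \emph{uniform quadratic approximation} $\phi_1(x)=\beta_n x^2(1+\rho_n(x))$, $D\phi_1(x)=2\beta_n x(1+\sigma_n(x))$ valid for $|x|\le Ct_n$, where $\beta_n=\tfrac12 f_{t_n}''(0)\,Df_{t_n}^{p-1}(f_{t_n}(0))$, $|\beta_n|\mc t_n^{-1}$ and $\sup_{|x|\le Ct_n}(|\rho_n(x)|+|\sigma_n(x)|)\to0$ as $n\to\infty$. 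The gain over a crude distortion bound is that on the $O(t_n)$-scale the relevant subinterval of $f(Z)$ has length $\mc t_n^2$, so its $f^{p-1}$-image has length $\mc t_n\to0$ and the distortion of $f^{p-1}$ there tends to $1$; while $f_t''$ is continuous and nonzero at $0$, so the quadratic Taylor expansion of $f_t$ at $0$ is uniform for small $t$.

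Granting this, \ref{lemphi:a} and \ref{lemphi:b} are short. The endpoints of $Z$ are mapped by $\phi_1=f^p$ into $\partial U_1$, at distance $\mc1$ from $0=\phi_1(0)$, so the two-sided estimate forces $t_n^{-1}|Z|^2\mc1$, i.e.\ $|Z|\mc t_n^{1/2}$. For $V$: if $|x|\ge Ct_n$ with $C$ large then $|\phi_1(x)|\ge c_1t_n^{-1}x^2>2|x|$, and since $\phi_1(V)\subset V\subset Z$ a point of $V$ never leaves $Z$, so its $\phi_1$-orbit cannot grow geometrically without bound; hence $V\subset(-Ct_n,Ct_n)$ and $|V|\lll t_n$. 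Conversely $|\phi_1(x)|\le Ct_n^{-1}x^2\le\tfrac12|x|$ for $|x|\le\eps t_n$ with $\eps$ small, so $(-\eps t_n,\eps t_n)\subset V$ and $|V|\ggg t_n$. For \ref{lemphi:b}: if $x\in Z$ and $\phi_1(x)\notin Z$ then $|\phi_1(x)|\ge\dist(0,\partial Z)\mc|Z|\ggg t_n^{1/2}$, which with $|\phi_1(x)|\lll t_n^{-1}x^2$ gives $|x|\ggg t_n^{3/4}$ and hence $|D\phi_1(x)|\ggg t_n^{-1}|x|\ggg t_n^{-1/4}$; since $t_n\mc|Df_0^n(f_0(0))|^{-1}\lll\lambda^{-n}$ by \rlem{MisEst}, this exceeds $\lambda^{n/4}$, so any fixed $\eta<\tfrac14\log\lambda$ works for $n$ large.

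For \ref{lemphi:c}, a non-central branch has $|D\phi_1|\ge5>e^{1/2}$ by \rlem{Ut2}. On $Z\setminus V=(z_-,v_-]\cup[v_+,z_+)$ I would use that $|D\phi_1|^{-1/2}$ is convex on each component of $Z\setminus\{0\}$, being a composition of monotone branches of the $S$-unimodal $f_{t_n}$ avoiding the critical point (a property preserved under such compositions). Since $V=(v_-,v_+)\subset(-Ct_n,Ct_n)$ the quadratic approximation is valid at $v_\pm$; as $\phi_1(\partial V)\subset\partial V$ and a $2$-cycle straddling $0$ is impossible (the relations $\phi_1(v_-)=v_+$, $\phi_1(v_+)=v_-$ with $\phi_1(x)=\beta_n x^2(1+o(1))$ would force $v_-,v_+$ to have equal sign), one endpoint is a fixed point and the other its preimage, and in both cases $|v_\pm|=|\beta_n|^{-1}(1+o(1))$, so $|D\phi_1(v_\pm)|=2|\beta_n||v_\pm|(1+o(1))=2+o(1)$. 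Hence $|D\phi_1|^{-1/2}=2^{-1/2}+o(1)<e^{-1/4}$ at $v_\pm$ for $n$ large; at the endpoints $z_\pm$ of $Z$ the two-sided estimate gives $|D\phi_1|\mc t_n^{-1}|Z|\mc t_n^{-1/2}$, so $|D\phi_1|^{-1/2}\mc t_n^{1/4}<e^{-1/4}$ for $n$ large; convexity of $|D\phi_1|^{-1/2}$ on $[z_-,v_-]$ and on $[v_+,z_+]$ then keeps it below $e^{-1/4}$ throughout, i.e.\ $\log|D\phi_1|>\tfrac12$.

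The main obstacle is the opening step: obtaining the quadratic approximation with an error that genuinely tends to $0$ (crude distortion control of $f^{p-1}$ on $f(Z)$ gives only an $O(1)$ constant), which is why one must localise to the $O(t_n)$-scale where the image of $f^{p-1}$ shrinks to a point. A secondary point to verify is that $\partial V$ lies at scale $O(t_n)$, hence bounded away from $\partial Z$ — automatic once $|V|\mc t_n\ll t_n^{1/2}\mc|Z|$ — so that the quadratic approximation is legitimately applied at $v_\pm$ in both \ref{lemphi:a} and \ref{lemphi:c}.
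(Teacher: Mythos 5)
Your proof is correct, and the overall scaffold (bounded distortion of $f^{p-1}$ on $f(Z)$ via \rlem{DistExp}, giving $|D\phi_1(x)|\mc t_n^{-1}|x|$ on $Z$) matches the paper. Parts~\ref{lemphi:a} and~\ref{lemphi:b} are essentially the paper's arguments: for $|Z|$, both use $\phi_1(\partial Z)\subset\partial U_1$; for $|V|$, the paper computes $|V|\mc|\psi_1'(f(0))|^{-1}$ directly from $|f(V)|\mc|V|^2$ and $|\phi_1(V)|\mc|V|$, whereas you argue by geometric growth/contraction of orbits — a mild but sound variant; for~\ref{lemphi:b} both identify $Z\setminus\phi_1^{-1}(Z)$ as lying at distance $\ggg t_n^{3/4}$ from $0$.

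Part~\ref{lemphi:c} is where you take a genuinely different route. The paper establishes the refined approximation $D\phi_1(x)=a_0 x\,D\psi_1(f(0))(1+o(1))$ on the interval $I_1=Z\cap\phi_1^{-1}(Z)$ (of length $\mc t_n^{3/4}$) via the Koebe Principle, deduces $|D\phi_1|$ at the fixed point $y\in\partial V$ is $2(1+o(1))$, and extends to $Z\setminus V$ by the proportionality $D\phi_1(x)=(x/y)D\phi_1(y)(1+o(1))$ — a formula valid only on $I_1$, so the outer ring $Z\setminus I_1$ is implicitly covered by part~\ref{lemphi:b}. You instead invoke convexity of $|D\phi_1|^{-1/2}$ on each component of $Z\setminus\{0\}$ (preserved under composition of the monotone branches of the $S$-unimodal $f$, i.e.\ the minimum principle), so you only need the refined estimate at the two boundary points $v_\pm$ of $V$ (where $|D\phi_1|^{-1/2}=2^{-1/2}+o(1)<e^{-1/4}$) and the crude estimate at $\partial Z$ (where $|D\phi_1|^{-1/2}\mc t_n^{1/4}\to 0$); convexity interpolates for free. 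This buys a tighter logical structure: no need to track the intermediate scale $I_1$, and no hidden reliance of~\ref{lemphi:c} on~\ref{lemphi:b} over the outer part of $Z$. Your exclusion of a period-$2$ orbit in $\partial V$ straddling $0$ by sign considerations is the same justification the paper leaves implicit when it speaks of ``the fixed point $y$ in $\partial V$''.
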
  

\begin{proof}
  Since $n+1 \leq p \leq n + N$, $|Df^n(f(0))| \mc |Df^{p-1}(f(0))|$. 
  Let $\psi_1$ be the first entry map to $U_1$.
  Its branches have bounded distortion (\rlem{Ut1}), so
  \rlem{after0} entails that
  \[
    |\psi_1'(f(0))|
    \mc |D f^{p-1}(f(0))|
    \mc |D f^n(f(0))|
    \mc t_n^{-1}
    .
  \]
  The interval $V$ is a neighbourhood of the non-degenerate
  critical point, $|f(V)| \mc |V|^2$. At the same time, $|\phi_1(V)| \mc |V|$
  (see Figure~\ref{fig:m}: $Z$ is the domain of the central branch,
  and $V$ is the small invariant interval in the middle).
  Observe that $f^{p} = \psi_1 \circ f$ on $Z$. 
  Hence 
  $$
    |V| \mc |\psi_1'(f(0))|^{-1} \mc t_n
    .
  $$
  Meanwhile, $|\phi_1(Z)| \mc 1$, so
  $|f(Z)| \mc t_n$ and $|Z| \mc t_n^{1/2}$.
  This proves~\ref{lemphi:a}.

  Let $I_1 = Z \cap \phi_1^{-1}(Z)$. By a similar argument, 
  $$ 
    |I_1| \mc \sqrt{t_n^{1/2}t_n} = t_n^{3/4}.
  $$ 
  Let $J_0$ be the union of the pair of symmetric intervals 
  $Z \setminus I_1$, then $\dist(J_0, 0) \mc t_n^{3/4}$ 
  (non-degeneracy implies $I_1$ is roughly centred on $0$). 
  On $J_0$, 
  $$
    |Df| \ggg t_n^{3/4}
  $$
  so,
  on the same set, 
  $$
    |D\phi_1| 
    \ggg t_n^{-1} t_n^{3/4} 
    = t_n^{-1/4}
    .
  $$
  By \rlem{after0},  $t_n^{-1} \mc |Df_0^n(f_0(0))|$, and exponential growth 
  of the latter implies the existence of an $\eta > 0$ for which 
  $t_n < \exp(-5n\eta)$ (for all $n$).
  Combined with the previous sentence, we obtain~\ref{lemphi:b}. 

  It remains to prove~\ref{lemphi:c}. 
  On $I_1$, we claim 
  \begin{equation}\label{eqn:dphi1}
    \frac{D\phi_1(x)}{ a_0 x D\psi_1(f(0))} = 1+o(1)
    \quad \text{as } n \to \infty,
  \end{equation}
  where $a_0 = D^2f_0(0) \ne 0$. 
  Observe that $\phi_1 = \psi_1 \circ f$ and $f(I_1)$ is mapped by $\psi_1$ into $Z$.
  We have $|Z| < t_n^{1/3} \dist(Z, \partial U_1)$;
  by the Koebe Principle, $\psi_1 \colon f(I_1) \to Z$
  has distortion bounded by $1+o(1)$.
  By continuity, $D^2f(x) = a_0(1+o(1))$. 
  Integrating, $Df(x) = a_0x(1+o(1))$, which gives the claim.
        
  This time, integrate $D\phi_1$ to get 
  $$
    \phi_1(x) = \frac{bx^2}{2}(1+(o(1))
  $$
  with $b = a_0 D\psi_1(f(0)).$ The fixed point $y$ in $\partial V$ satisfies 
  $$
    |y| = |\phi_1(y)| = \frac{|b| y^2}{2}(1+(o(1))
    ,
  $$
  so $|y| = \frac{2}{|b|}(1+o(1)).$
  Inserting this in \eqref{eqn:dphi1} gives $|D\phi_1(y)|/2 = 1+o(1)$ and
  $$ D\phi_1(x) = \frac{x}{y} D\phi_1(y)(1+o(1)).$$ 
  If $y'$ is the other boundary point of $V$, then $y' = -y(1+o(1)).$ 
  Hence on $Z \setminus V$, $|x/y| \geq 1+o(1)$ and  
  $\log |D\phi_1(x)| > \log 2 - 1/10 > 1/2$.
\end{proof}

Let $\chi \colon U_1 \setminus V \to U_1 \setminus Z$ be the first entry map to
$U_1 \setminus Z$. By \rlem{phi}, it is well-defined (almost surely).
On $U_1 \setminus Z$ it is identity, while on $Z \setminus V$ it has countably
many branches, each being mapped diffeomorphically onto a connected
component of $U_1 \setminus Z$.

We define $F \colon U_1 \to U_1$ by 
\begin{equation}
    \label{eq:F}
  F(x) = \begin{cases}
    \phi_1 \circ \chi(x), & x\in U_1 \setminus V, \\
      A(x), & x \in V,
  \end{cases}
\end{equation}
where $A$ is an affine homeomorphism between $V$ and $U_1$. 
Let $\tau \colon U_1\setminus V \to \N$ be the corresponding inducing time,
so $F (x) = f^{\tau(x)}(x)$, and set $\tau = 1$ on $V$.

\begin{lem}\label{lemPsin} \
  \begin{enumerate}[label=(\alph*)]
    \item\label{lemPsin:a}  
        All branches of all iterates of $F$ have uniformly bounded distortion (independent of the iterate and of $n$).  
      The image of such a branch is  $U_1$. 
    \item\label{lemPsin:b}
      There exists a constant $\alpha >0$, independent of $n$, so that
      $$
        m(\tau = j) \lll \exp(-\alpha \sqrt{j})
        \qquad \text{for all } j.
      $$
        \end{enumerate}
\end{lem}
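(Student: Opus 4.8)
\medskip
\noindent\emph{Proof plan.}
For part~\ref{lemPsin:a} the plan is to show that every branch $W$ of $F$ other than the central branch $V$ (on which $F=A$ is affine and hence has distortion $1$) is $\delta$-extensible \emph{with extension contained in $U_1$}: there is $\hat W$ with $W\subset\hat W\subset U_1$ on which $f^{\tau}$, $\tau$ the inducing time constant on $W$, is a diffeomorphism onto $U_0$. For $W\subset U_1\setminus Z$, where $F=\phi_1$, this is exactly \rlem{Ut2}. For $W\subset Z\setminus V$ one writes $F|_W=\phi_1\circ(\chi|_{W'})$, where $W'$ is the branch of $\chi$ through $W$ and $\chi(W')$ is a component $C$ of $U_1\setminus Z$; if $B=\chi(W)$ is the non-central $\phi_1$-branch involved, its extension $\hat B$ from \rlem{Ut2} lies in $U_1$ and is disjoint from $Z$, hence $\hat B\subset C$, and $\hat W:=(\chi|_{W'})^{-1}(\hat B)\subset W'\subset U_1$ works. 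Then by induction on $m$: every branch of $F^m$ whose itinerary (of $F$-branches visited) ends in a non-affine branch extends to a diffeomorphism onto $U_0$ with extension inside $U_1$ — one pulls $U_0$ back one $F$-step at a time through the extensions above and through $A\colon V\to U_1$ (which carries intervals $\subset U_1$ to intervals $\subset V\subset U_1$) — while a terminal run of affine branches is harmless since composition with an affine map does not change distortion, so one reduces to the previous case. Since both components of $U_0\setminus U_1$ have length $\ggg|U_1|$ by the condition $|U_1|\le\theta\dist(U_1,\partial U_0)$ of \rlem{Ut0}, every branch of every $F^m$ is $\delta$-extensible for a fixed $\delta>0$, and the Koebe Principle \rlem{Ext} — applicable because $|Df^\tau|^{-1/2}$ is convex on each branch, this being preserved under composition away from the critical point — gives a uniform distortion bound. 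Each branch of $F^m$ surjects onto $U_1$ because the last map of its itinerary does.

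For part~\ref{lemPsin:b}, write $m'(x)\ge 0$ for the number of $\phi_1$-iterations comprising $\chi$ at $x$ ($m'=0$ on $U_1\setminus Z$), let $p\in[n,n+N]$ be the period of $0$, so that $\tau(x)=p\,m'(x)+r(x)$, where $r(x)$ is the return time of the final non-central $\phi_1$-branch (and $\tau=r_{U_1}$ when $m'=0$). The ingredients are: (i) by \rlem{ER} (and the quadratic critical point), $m(\{y\in U_1:r_{U_1}(y)=r\})\lll e^{-\alpha_1 r}$, where we may take $\alpha_1$ as small as we wish; (ii) by \rlem{phi}\ref{lemphi:b}--\ref{lemphi:c}, on each of the (two) branches of $\chi$ inside $Z\setminus V$ at level $\ell$ one has $|D\chi|>e^{n\eta}$, hence $m((\chi|_{\mathrm{branch}})^{-1}(E))<e^{-n\eta}m(E)$; (iii) setting $S_\ell=\{x\in Z:\phi_1^j(x)\in Z,\ 0\le j<\ell\}$, the quadratic form of the central branch ($\phi_1(x)=\tfrac b2 x^2(1+o(1))$ near $0$ with $|b|\mc t_n^{-1}$, from \rlem{phi}\ref{lemphi:a}) forces $S_\ell\approx\{|x|\lll t_n^{1-2^{-\ell}}\}$, so $\{m'=\ell\}=S_\ell\setminus S_{\ell+1}$ is a union of two intervals with $m(\{m'=\ell\})\lll t_n^{1-2^{-\ell}}$ for $\ell\lll\log n$ and $\lll n\,t_n 2^{-\ell}$ for $\ell\ggg\log n$; and (iv) $\log t_n^{-1}\mc n$, by \rlem{after0}\ref{lemafter0:a} and \rlem{MisEst}.

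Combining (i) and (ii) (and using $m(\{m'=\ell\})$ as a trivial bound, then interpolating), $m(\tau=j,\ m'=\ell,\ r=j-p\ell)\lll (m(\{m'=\ell\})\,e^{-n\eta})^{1/2}e^{-\alpha_1(j-p\ell)/2}$; summing over $\ell\ge 1$ with $j-p\ell\ge 1$ and adding the $m'=0$ term $\lll e^{-\alpha_1 j}$,
\[
  m(\tau=j)\ \lll\ e^{-\alpha_1 j}\ +\ \sum_{\ell=1}^{\lfloor (j-1)/p\rfloor} m(\{m'=\ell\})^{1/2}\,e^{-\alpha_1(j-p\ell)/2}.
\]
The sum is dominated by $\ell\mc 1$, contributing $\lll e^{-cj}$, and by $\ell\mc j/p\mc j/n$, where the bound $n\,t_n2^{-\ell}$ gives a contribution $\lll n^{1/2}\exp(-\tfrac12\log t_n^{-1}-c\,j/n)$; since $\tfrac12\log t_n^{-1}+c\,j/n\ge 2\sqrt{\tfrac c2(\log t_n^{-1})\,j/n}\mc\sqrt j$ by AM--GM (using $\log t_n^{-1}\mc n$), this is $\lll e^{-\alpha\sqrt j}$. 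All intermediate $\ell$ are dominated by these two regimes, and one gets $m(\tau=j)\lll e^{-\alpha\sqrt j}$ with $\alpha>0$ independent of $n$.

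The hard part in \ref{lemPsin:a} is keeping distortion under control despite the central affine branch and despite $\chi$ itself, whose branches do \emph{not} have bounded distortion ($\phi_1$ is badly distorted near $\partial V$); the key observation that rescues the argument is that the $\chi$-part of every non-central branch of $F$ extends inside a single component of $U_1\setminus Z$, so the Koebe space afforded by $U_0\setminus U_1$ is never lost under composition. The heart of \ref{lemPsin:b} is the geometric estimate $m(S_\ell)\mc t_n^{1-2^{-\ell}}$, together with the sharper $m(S_\ell\setminus S_{\ell+1})\lll n\,t_n 2^{-\ell}$ for large $\ell$; these, combined with the fact that each passage through $Z$ costs $p\mc n$ units of time, are exactly what produces the $\sqrt j$ rate rather than a faster one, the worst case occurring at $j\mc n^2$.
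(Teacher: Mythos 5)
Your part~\ref{lemPsin:a} is the paper's argument, spelled out in more detail: branches of $F$ off $V$ are full onto $U_1$ and $\Delta$-extensible inside $U_1$ (via \rlem{Ut2} for the $\phi_1$-part and pullback through the $\chi$-branch, whose image is a whole component of $U_1\setminus Z$), the extensions are pulled back through earlier full branches and through the affine map $A$, and Koebe gives uniform distortion. That is fine. Part~\ref{lemPsin:b} also follows the paper's overall strategy (decompose $\tau=pm'+r$, use the $e^{n\eta}$ expansion of the passage through $J_0$, the exponential tail of the return time from \rlem{ER}, and $n+j/n\geq 2\sqrt j$), but it contains one genuine gap.

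The gap is the claimed bound $m(\{m'=\ell\})\lll n\,t_n2^{-\ell}$ for $\ell\ggg\log n$. You derive it from $m(S_\ell)\mc t_n^{1-2^{-\ell}}$ by taking the difference $m(S_\ell)-m(S_{\ell+1})$; but both quantities are known only up to multiplicative constants, and for $\ell\ggg\log n$ they are both $\mc t_n$, so their difference is not controlled by this information (a priori it could be as large as a constant times $t_n$, with no decay in $\ell$). This bound is load-bearing: for $j\ggg n^2$ the factor $e^{-n\eta/2}$ alone gives only $e^{-cn}\not\lll e^{-\alpha\sqrt j}$, so you genuinely need decay in $\ell\mc j/n$. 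The repair is easy and is exactly what the paper uses: by \rlem{phi}\ref{lemphi:c}, $\log|D\phi_1|>1/2$ on $Z\setminus V$, and since each component of $J_{\ell-1}=\{m'=\ell\}$ is mapped diffeomorphically onto a component of $J_0$ by $\phi_1^{\ell-1}$ through $Z\setminus V$, one gets $m(\{m'=\ell\})\lll e^{-\ell/2}|J_0|\lll e^{-\ell/2}t_n^{1/2}$. Substituting this for $n\,t_n2^{-\ell}$ in your interpolation, the contribution at $\ell\mc j/p$ becomes $\lll e^{-\ell/4}t_n^{1/4}e^{-n\eta/2}\lll e^{-c(j/n+n)}\lll e^{-\alpha\sqrt j}$, and the rest of your computation goes through unchanged. (The paper packages the same two ingredients as the single estimate $|D\chi|\ggg e^{\alpha'(n+k)}$ on $J_k$ and then sums a geometric series in $k$; your geometric-mean interpolation is an equivalent bookkeeping.) The finer asymptotics $m(S_\ell)\mc t_n^{1-2^{-\ell}}$, and the worst case $j\mc n^2$, are correct observations but are not needed once the derivative bound is used.
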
 

\begin{proof}
  To prove~\ref{lemPsin:a}, it is enough to show  
  that  branches of $F$ other than $V$ are mapped onto $U_1$ and are
  $\Delta$-extensible, with extension contained in $U_1\setminus V$. Let us do this. 
  By \rlem{Ut2}, this holds for branches of $\phi_1$ contained in $U_1\setminus Z$. 
  Each branch of $\chi$ is mapped diffeomorphically by $\chi$ onto 
  a connected component of $U_1 \setminus Z$ and~\ref{lemPsin:a} follows.

  Now we prove~\ref{lemPsin:b}.
  Set $I_0 := Z$ and, inductively,  
  $$
    I_{k+1} := \phi_1^{-1}(I_k) \cap Z.
  $$
  These are nested intervals whose intersection (over all $k$) is $V$. 
  Denote by $J_k$ the pair of symmetric intervals $I_k \setminus I_{k+1}$. 
  On each $J_k$, $\chi = \phi_1^{k+1} =  f^{p(k+1)}$. 
  By \rlem{phi},
  $\log|D\phi_1| \ggg n$ on $J_0$
  and 
  $\log|D\phi_1| > 1/2$ on $J_k$.
  Thus with some $\alpha' > 0$, on $J_k$, 
  $$
    |D\chi| \ggg \exp(\alpha'(n + k))
    .
  $$
  If we take $\alpha'$ small enough, we also have,
  by \rlem{ER}, 
  \[
    m(\{x \in U_1 \setminus Z \colon \tau(x) = j\}) \lll \exp(-\alpha' j)
    .
  \]
  Since $F = \phi_1 \circ \chi = \phi_1 \circ f^{p(k+1)}$ on $J_k$,
 then 
    $$m(\{ x \in J_k \colon \tau(x) = j \}) = 0
    \quad \text{if $k \geq \lfloor j / p \rfloor$ }
    $$
    while, if $k \leq \lfloor\frac{j}p \rfloor -1$, 
  \begin{align*}
    m(\{ x \in J_k \colon \tau(x) = j \})
    & \lll \exp\bigl( - \alpha' (n + k) -\alpha'(j-p(k+1))\bigr)
    .
\end{align*}
Observe that, letting  $k$ go from $1$ to $\lfloor j / p \rfloor -1$, the above forms a geometric sequence in $k$ with ratio $\exp(\alpha'(p-1))$. Its sum is approximated (to within a multiplicative constant) by its maximal term, that is with $k = \lfloor j / p  \rfloor -1$.
    Hence
  \begin{align*}
    m(\{ x \in Z \setminus V \colon \tau(x) = j \})
    & =
    \sum_{k\geq 0} 
    m(\{ x \in J_k \colon \tau(x) = j \}) 
    \\
    & =
    \sum_{k=0}^{\lfloor j / p \rfloor -1}
        m(\{ x \in J_k \colon \tau(x) = j \}) 
        \\
    & \lll \exp(-\alpha'(n + {j}/p))
    \\
    & \lll \exp ( - \alpha'( n + j / n) ) 
     \leq \exp ( - 2 \alpha' \sqrt{j} )
    ,
\end{align*}
using $n+1 \leq p \leq n + N$ to pass to the last line.
  This proves~\ref{lemPsin:b}.
\end{proof}

\begin{lem}
  For every $C > 0$, there is $\delta > 0$
  such that for all sufficiently large $n$,
  \[
    m \bigl( \{ x \in U_1 \colon f^k(x) \in V 
    \text{ for some } k \leq C t_n^{-1} \} \bigr)
    \geq \delta
    .
  \]
\end{lem}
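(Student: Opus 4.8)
The plan is to pass to the induced map $F\colon U_1\to U_1$ of \eqref{eq:F} and to show that the $f$-orbit of a definite proportion of points of $U_1$ hits the full branch $V$ within $Ct_n^{-1}$ steps. For $x\in U_1$ set $\sigma(x)=\inf\{j\geq0\colon F^j(x)\in V\}$ and, for $j\geq0$, $T_j(x)=\sum_{i=0}^{j-1}\tau(F^i(x))$ (with $T_0=0$). Since $F^i(x)\in U_1\setminus V$ for $i<\sigma(x)$, there $F(F^i(x))=f^{\tau(F^i(x))}(F^i(x))$, so $f^{T_{\sigma(x)}(x)}(x)=F^{\sigma(x)}(x)\in V$. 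Hence it suffices to prove that $m\bigl(\{x\in U_1\colon T_{\sigma(x)}(x)\leq Ct_n^{-1}\}\bigr)\geq\delta$ for all large $n$.

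First I would record a geometric tail for $\sigma$. Put $G_j=\{x\in U_1\colon F^i(x)\notin V\text{ for }0\leq i<j\}=\{\sigma\geq j\}$; this is the union of those branches of $F^j$ whose itinerary of $F$-branches avoids $V$, and $F^j$ maps each such branch diffeomorphically onto $U_1$ with distortion at most a constant $D_0$ independent of $j$ and $n$ (\rlem{Psin}\ref{lemPsin:a}). As $V$ is a branch of $F$ with $m(V)/m(U_1)\mc t_n$ (by \rlem{phi}\ref{lemphi:a} and $|U_1|\mc1$ uniformly in $n$), each branch $W\subset G_j$ obeys $m(W\cap F^{-j}V)\geq c_0t_n\,m(W)$ for a fixed $c_0>0$; thus $m(G_{j+1})\leq(1-c_0t_n)m(G_j)$, so $m(\{\sigma\geq j\})\leq(1-c_0t_n)^j m(U_1)$, and in particular $\sigma<\infty$ a.e. Next, fix a large constant $L$ (to be chosen at the end, depending only on $C$) and split $T_\sigma=S_1+S_2$ with $S_1=\sum_{i<\sigma}\tau(F^ix)\mathbf 1_{\{\tau(F^ix)\leq L\}}$ and $S_2=\sum_{i<\sigma}\tau(F^ix)\mathbf 1_{\{\tau(F^ix)>L\}}$. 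For $S_1$, use $S_1\leq L\sigma$: on $\{\sigma\leq\lfloor\frac{C}{2L}t_n^{-1}\rfloor\}$ one has $S_1\leq\frac C2t_n^{-1}$, and the geometric tail gives
\[
  \liminf_{n\to\infty}m\bigl(\{\sigma\leq\lfloor\tfrac{C}{2L}t_n^{-1}\rfloor\}\bigr)\geq\mu_1\bigl(1-e^{-c_0 C/(2L)}\bigr)>0,
\]
where $\mu_1=\lim_n m(U_1)>0$ (continuity of $\partial U_1$ in $t$, \rlem{Ut0}). For $S_2$, use a first moment: writing $\int_{U_1}S_2\,dm=\sum_{i\geq0}\int_{G_{i+1}}(\tau\mathbf 1_{\{\tau>L\}})\circ F^i\,dm$ and changing variables on each branch $W\subset G_{i+1}$ of $F^{i+1}$ — on which $F^i$ is a diffeomorphism, of distortion $\leq D_0$, onto the single $F$-branch $F^i(W)$, where $\tau$ is constant — one gets $\int_{G_{i+1}}(\tau\mathbf 1_{\{\tau>L\}})\circ F^i\,dm\lll\frac{m(G_i)}{m(U_1)}\int_{U_1}\tau\mathbf 1_{\{\tau>L\}}\,dm$. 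By \rlem{Psin}\ref{lemPsin:b}, $\int_{U_1}\tau\mathbf 1_{\{\tau>L\}}\,dm\leq\sum_{j>L}j\,m(\tau=j)\lll\sum_{j>L}j\,e^{-\alpha\sqrt j}=:\beta_L$, the tail of a convergent series, so $\beta_L\to0$ as $L\to\infty$, with all implied constants independent of $n$ and $L$. Summing $\sum_i m(G_i)\lll t_n^{-1}m(U_1)$ yields $\int_{U_1}S_2\,dm\lll\beta_L t_n^{-1}$, whence $m(\{S_2>\tfrac C2t_n^{-1}\})\lll\beta_L/C$.

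To conclude, since $\{S_1\leq\tfrac C2t_n^{-1}\}\cap\{S_2\leq\tfrac C2t_n^{-1}\}\subset\{T_\sigma\leq Ct_n^{-1}\}$, the last two estimates give
\[
  \liminf_{n\to\infty}m\bigl(\{T_\sigma\leq Ct_n^{-1}\}\bigr)\geq\mu_1\bigl(1-e^{-c_0 C/(2L)}\bigr)-C_1\beta_L/C
\]
for a constant $C_1$ independent of $L$. As $L\to\infty$, $1-e^{-c_0 C/(2L)}\mc1/L$ while $\beta_L$ decays faster than any power of $1/L$; hence for $L=L(C)$ large the right-hand side is positive, and $\delta$ may be taken to be half of it. Then $m(\{T_\sigma\leq Ct_n^{-1}\})\geq\delta$ for all $n$ large, which is the assertion.

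The step I expect to be the main obstacle is the first-moment estimate for $S_2$: one must carry out the change of variables in $\int_{G_{i+1}}(\tau\mathbf 1_{\{\tau>L\}})\circ F^i\,dm$ carefully, using that $G_{i+1}$ is precisely the union of branches of $F^{i+1}$ whose $F$-branch itinerary avoids $V$, that $F^i$ maps each such branch onto a single $F$-branch with distortion $\leq D_0$, and that $\tau$ is constant on $F$-branches; this is what separates the geometric factor $m(G_i)$ (summable, supplying the $t_n^{-1}$) from the tail factor $\beta_L$ (supplying the smallness). A secondary subtlety is balancing $n\to\infty$ against $L\to\infty$: $L$ must be large so that $\beta_L$ is small, which shrinks the event $\{\sigma\leq\lfloor\tfrac{C}{2L}t_n^{-1}\rfloor\}$ to a fraction only $\mc C/L$ of $U_1$, and the super-polynomial decay of $\beta_L$ is exactly what makes this trade-off succeed; note that a naive Markov bound on $T_\sigma$ itself (whose mean is $\mc t_n^{-1}$) would only handle large $C$, which is why the truncation at level $L$ is essential.
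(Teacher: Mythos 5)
Your proof is correct, and it takes a genuinely different route from the paper's. Both arguments pass to the induced Gibbs-Markov map $F$, observe that $V$ is a full branch of Lebesgue measure $\mc t_n$, and derive the same geometric bound (your $m(\sigma\geq j)\leq(1-c_0t_n)^j m(U_1)$ is the paper's $\nu(W_k')\leq(1-C_1|V|)^k$). Where you diverge is in converting the $F$-hitting time $\sigma$ into an $f$-hitting time $T_\sigma$. The paper appeals to the martingale/transfer-operator machinery of the appendix: it invokes \rlem{tau} to obtain $|\tau_k-k\bar\tau|_{L^2(\mu)}\lll k^{1/2}$ uniformly in $n$, and Chebyshev then shows $\nu(\tau_{ck}>k)\to0$ for a fixed small $c$, which it combines with the geometric bound via the inclusion $W_k\subset W_\ell'\cup\{\tau_\ell>k\}$. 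You instead split $T_\sigma$ at a truncation level $L$ and control the large-$\tau$ contribution by a first-moment estimate, exploiting the superpolynomial tail $m(\tau=j)\lll e^{-\alpha\sqrt j}$ of \rlem{Psin}\ref{lemPsin:b} directly; the crucial point, which you identify, is that $\beta_L$ decays faster than any power of $L^{-1}$, so the trade-off at $L=L(C)$ closes for every $C>0$. Your argument is more elementary (no appeal to Lemma~\ref{lemtau} or the coupling/transfer operator theory), at the price of needing a slightly stronger integrability hypothesis on $\tau$ than the paper does: a uniform bound on $\int\tau^{2+\eps}\,dm$ for some $\eps>0$ would already suffice for your argument, whereas the paper's route needs only $\int\tau^2\,d\mu\mc1$ together with the Gibbs-Markov decay of correlations. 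A further small simplification in your version is that, since you only track the $f$-orbit until its first visit to $V$, you avoid the paper's redefinition of $f$ on $V$; your observation that $F^i(x)=f^{T_i(x)}(x)$ for $i\leq\sigma(x)$ with the original $f$, and that the intermediate $f$-iterates never enter $V$, makes this clean.
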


\begin{proof}
  We redefine $f$ on $V = V_n$ so that $f \colon V \to U_1$ is
  the affine homeomorphism $A$ as in~\eqref{eq:F}. This does not change when a point first enters $V$ (noting $k=0$ is possible)
  and does not change $F$.
  With this modification, $F$ is the induced map for
  $f$ with inducing time $\tau$.
  Let $\tau_k = \sum_{j=0}^{k-1} \tau \circ F^j$.

  Let $\nu$ be the Lebesgue measure on $U_1$, normalized so that $\nu(U_1) = 1$.
  Let
  \begin{align*}
    W_k  & = \{x \in U_1 : f^j(x) \not \in V \text{ for all } j \leq k \}, \\
    W'_k & = \{x \in U_1 : f^j(x) \not \in V \text{ for all } j \leq \tau_k \}
    .
  \end{align*}
  By \rlem{Psin}, all branches of $F$ are full and have universally bounded
  distortion. Consequently, the set of points not entering $V$ in $k$ iterates of
  $F$ is exponentially small, namely
  \[
    \nu (W'_k) 
    \leq (1-C_1 |V|)^k
    ,
  \]
  where $C_1$ is a universal constant.
  Now, $W_k \subset W'_\ell \cup \{ \tau_\ell > k \}$ for all $\ell \geq 0$.
  Hence
  \begin{align*}
    \nu(W_k) 
    & \leq \nu(W'_\ell) + \nu(\{\tau_\ell > k\})
    \\ & \leq (1-C_1 |V|)^\ell + \nu(\{\tau_\ell > k\})
    .
  \end{align*}
  We claim that there exists a constant $c>0$ such that
  $\nu(\{\tau_{c k} > k\}) \to 0$ as $k \to \infty$,
  uniformly in $n$. Suppose that the claim is true.
  Setting $k = C t_n^{-1}$ and $\ell = ck$, and using $|V| \mc t_n$,
  we obtain
  \[
    \nu(W_{C t_n^{-1}}) 
    \leq (1- a t_n)^{b t_n^{-1}} + o(1)
    = e^{-ab} + o(1)
    .
  \]
  with some $a,b > 0$. This implies the result.
  
  It remains to verify the claim. The map 
  $F \colon U_1 \to U_1$ is Gibbs-Markov with full images.
  By Lemmas~\ref{lemMisEstF} and~\ref{lemPsin},
  the expansion and distortion bounds of $F$ can be chosen
  independent of $n$. Let $\mu$ be the $F$-invariant
  absolutely continuous probability measure on $U_1$,
  and let $\bar{\tau} = \int \tau \, d\mu$.
  Observe that $\tau$ is constant on the branches of $F$,
  and by \rlem{ER}, $|\tau|_{L^2(\mu)} \mc 1$.
  It is standard (see \rlem{tau}) that
  \[
    |\tau_k - k \bar{\tau}|_{L^2(\mu)}
    \lll k^{-1/2}.
  \]
  It is also standard that $d\mu / d\nu \mc 1$, so
  $|\tau_k - k \bar{\tau}|_{L^2(\nu)} \lll k^{-1/2}$,
  which implies the claim.
\end{proof}

Let $a>0$ and take $C < a/2$. By the preceding lemma, there is a set of measure $\delta >0$ of points which enter $V$ in fewer than $at^{-1}/2$ iterates. Applying \rlem{Wlambda}, $\sbar_{t_n, a t_n^{-1}}
\, 1_{W_\Lambda}(x) \geq (1 - \eps)/2$
    for every $x$ in this set, provided $n$ is large enough. This proves \rprop{DisconReduc} with $\alpha_0 = \delta(1-\eps)/2.$ 

\section{Persistence of statistical stability} \label{sectCon}

In this section we prove \rthm{Con}.
Our strategy is as follows:
\begin{itemize}
  \item {}[\rprop{ContInd} and \S\ref{sec:aNUE}]
    We construct a particular inducing scheme
    for $f_t$, which we use to approximate $f_t$
    with a nonuniformly expanding map $\hf_t$
    which admits an absolutely continuous invariant
    probability measure $\hmu_t$. The construction is such that
    $\hf_0 = f_0$ and $\hmu_0 = \mu_0$.
    The map $\hf_t$ has uniform in $t$ bounds on return times,
    expansion and distortion.
    Further, $\hf_t$ agrees with $f_t$ everywhere except on a set of Lebesgue measure
    of order $t$.
  \item {}[\rlem{AprHat}] Suppose that $\pot \colon I \to \R$ is Lipschitz.
    We show that for all $n \geq 1$,
    \begin{equation}
      \label{eq:all9}
      \int_I
        \Bigl| \frac{1}{n} \sum_{j=0}^{n-1} \pot \circ \hf_t^j
        - \int \pot \, d\hmu_t \Bigr|
      \, dm
      \leq C n^{-1/2} |\pot|_\Lip,
    \end{equation}
    where the constant $C$ does not depend on $t$ and
    $|\cdot|_\Lip$ is the Lipschitz norm,
    \[
      |\pot|_\Lip
      = \sup_{x \in I} |\pot(x)|
      + \sup_{x \neq y \in I} \frac{|\pot(x) - \pot(y)|}{|x-y|}
      .
    \]
  \item {}[\rlem{TrTr}] We show that $f_t$ agrees with $\hf_t$ on time horizons
    smaller than $t^{-1}$, namely that if $n(t) = o(t^{-1})$, then
    \[
      \lim_{t \to 0}
      m \{ x \in I \colon f_t^j(x) = \hf_t^j(x)
        \text{ for all } j \leq n(t) \}
      = 1
      .
    \]
    For a bounded observable $\pot \colon I \to \R$, this naturally implies that
    \begin{equation}
        \label{eq:potffhat}
      \lim_{t \to 0}
      \int_I
        \Bigl|
        \frac{1}{n(t)} \sum_{j=0}^{n(t)-1} \pot \circ \hf_t^j
        -\frac{1}{n(t)} \sum_{j=0}^{n(t)-1} \pot \circ f_t^j 
        \Bigr|
      \, dm \\
      = 0
      .
  \end{equation}
  \item {}[\rlem{StatStab}]
    Using continuity of the map $(x,t) \mapsto f_t(x)$ and \eqref{eq:all9},
    we prove that
    \begin{equation}
        \label{eq:potpotmutmu}
      \int \pot \, d\hmu_t \to \int \pot \, d\mu_0
      \quad \text{as} \quad
      t \to 0
      .
  \end{equation}
\end{itemize}
  From this point, all is straightforward. By~\eqref{eq:all9} and~\eqref{eq:potpotmutmu}, if $n(t) \to \infty$ as $t \to 0$,

      \[
          \lim_{t\to 0} \int_I
        \Bigl| \frac{1}{n(t)} \sum_{j=0}^{n(t)-1} \pot \circ \hf_t^j
        - \int \pot \, d\mu_0 \Bigr|
      \, dm  = 0.
  \]
Combining this with~\eqref{eq:potffhat}, we obtain that for all
    Lipschitz $\pot \colon I \to \R$ and $n(t)$
    with $\lim_{t \to 0} n(t) = \infty$ and
    $n(t) = o(t^{-1})$,
    \[
      \lim_{t \to 0}
      \int_I
        \Bigl| \frac{1}{n(t)}
        \sum_{j=0}^{n(t) - 1} \pot \circ f_t^j
        - \int \pot \, d\mu_0 \Bigr|
      \, dm
      = 0
      .
    \]
    This gives the result of \rthm{Con} for Lipschitz observables.
  Generalisation to the class of continuous observables is
    automatic: every continuous observable
    can be arbitrarily well approximated by a Lipschitz
    observable in the uniform topology.     

In the rest of this section we implement the strategy above.
Where there is no ambiguity, we suppress the dependence on $t$.

\begin{rem}
  \label{remSquareRootT}
  One of the main difficulties in our proof is the construction of
  the approximating map $\hf_t$, which allows a suitable inducing scheme
  and which coincides with $f_t$ everywhere except on a set of Lebesgue measure of order $t$.

  The proof for time horizons of order $o(t^{-1/2})$ can be made
  significantly simpler than that for $o(t)$, as we only have to avoid a set of size $t^{-1/2}$.
  In this case, $\hf_t$ can be taken to be equal to $f_t$ everywhere outside
  the central branch $Z$ of the first return map to $U_0$, if such a branch exists; on
  $Z$, we can define $\hf_t$ as an affine bijection between $Z$ and $U_1$.

  Then it can be verified, using results of \rsec{Prel} and \rsec{SetUp},
  that the first return map to $U_1$
  under $\hf_t$ has all of its branches full with universally bounded distortion,
  and a uniform in $t$ exponential bound on return times. Similarly to \rlem{phi},
  one can show that $|Z| \lll t^{1/2}$.
  With this, the strategy above works, rendering unnecessary most of
  \rsec{IndScheme}.
\end{rem}

\subsection{Inducing scheme}
\label{sectIndScheme}

Recall that $\phi_1 \colon U_1 \to U_1$ is the first return map
under $f$. It is constructed to have countably many branches, and
all non-central branches (i.e.\ not containing $0$)
are mapped by $\phi_1$ to $U_1$ diffeomorphically.

Let $V = (-Ct, Ct)$, where $C$ is the constant from \rlem{BigDer}.
Then $|D\phi_1| > 1000$ on $U_1 \setminus V$.

\begin{prop}
  \label{propContInd}
  For small enough $t$, there exists a partition $\cP$ of $U_1$
  into open intervals, modulo a zero measure set.
  Each interval $J \in \cP$ is coloured blue or red, and there is a function 
  $\rho \colon U_1 \to \N \cup \{0\}$, constant on each $J$ with value $\rho(J)$,
  such that:
  \begin{enumerate}[label=(\alph*)]
    \item\label{propContInd:x} if $J$ is red, then $f^{\rho(J)} (J) \subset V$ ;
    \item\label{propContInd:xx} if $J$ is blue, then $\rho(J) > 0$ and
      $f^{\rho(J)} \colon J \to U_1$ is a
      diffeomorphism with universally bounded distortion;
    \item\label{propContInd:y} $m(\cup\{J \in \cP \colon J \text{ is red}\}) \lll t$;
    \item\label{propContInd:z} $\int_{U_1} \rho^2 \, dm \mc 1$.
  \end{enumerate}
\end{prop}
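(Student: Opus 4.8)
The plan is to obtain $\cP$ and $\rho$ by iterating the first-return map $\phi_1 \colon U_1 \to U_1$ of $f = f_t$ and stopping each point as soon as its $\phi_1$-orbit either (i) lands in $V$, in which case the point is coloured red and $\rho$ records the corresponding number of $f$-iterates, or (ii) lies in a subinterval on which some iterate of $\phi_1$ is a diffeomorphism onto all of $U_1$ with uniformly bounded distortion, in which case it is coloured blue. Away from the critical point this is straightforward: by \rlem{Ut2} every non-central branch of $\phi_1$ is full and $\Delta$-extensible, hence has distortion at most $2$, and by \rlem{BigDer} we have $|D\phi_1| > 1000$ on $U_1 \setminus V$; in particular every non-central branch has length at most $|U_1|/1000$. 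All the work is therefore concentrated on the central branch $Z \ni 0$ of $\phi_1$, which is present precisely when $0$ returns to $U_0$ and on which $\phi_1 = f^{\ell}$ is unimodal with turning point $0$. When there is no central branch, $\cP = \{\text{branches of }\phi_1\}$, all coloured blue, already works: there is no red part, and (d) is immediate from the exponential return-time tail of \rlem{ER}.

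Handling $Z$. For small $t$ one has $V \subset Z$ and $|Z| \lll t^{1/2}$ (as in \rlem{phi}, using $|D\phi_1(x)| \ggg |x|/t$ near $0$). Although $\phi_1$ is monotone on each of $Z^- \setminus V$ and $Z^+ \setminus V$, it has distortion of order $t^{-1/2}$ there, so these must be refined: we subdivide each side dyadically into pieces $D_k \approx (2^k Ct, 2^{k+1}Ct)$, $k = 0, \dots, O(\log t^{-1})$, on each of which the quadratic tip contributes bounded distortion so that $\phi_1|_{D_k}$ has distortion at most $2$; here $\phi_1(D_k)$ is an interval of length $\mc 2^{2k}t$ at distance $\mc 2^{2k}t$ from $\phi_1(0)$. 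From each $\phi_1(D_k)$ we resume iterating $\phi_1$; since the non-central branches are full, the resulting sub-branches obey the usual first-return branch combinatorics, now seeded by the small interval $\phi_1(D_k)$: at each stage we cut off a blue sub-branch whenever a further full diffeomorphism onto $U_1$ is realised, and a red sub-branch, mapping exactly into $V$, whenever the orbit reaches $V$. (If $\phi_1(0)$ itself lies in $Z$ --- possible only for parameters very close to the super-attracting ones of \rsec{Discon} --- the orbit of $0$ re-enters $Z$ and the decomposition is applied recursively with further dyadic refinement; the orbit leaves $Z$ after finitely many steps.) Grouping $U_1$ by these itineraries, and adjoining to the red part the Lebesgue-null residual set of points whose $\phi_1$-orbit never leaves $Z \setminus V$, yields the countable partition $\cP$ with $\rho$ constant on each piece; properties (a) and (b) are then built in, apart from the distortion claim in (b).

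The estimates. For (c), the red set consists of $V$ together with the points funnelled into $V$ along orbits lingering in $Z \setminus V$; since $|D\phi_1| > 1000$ off $V$, the $k$-th stage preimages of $V$ have measure at most $(C'/1000)^k\,|V|$ --- passage through $Z$ contributing only a bounded multiplicity from its two monotone branches --- so $m\bigl(\bigcup\{J \in \cP \colon J \text{ red}\}\bigr) \lll |V| \mc t$, and the same expansion annihilates the residual set. For the distortion claim in (b) and for (d): on a blue branch $J$ the map $f^{\rho}$ factors as a dyadic-tip piece of $\phi_1|_Z$ (if any; distortion $\le 2$) followed by a composition $\phi_1^m$ through full $\Delta$-extensible branches $B_0, \dots, B_{m-1}$ whose images all avoid $V$; by the Koebe estimate \rlem{lemExt}, the log-distortion of $\phi_1^m|_J$ is $\lll \sum_{i < m} |\phi_1^i(J)|/|B_i|$, and since $|\phi_1^{i+1}(J)| \ge 1000\,|\phi_1^i(J)|$ while $\phi_1^m(J) = U_1$, the images $\phi_1^i(J)$ shrink geometrically backward, so the sum converges, uniformly in $t$, $m$ and $J$. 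Finally (d) follows by combining this uniform distortion bound with the exponential return-time tail of \rlem{ER} for the full branches --- these contribute the dominant term $\int_{U_1 \setminus Z} r^2\,dm \mc 1$, handled as in \rlem{Psin} --- while the thin set $Z$ (of measure $\lll t^{1/2}$, with inducing times $\lll \log t^{-1}$) contributes only $o(1)$.

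The main obstacle is to make (c) compatible with (a) and (b): the neighbourhood $Z \setminus V$ of the critical point has measure $\mc t^{1/2}$, far above the red budget $\mc t$, so almost all of it must be coloured blue even though there $\phi_1$ is neither full nor boundedly distorted. This is what forces the dyadic refinement together with the ``climb-out'' iteration, and the delicate point is that the resulting compositions remain uniformly (in $t$) distorted --- which works because each step off $V$ expands by a factor $\ge 1000$, making the accumulated nonlinearities summable --- while the extra iterates spent climbing out of the deep dyadic pieces do not spoil the $L^2$ bound on $\rho$, the set involved having measure only $\mc t^{1/2}$. Once the scheme is organised in this way, the geometric-series bound on the red measure and the tail estimate for $\rho$ are routine. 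We note finally that for the weaker conclusion needed on the shorter timescales $o(t^{-1/2})$ one can, following \rrem{SquareRootT}, dispense with all of this and work instead with a map having only full branches, obtained by flattening the central branch --- which is why the present construction is required only for the optimal $o(t^{-1})$ result.
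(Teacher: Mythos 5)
Your high-level strategy is the same as the paper's: iterate $\phi_1$ and stop on reaching $V$ (red) or on realising a diffeomorphism onto $U_1$ with controlled distortion (blue). However, there is a genuine gap in the way you handle \emph{re-entry of the orbit into} $Z$, and this undermines both the distortion claim in~\ref{propContInd:xx} and the quantitative estimates~\ref{propContInd:y},~\ref{propContInd:z}.

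Your distortion argument factors $f^\rho$ on a blue branch $J$ as an initial dyadic-tip piece (distortion $\le 2$) followed by ``a composition $\phi_1^m$ through full $\Delta$-extensible branches $B_0,\dots,B_{m-1}$ whose images all avoid $V$.'' This is not available in general. Even with $\phi_1^i(J)\cap V=\emptyset$ for all $i$, an intermediate image $\phi_1^i(J)$ can lie in $Z\setminus V$, where the only branches of $\phi_1$ are the two monotone pieces $Z^\pm\setminus V$; these are neither full nor $\Delta$-extensible and have distortion of order $t^{-1/2}$. Your parenthetical recursion is keyed to the condition $\phi_1(0)\in Z$, but re-entry of $\phi_1^i(J)$ into $Z$ has nothing to do with the orbit of $0$: once $|\phi_1^i(J)|$ has grown to macroscopic size it can meet $Z$ regardless. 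If one patches this by re-doing the dyadic refinement at every re-entry, each tip passage contributes a factor close to $2$ to the distortion, and the number of such passages is not bounded over all blue branches; so uniformity in~\ref{propContInd:xx} is lost. Likewise, the asserted ``bounded multiplicity from the two monotone branches'' for the red measure, and the claim that $Z$ contributes only $o(1)$ to $\int\rho^2\,dm$, both tacitly assume a bound on the number of passages through $Z$ that you do not establish.

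The paper avoids exactly this by using the $U_0$-extensibility criterion to define blue: a branch $W$ is blue precisely when $\phi_1^{\hei(W)}\colon W\to U_1$ extends diffeomorphically onto $U_0$, which gives distortion $\le 2$ \emph{automatically} (via Koebe and the choice $|U_1|\le\theta\dist(U_1,\partial U_0)$), with no composition argument and no dependence on how many times the orbit passes near $0$. The obstructed (yellow) branches are then controlled combinatorially by the \emph{index}, which counts passages through $U_0$: Lemma~\ref{lemNahh} bounds the number of yellow/red intervals of index $\ell$ by $6^\ell$, while \rlem{BigDer} forces the sizes of those intervals to decay like $1000^{-\ell}$. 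The estimates \ref{propContInd:y} and \ref{propContInd:z} are then a geometric-series balance of $6^\ell$ against $1000^{-\ell}$. Your proposal has no analogue of the index, and the corresponding multiplicity and time bounds are exactly what you would need to make the sketched estimates rigorous. Without them, the argument as written does not close.
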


The proof of \rprop{ContInd} takes the rest of this subsection.
To simplify notation, if $W$ is a branch of $\phi_1$ intersecting $\partial V$, we consider the connected components of $W\setminus \partial V$ 
 as separate branches of $\phi_1$.
In particular, if $W'$ is a branch of $\phi_1^k$, then $\phi_1^p(W') \cap \partial V = \emptyset$ for $0\leq p<k$.

Let $\tau \colon U_1 \to \N$ be the first return time,
\[
  \tau(x) = \inf \{k \geq 1 \colon f^k(x) \in U_1 \}
  ,
\]
so $\phi_1 = f^\tau$.
Let $\tau_k = \sum_{j=0}^{k-1} \tau \circ \phi_1^j$.
Note that if $W$ is a branch of $\phi_1^k$, then
as a consequence of \rlem{Ut0}\ref{lemUt0:bdd},
for each $j \leq \tau_k(W)$ either
$f^j(W) \subset U_0$ or $f^j(W) \cap U_0 = \emptyset$.

We construct a nested sequence of partitions
$\cP_k$, $k \geq 0$, of $U_1$ into open intervals.
To each interval we assign a colour (yellow, blue or red),
an index and a height (integers).  
Let $\cP_0 = \{U_1\}$ be the trivial partition. 
      We set the height
    of its only element to $0$, index to $0$ and colour it yellow. 
   For $k \geq 1$, we construct $\cP_k$ as a refinement of $\cP_{k-1}$ inductively:
\begin{itemize}
  \item 
    We leave the blue and red intervals intact,
    with the same height and index.
  \item
    We partition each yellow $J \in \cP_{k-1}$ into the branches
    of the map $\phi_1^k \colon J \to U_1$.
    For each such new element $W$ of $\cP_k$:
    \begin{itemize}
      \item If $\phi_1^{k-1}(W) \subset V$, then we colour $W$ red. Otherwise,
        $\phi_1^{k-1}(W) \cap V = \emptyset$.
        If $\phi_1^k \colon W \to U_1$ is a $U_0$-extensible
        diffeomorphism, we colour $W$ blue.
        Otherwise we colour $W$ yellow.
      \item We set 
        \[
          \hei(W) = \begin{cases} 
            k-1, & W \text{ is red} \\
            k, & \text{otherwise}
          \end{cases}
        \]
        and
        \[
          \ind(W) = \# \{0 < j \leq \tau_{\hei(W)}(W) \colon f^j(W) \subset U_0 \}
          .
        \]
    \end{itemize}
\end{itemize}

\begin{lem}
  \label{lemNahh}
  For all \(\ell \geq 0\),
  \begin{itemize}
    \item
      \(
        \sum_{k \geq 0} \# \bigl\{ 
            J \in \cP_k \colon J \text{ is yellow with index } \ell
        \bigr\}
        \leq 6^\ell
        .
      \)
    \item
      \(
        \sup_{k \geq 0} \# \bigl\{ 
            J \in \cP_k \colon J \text{ is red with index } \ell
        \bigr\}
        \leq 6^\ell
        .
      \)
  \end{itemize}
\end{lem}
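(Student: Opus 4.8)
The statement is a combinatorial bound on how many yellow (resp.\ red) intervals of a given index $\ell$ can appear across all refinements $\cP_k$. The plan is to argue by induction on $\ell$, tracking how the index can increase from one generation of yellow interval to the next. The key structural fact is that a yellow interval $J \in \cP_{k-1}$ gets partitioned into the branches of $\phi_1^k \colon J \to U_1$, and each new child $W$ has $\ind(W) \geq \ind(J)$, with the increment controlled by how many times the orbit pieces $f^j(W)$, $\tau_{\hei(J)}(W) < j \leq \tau_{\hei(W)}(W)$, land in $U_0$. I would first establish a ``bounded branching'' lemma: among the branches of $\phi_1^k$ restricted to a yellow $J$ of index $\ell$, only a bounded number (at most $5$, say, so that with the parent we get the factor $6$) have the \emph{same} index $\ell$ as $J$ and are again yellow or red; all others have strictly larger index. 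This is where the hypotheses $|D\phi_1| > 1000$ on $U_1 \setminus V$ and the $U_0$-extensibility dichotomy enter: a branch that fails to be $U_0$-extensible (hence stays yellow, or is red) must have had its orbit obstructed by a return to $U_0$, which forces the index to jump — except for the at most two ``outermost'' branches adjacent to each relevant return configuration and the one or two branches meeting $\partial V$.

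Granting the branching lemma, the induction runs as follows. For $\ell = 0$: the only index-$0$ interval that is ever yellow is $U_1 = \cP_0$ itself (once we partition it, any child whose orbit never meets $U_0$ before returning would have to be $U_0$-extensible and hence blue — this uses that a first-return branch disjoint from $U_0$ has good extension, cf.\ \rlem{Ut2} and the construction of $\phi_1$), so both counts are $\leq 1 \leq 6^0$. For the inductive step: every yellow or red interval of index $\ell$ arises as a child of a yellow interval of index $\ell' \leq \ell$ in some generation; those with $\ell' = \ell$ number at most $5$ per parent by the branching lemma, and those with $\ell' < \ell$ are children of index-$(\ell-1)$ yellow intervals (the index goes up by exactly one at a time, since each new return to $U_0$ increments $\ind$ by one). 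Counting: the number of index-$\ell$ yellow/red intervals is at most $5 \cdot (\text{number of index-}\ell \text{ yellow parents}) + (\text{number of index-}(\ell-1)\text{ yellow intervals spawning them})$, which telescopes to the bound $6^\ell$ after using $\sum_{k} \#\{\text{index-}(\ell-1)\text{ yellow}\} \leq 6^{\ell-1}$ and that each such parent contributes at most $6$ new index-$\ell$ children (at most $5$ same-index-as-itself-plus-one yellow/red ones from the jump, consolidated appropriately). The red count is handled simultaneously since a red interval is a child of a yellow one and the same branching bound applies; the $\sup_k$ (rather than $\sum_k$) for red reflects that red intervals, once created, persist unchanged.

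The main obstacle is making the branching lemma precise: one must show that, when a yellow $J$ of index $\ell$ is subdivided via $\phi_1^k$, only boundedly many children keep index $\ell$ while remaining yellow or turning red. Since $\phi_1^k \colon J \to U_1$ covers $U_1$ by countably many branches, the ``bad'' (non-blue, same-index) branches must be localized: a branch is non-blue-same-index only if it fails $U_0$-extensibility \emph{without} an intervening $U_0$-return, which by the nestedness from \rlem{Nice} and \rlem{Ut0}\ref{lemUt0:bdd} can only happen for branches abutting $\partial U_1$ or $\partial V$ — a bounded list. Equivalently, one shows the extension of a branch (in the sense of \rlem{Ut2}, pulled back along the already-extensible $\phi_1^{k-1}|_J$) fails only when the branch is adjacent to a previously-created red interval or to $\partial U_1$, and there are at most $4$ such positions, giving the constant. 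The bookkeeping of exactly which constant ($5$ vs.\ $6$) is delicate but routine once the localization is in hand; I would be somewhat generous with the constant to keep the argument clean, since $6^\ell$ leaves room.
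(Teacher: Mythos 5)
There is a genuine gap, and it sits exactly at the combinatorial heart of the lemma. Your ``bounded branching lemma'' allows a yellow parent of index $\ell$ to produce up to $5$ yellow (or red) children \emph{of the same index} $\ell$. With that structure the quantity you are asked to bound, namely the sum over \emph{all} generations $k$ of the number of index-$\ell$ yellow intervals, cannot be finite in general: if each index-$\ell$ yellow interval could spawn several index-$\ell$ yellow children, the count would grow geometrically in $k$ and the sum over $k$ would diverge. This also shows up as circularity in your recursion, which effectively reads $N_\ell \leq 5 N_\ell + (\text{lower-index terms})$. The fact the paper actually proves (and which you need) is stronger and different in kind: \emph{every} yellow child has index \emph{strictly} greater than its parent, and for each $j \geq 1$ there are at most $4$ yellow children of index $\ell + j$ (not a bounded total number of yellow children -- there may be infinitely many, spread over the index values). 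The strict increase comes from the characterization of obstructed branches: an obstructed branch $W \subset \hat J$ is tied to a boundary point $v \in \partial \hat J$ and a time $n$ with $f^n(v) \in U_0$, which forces at least one additional visit of the orbit of $W$ to $U_0$ beyond those already counted for the parent, hence $\ind(\hat W) \geq \ell + 1$. The bound of $4$ per index value comes from the at most $2$ components $\hat J$ of $\phi_1^{k-1}(J)\setminus V$ times the at most $2$ choices of $v \in \partial\hat J$, each pair $(v,n)$ determining the index increment $j(v,n)$ uniquely. Only the single red child (the preimage of $V$) keeps the parent's index, which is what makes the recursion $S_\ell \leq 5\, S_{\ell-1}$-type close and yield $6^\ell$.

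Your secondary assumption that ``the index goes up by exactly one at a time'' is also unjustified and is not how the construction works: the index can jump by any $j \geq 1$ in a single generation, so index-$\ell$ intervals can descend from yellow ancestors of any index $\ell' < \ell$, not only $\ell - 1$. The paper's recursion accommodates this because the per-parent bound is $4$ for \emph{each} target index separately. Your localization heuristic (obstruction only near $\partial U_1$, $\partial V$, or previously created red intervals) is pointing in roughly the right direction -- obstruction is indeed tied to the boundary of $\hat J$ -- but without converting ``obstructed'' into ``index strictly increases'' the counting cannot close.
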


\begin{proof}
  Suppose that $J \in \cP_{k-1}$ is yellow with index $\ell$.
  In $\cP_k$ it is partitioned into subintervals.
  We claim that among these:
  \begin{enumerate}[label=(\alph*)]
    \item\label{lemNahh:red} there is at most \(1\) red interval, its index is $\ell$;
    \item\label{lemNahh:yellow} all yellow intervals have index at least $\ell+1$, and 
      there are at most \(4\) of them with index \(\ell + j\) for each \(j \geq 1\).
  \end{enumerate}
  A recursive estimate then implies that the number of yellow intervals contributing to  the above sum is bounded by $6^\ell$. The same estimate holds then for red intervals and the  result follows. We justify the claim now. 
  
  To each branch of $\phi_1^k$ contained in $J$ corresponds a branch of
  the restriction $\phi_1 \colon \phi_1^{k-1}(J) \to U_1$. The red interval corresponds to $V$ intersected with $\phi_1^{k-1}(J)$. The statement of \ref{lemNahh:red} is immediate.
  
  Let $\hat J$ be a connected component of $\phi_1^{k-1}(J)\setminus V$.
  Let $W$ be a branch of the restriction $\phi_1 \colon \hat J \to U_1$
  with $\tau = n$ on $W$. To $W$ corresponds the element $\hat W := \phi_1^{-(k-1)}(W) \cap J$ of $\cP_k$,
  which is yellow or blue.
  
   We call $W$ \emph{unobstructed} if 
  $f^n\colon W \to U_1$ is a diffeomorphism and there is
  an open interval $W_0 \subset \hat J$, compactly containing $W$,
  such that $f^n\colon W_0 \to U_0$ is a diffeomorphism.
  Otherwise $W$ is obstructed. Note that obstruction depends on $\hat J$ and that $\hat W$ can only be yellow if $W$ is obstructed. 
            
  Let us examine the case when $W$ is obstructed.
  There are $w \in \partial W$ and $v \in \ol{\hat J} \setminus W$ with $[w,v] \cap W = \emptyset$ 
  such that (noting $v$ and $w$ may coincide)
  \begin{itemize}
      \item $f^n$ is monotone on $W \cup [w,v]$,
      \item $f^n([w,v])$ does not contain a connected component of $U_0\setminus {U_1},$
    \item either $Df^n(v) = 0$ or $v \in \partial \hat J$.
  \end{itemize}      
  Since $f^n([w,v])$ does not contain a connected component of
  $U_0\setminus {U_1}$, it follows 
          (via \rlem{Ut0}\ref{lemUt0:bdd})
  that $f^p([w,v])$ does not
  contain a point of $\partial U_0 \cup \partial U_1$
  for all $0\leq p < n$. This implies that
  $f^p([w,v]) \cap U_1 = \emptyset$ for all $0 < p <n$.
  Therefore $Df^n(v) \ne 0$, so $v \in \partial \hat J$. 

  As $f^n$ is monotone on $W \cup [w,v]$, 
      there is a one-to-one correspondence between obstructed branches $W \subset \hat{J}$ of $\phi_1$ and a subset of the set of pairs $(v,n) \in \partial \hat{J} \times \N$ 
      for which $f^n(v) \in U_0$. 
      For each such $W$ and associated $(v,n)$, there is a unique 
      $j(v,n) := \#\{0\leq p \leq n : f^p(v) \in     U_0\}$. 
  Moreover, for $0 \leq p \leq n$, either $f^p(W \cup [w,v]) \subset U_0$ or
  $f^p(W \cup [w,v]) \cap U_0 = \emptyset$, from which it follows that 
  $$j(v,n) = \#\{0\leq p \leq n : f^p(W) \subset U_0\}.$$ 
  Hence to each yellow element $\hat W \subset J$ in $\cP_k$, there is a unique obstructed branch $W$ with associated $\hat J$ and pair $(v,n)$. The index of $\hat W$ is $\ell + j(v,n)$.
      With at most two ways to choose $\hat{J}$ as a connected component of $\phi_1^{k-1}(J)\setminus V$,  and two possibilities for $v \in \partial \hat J$,  the claim and \ref{lemNahh:yellow} follow.
\end{proof}

\begin{lem}
  \label{lemAnfg}
  \[
    \sup_{n \geq 0} \sum_{\substack{J \in \cP_n , \\ J \text{ is red}}} |J|
    \lll t
    \qquad \text{and} \qquad
    \sum_{n \geq 0} \sum_{\substack{J \in \cP_n , \\ J \text{ is yellow}}} |J|
    \lll 1
    .
  \]
\end{lem}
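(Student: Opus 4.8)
The plan is to split each of the two sums according to the \emph{index} of the intervals, count the intervals of a given index via \rlem{Nahh}, and bound the measure of a single interval of index $\ell$ by a quantity decaying geometrically in $\ell$ at a rate beating $6$.

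Fix a yellow interval $W\in\cP_k$ of index $\ell$, so that $\ell=\#\{0<j\le\tau_k(W)\colon f^j(W)\subset U_0\}$. I first claim that the first return map $\phi_0$ to $U_0$ can be iterated $\ell$ times on $W$, that $\phi_0^p(W)\subset U_0\setminus\overline V$ for $0\le p<\ell$, and that $\phi_0^\ell(W)=\phi_1^k(W)\subset U_1$. This is bookkeeping with the colouring rules of \rsec{IndScheme}. Since for each $j\le\tau_k(W)$ either $f^j(W)\subset U_0$ or $f^j(W)\cap U_0=\emptyset$, the successive visits of $W$ to $U_0$, occurring at times $0=j_0<j_1<\cdots<j_\ell=\tau_k(W)$, satisfy $\phi_0^p(W)=f^{j_p}(W)$. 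As $W$ and all its ancestors in $\cP_0,\dots,\cP_{k-1}$ are yellow, $\phi_1^q(W)\cap V=\emptyset$ for $0\le q\le k-1$; and a visit of $W$ to $U_0$ at a time $j_p<\tau_k(W)$ is either a return to $U_1$, hence equal to $\phi_1^q(W)$ for some $q<k$ and thus disjoint from $\overline V$ (using the convention that branches of $\phi_1^k$ do not straddle $\partial V$), or lies in $U_0\setminus U_1\subset U_0\setminus\overline V$. Since moreover $\phi_0^\ell(W)=f^{\tau_k(W)}(W)=\phi_1^k(W)\subset U_1$, the claim follows.

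Granting the claim, \rlem{BigDer} with $j=0$ gives $|D\phi_0|\ge1000$ at every point of each $\phi_0^p(W)$, $0\le p<\ell$ (these intervals avoid $\overline V=[-Ct,Ct]$), so the chain rule gives $|D\phi_0^\ell|\ge1000^\ell$ on $W$, and hence
\[
  |W|\le1000^{-\ell}\,|\phi_0^\ell(W)|\le1000^{-\ell}\,|U_1|\lll1000^{-\ell}.
\]
For a red interval $W\in\cP_n$ of index $\ell$ and height $k-1$ the same reasoning applies verbatim, except that $\phi_0^\ell(W)=\phi_1^{k-1}(W)\subset V$, giving
\[
  |W|\le1000^{-\ell}\,|V|\lll1000^{-\ell}\,t;
\]
the one degenerate red interval $V\in\cP_1$ has index $0$, and there the bound reads $|V|\lll t$, which holds.

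Finally I sum over the index. By \rlem{Nahh} there are at most $6^\ell$ yellow intervals of index $\ell$ among all the $\cP_k$ together, and at most $6^\ell$ red intervals of index $\ell$ in any single $\cP_n$. Since $6/1000<1$,
\[
  \sum_{n\ge0}\sum_{\substack{J\in\cP_n\\ J\text{ is yellow}}}|J|
  \lll\sum_{\ell\ge0}6^\ell\cdot1000^{-\ell}\lll1
  \qquad\text{and}\qquad
  \sup_{n\ge0}\sum_{\substack{J\in\cP_n\\ J\text{ is red}}}|J|
  \lll\sum_{\ell\ge0}6^\ell\cdot1000^{-\ell}\,t\lll t,
\]
with implied constants independent of $n$ and $t$. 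The main obstacle is the claim of the second paragraph — that an interval of index $\ell$ visits $U_0$ exactly $\ell$ times and that all but the last of those visits keeps clear of $\overline V$ — which requires unwinding the colouring rules of \rsec{IndScheme} and the no-straddling-$\partial V$ convention with some care; the expansion estimate \rlem{BigDer} and the summation are then routine.
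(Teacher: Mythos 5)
Your proof is correct and follows essentially the same approach as the paper's: bound $|J|$ by $1000^{-\ind(J)}$ (times $|V|$ if red) using the fact that $\phi_1^{\hei(J)}|_J$ factors as $\phi_0^{\ind(J)}$ through visits to $U_0\setminus\overline V$ and applying Lemma~\ref{lemBigDer}, then sum over the index using Lemma~\ref{lemNahh}. The paper states the derivative estimate $|D\phi_1^h|\geq 1000^\ell$ more tersely, whereas you spell out the bookkeeping identifying $\phi_1^h$ with $\phi_0^\ell$ on $J$ and verifying the intermediate orbit avoids $\overline V$; this is the same argument, made explicit.
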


\begin{proof}
  Suppose that $J \in \cP_n$ is an interval with index $\ell$ and height $h$.
  By \rlem{BigDer}, the first return map to $U_0$, restricted to $U_0 \setminus V$,
  is expanding by a factor of at least $1000$. By construction, $\phi_1^k(J)$
  does not intersect $V$ for $k < h$. Thus:
  \begin{itemize}
    \item $|D \phi_1^h| \geq 1000^\ell$ on $J$, and $\phi_1^h(J) \subset U_1$,
      so 
      \[
        |J| \lll 1000^{-\ell}
        ;\]
    \item moreover, if $J$ is red, then $\phi_1^h(J) \subset V$,
      so 
      \[
        |J| \lll 1000^{-\ell} |V| \lll 1000^{-\ell} t
        .
      \]
  \end{itemize}
  By \rlem{Nahh}, $\cP_n$ has at most $6^\ell$ red intervals of index $\ell$,
  thus
  \[
    \sum_{\substack{J \in \cP_n , \\ J \text{ is red}}} |J|
    \lll \sum_{\ell \geq 0} 6^\ell 1000^{-\ell} t
    \lll t
    .
  \]
  The result for red intervals follows. The argument for yellow intervals is similar.
\end{proof}

Let $\cP = \vee_n \cP_n$. By \rlem{Anfg}, $\cP$ is a partition of
$U_1$ into open intervals (blue and red), modulo a zero measure set.
For $J \in \cP$, let $\rho(J) = \tau_{\hei(J)}$. This defines $\rho \colon U_1 \to \N$
with value $\rho(J)$ on each $J \in \cP$. 

By construction, $\rho$ satisfies 
\ref{propContInd:x}, \ref{propContInd:xx} and \ref{propContInd:y} of \rprop{ContInd}.
It remains to prove \ref{propContInd:z}.

\begin{lem}
  $\int_{U_1} \rho^2 \, dm \mc 1$.
\end{lem}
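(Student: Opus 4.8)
The plan is to treat the two inequalities separately. The lower bound $\int_{U_1}\rho^2\,dm\ggg 1$ is immediate: every blue interval $J$ satisfies $\rho(J)=\tau_{\hei(J)}\geq\hei(J)\geq 1$ on $J$, while by \rlem{Anfg} the red intervals of $\cP$ cover only a set of measure $\lll t$, so $\int_{U_1}\rho^2\,dm\geq m(U_1)-\lll t\ggg 1$ once $t$ is small.

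For the upper bound I would show $\int_{U_1}\rho^2\,dm=\sum_{J\in\cP}\rho(J)^2|J|\lll 1$ with the implied constant independent of $t$, by grouping the sum according to $\ell:=\ind(J)$ and bounding $\sum_{J:\ind(J)=\ell}\rho(J)^2|J|$ by a geometrically decaying sequence in $\ell$. Three ingredients feed into this. First, by \rlem{Nahh} there are $\lll 6^\ell$ intervals $J\in\cP$ with $\ind(J)=\ell$. Second, exactly as in the proof of \rlem{Anfg}, the $\ell$ first returns to $U_0$ contained in the orbit segment of length $\rho(J)$ all avoid $V$ (because $\phi_1^k(J)\cap V=\emptyset$ for $k<\hei(J)$), so $|Df^{\rho(J)}|\geq 1000^\ell$ on $J$ and hence $|J|\lll 1000^{-\ell}$. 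Third, by \rlem{ER} the first-return time to $U_1$ — and a fortiori the first-return time to $U_0$, since $U_1\subset U_0$ — has exponential tails with rate uniform in small $t$. Writing $\rho(J)=\sum_{i=1}^\ell g_i$ as a sum of consecutive first-return-to-$U_0$ times and using the Cauchy--Schwarz bound $\rho(J)^2\leq\ell\sum_i g_i^2$, these three facts should combine to give $\sum_{J:\ind(J)=\ell}\rho(J)^2|J|\lll\ell^2\gamma^\ell$ for some $\gamma<1$, and summing over $\ell$ yields the claim.

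The delicate point — and the step I expect to be the main obstacle — is making the size bound and the return-time tails cooperate when $\rho(J)$ is large relative to $\ell$. \rlem{MisEstF} supplies an extra expansion factor $\gtrsim\lambda^{g_i}$ for every excursion that starts at distance $\gtrsim\eps_0$ from the critical point ($\eps_0$ a fixed constant), which is precisely what converts the exponential tail of $g_i$ into geometric smallness of $|J|$; but the \emph{near-critical} excursions receive no such factor, and their length is bounded only by $\lll\log(1/t)$ (this cap is exactly the estimate established inside the proof of \rlem{BigDer}). Used naively this only yields $\int_{U_1}\rho^2\,dm\lll(\log(1/t))^{C}$, which is not uniform in $t$. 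The way around it is to observe that a near-critical excursion starts within distance $\lll t$ of $\partial V$, and that the set of such starting points, pulled back by the first-entry and first-return maps to $U_0$ (which have bounded distortion on their branches by \rlem{ExtExp} and \rlem{Ut2}), has Lebesgue measure $\lll t$; this extra factor of order $t$ per near-critical visit beats the logarithmic loss. Carrying out this bookkeeping while keeping track of the \rlem{Nahh} count of index-$\ell$ intervals and of $\hei(J)$ is where the technical work lies.
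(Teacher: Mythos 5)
Your lower bound is fine, and your general instinct for the upper bound (group by index, trade the expansion factor $1000^{\ell}$ against the count of intervals of index $\ell$ and the exponential tails of return times) is in the right spirit. But the counting step is wrong: \rlem{Nahh} bounds the number of \emph{yellow} and \emph{red} intervals of index $\ell$ by $6^{\ell}$, whereas the elements of the terminal partition $\cP$ over which you sum are \emph{blue} and red, and the number of blue intervals of a fixed index is infinite. Indeed, already in $\cP_1$ every full, $U_0$-extensible branch of $\phi_1$ whose excursion meets $U_0$ only at the return time itself is a blue interval of index $1$, and there are countably many such branches (one for arbitrarily large return times $\tau$). So the bound $\sum_{J\colon \ind(J)=\ell}\rho(J)^2|J|\lll \ell^2\gamma^\ell$ cannot be obtained by multiplying a per-interval estimate by a count of $6^{\ell}$; with infinitely many terms and unbounded $\rho(J)$ the sum must be handled as an integral. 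The repair is to push the sum one generation up: for each \emph{yellow} ancestor $J'$ of index $i$ and height $k$, the contribution of all its descendants' $k$-th return time is $\int_{J'}\tau^2\circ\phi_1^{k}\,dm$, which a change of variables (using $|D\phi_1^{k}|\geq 1000^{i}$ on $J'$ and $\phi_1^k(J')=U_1$) bounds by $1000^{-i}\int_{U_1}\tau^2\,dm\lll 1000^{-i}$; the count $6^{i}$ of yellow intervals of index $i$ from \rlem{Nahh} then applies legitimately, and Minkowski's inequality in $L^2$ over the index $i$ (writing $\rho=\sum_i\rho_i$, with $\rho_i$ the return time contributed by the yellow ancestor of index $i$) finishes the proof. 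This is what the paper does.

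Your third paragraph misdiagnoses the difficulty. There is no $\log(1/t)$ loss and no need to treat near-critical excursions separately: \rlem{ER} gives $m(R_n)\lll e^{-\alpha n}$ with $\alpha$ uniform in small $t$, hence $\int_{U_1}\tau^2\,dm\lll 1$ uniformly in $t$, for the first return time to $U_1$ under \emph{Lebesgue} measure. That measure-theoretic tail bound, transported by the bounded-distortion diffeomorphisms $\phi_1^{\hei(J')}\colon J'\to U_1$, is the only input about return times the proof needs; the pointwise derivative growth along individual excursions (and whether they pass near the critical point) never enters. Your proposed bookkeeping of near-critical visits with an extra factor of $t$ each is therefore unnecessary, and it would not by itself cure the infinite-count problem above.
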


\begin{proof}
  It is clear that $\int_{U_1} \rho^2 \, dm \ggg 1$.
  
  Let $J \in \cP$, so $J$ is red or blue. Let $h = \hei(J)$ and for $k \leq h$, let $J_k$ be
  the element of $\cP_k$ containing $J$. Each $J_k, k <h$, is yellow, while $J_h$ is yellow or blue.
  Then
  \[
    \rho(J) = \sum_{k=0}^{h-1} \tau \circ \phi_1^k (J_{k+1})
    .
  \]
  Define  $\rho_i$ at a point $x$ by: $\rho_i(x) = \tau \circ \phi_1^{k}(x)$ if $x$ is contained in a yellow interval $J' \in \cP_k$ with index $i$ and height $k$, for some $k$, but $x$ is not contained in a red interval of height $k$ (in $\cP_{k+1}$), and $\rho_i(x) = 0$ otherwise. 
  Then 
  \[
      \rho = \sum_{i=0}^\infty \rho_i.
  \]

  Let $J \in \bigcup_{n\geq 0}\cP_n$ be yellow.
  The map $\phi_1^{\hei(J)} \colon J \to U_1$
  is monotone and, following the proof of \rlem{Anfg}, 
  it is expanding by a factor of at least $1000^{\ind(J)}$.
  Using \rlem{ER},
  \[
    \int_{J} \tau^2 \circ \phi_1^{\hei(J)} \, dm
    \lll 1000^{-\ind(J)} \int_{U_1} \tau^2 \, dm
    \lll 1000^{-\ind(J)}
    . 
  \]
  Let $i \geq 0$. Let $\cA_i := \{J \in \bigcup_{n\geq0} \cP_n : J \text{ is yellow with index $i$}\}$. 
  By \rlem{Nahh}, $\#\cA_i \leq 7^i$.
  Observe that
  \[
    \rho_i
    = \sum_{J \in \cA_i} \tau \circ \phi_1^{\hei(J)} \big|_J
    .
  \]
  The elements of $\cA_i$ are pairwise disjoint, thus
  \[
    \int_{U_1} \rho_i^2 \, dm
    = \sum_{J \in \cA_i} \int_J \tau^2 \circ \phi_1^{\hei(J)} \, dm
    \lll 7^i \cdot 1000^{-i} \leq 100^{-i}
    . 
  \]
  Finally,
  \[
    \Bigl[ \int_{U_1} \rho^2 \, dm \Bigr]^{1/2}
    \lll \sum_{i=0}^\infty
    \Bigl[ \int_{U_1} \rho_i^2 \, dm \Bigr]^{1/2}
    \lll 1
    .
  \]
\end{proof}

\subsection{Approximation with nonuniformly expanding map}
\label{sec:aNUE}

Let $\cP$ be the partition given by \rprop{ContInd}.
For an interval $J \subset U_1$, let $\hf_J \colon J \to U_1$ be
a linear bijection.
Define $\hf \colon I \to \R$ and $\hrho \colon U_1 \to \N$,
\begin{align*}
  \hf(x) & = \begin{cases}
    \hf_J(x), & \text{if } x \in J, \ J \in \cP \text{ is red}, \\
    f(x), & \text{else},
  \end{cases}
  \\
  \hrho(x) & = \begin{cases}
    1, & \text{if } x \in J, \ J \in \cP \text{ is red}, \\
    \rho(x), & \text{else}.
  \end{cases}
\end{align*}

Let $\hF \colon U_1 \to U_1$, $\hF(x) = \hf^{\hrho(x)}(x)$.
In particular, $\hF$ coincides with $f^\rho$ on all blue elements of $\cP$.
Our construction ensures that
there are constants $C > 0$ and $\lambda > 1$, independent of $t$,
such that for every $J \in \cP$ and $x,y \in J$:
\begin{itemize}
  \item the restriction $\hF \colon J \to U_1$ is a bijection;
  \item $|\hF(x) - \hF(y)| \geq \lambda |x-y|$;
  \item
    $
      \bigl| \log |D\hF(x)| - \log |D\hF(y)| \bigr|
      \leq C |\hF(x)-\hF(y)|
    $;
  \item $|\hf^j(x) - \hf^j(y)| \leq C |\hF(x)-\hF(y)|$
    for all $0 \leq j \leq \hrho(J)$;
  \item $\int_{U_1} \hrho^2 \, dm \leq C$.
\end{itemize}
That is, $\hf$ is a \emph{nonuniformly expanding} map as in
Appendix~\ref{sectNUE:M}.
There is a unique absolutely continuous $\hf$-invariant
probability measure $\hmu$.

\begin{lem}
  \label{lemAprHat}
  For all Lipschitz $\pot \colon I \to \R$ and $n \geq 1$, 
  \[
    \int_I
      \Bigl| \frac{1}{n} \sum_{j=0}^{n-1} \pot \circ \hf^j
      - \int \pot \, d\hmu \Bigr|
    \, dm
    \leq C n^{-1/2} |\pot|_\Lip,
  \]
  where the constant $C$ does not depend on $t$.
\end{lem}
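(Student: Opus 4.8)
The plan is to pass to the mean-zero observable $\psi := \pot - \int\pot\,d\hmu$, noting $\frac1n\sum_{j<n}\pot\circ\hf^j - \int\pot\,d\hmu = \frac1n\sum_{j<n}\psi\circ\hf^j$, and to prove the sharper $L^2$ estimate
\[
  \Bigl| \sum_{j=0}^{n-1}\psi\circ\hf^j \Bigr|_{L^2(m)} \lll \sqrt{n}\,|\pot|_\Lip
  \qquad \text{uniformly in } t .
\]
The lemma then follows by dividing by $n$ and applying the Cauchy--Schwarz inequality to pass from $L^2(m)$ to $L^1(m)$ (recall $m(I)=1$), together with $|\psi|_\Lip \lll |\pot|_\Lip$. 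The exponent $-1/2$, rather than merely $o(1)$, is exactly what a central-limit-type bound delivers, and it is available because the induced return time $\hrho$ lies in $L^2(m)$ with $\int_{U_1}\hrho^2\,dm \mc 1$ uniformly in $t$, by \rprop{ContInd}.

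The engine is the martingale--coboundary decomposition for the induced Gibbs--Markov map $\hF\colon U_1\to U_1$, which is the content of Appendix~\ref{sectNUE:M}. By the bullet list preceding the lemma, $\hF$ has full branches and its expansion factor, distortion constant and second moment of the return time are all independent of $t$; that is, $\hf$ is a nonuniformly expanding map with uniform data. Lift $\psi$ to the induced observable $\Psi := \sum_{j<\hrho}\psi\circ\hf^j$. Thanks to the bullet-list bound $|\hf^j(x)-\hf^j(y)|\le C|\hF(x)-\hF(y)|$ for $j\le\hrho(J)$, together with $|\psi|_\infty\le 2|\pot|_\Lip$, $\hrho\in L^2(m)$ and $d\hmu/dm\mc 1$, the observable $\Psi$ is piecewise Lipschitz with the $L^2(\hmu)$-norms of $\Psi$ and of its along-branch Lipschitz data both $\lll|\pot|_\Lip$. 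Feeding this into the appendix yields $\Psi = \chi + \zeta\circ\hF - \zeta$ with $\chi$ a reverse-martingale difference and $\zeta\in L^2(\hmu)$, all with norms $\lll|\pot|_\Lip$ uniformly in $t$; hence $\bigl|\sum_{k=0}^{N-1}\Psi\circ\hF^k\bigr|_{L^2(\hmu)}\lll\sqrt{N}\,|\pot|_\Lip$ for every $N$.

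Next I would unfold this back to real time. For $x\in U_1$ let $N(x,n)$ count the returns to $U_1$ before time $n$; then $\sum_{j<n}\psi\circ\hf^j(x) = \sum_{k<N(x,n)}\Psi\circ\hF^k(x) + R(x,n)$, where the incomplete-block remainder satisfies $|R(x,n)|\le 2|\pot|_\Lip\,\hrho\circ\hF^{N(x,n)}(x)$, so $|R|_{L^2(m)}\lll|\pot|_\Lip$ by $\hrho\in L^2(m)$. The random-time sum $\sum_{k<N(x,n)}\Psi\circ\hF^k$ is handled by Doob's maximal inequality for the martingale part (using $N(x,n)\le n$) and by telescoping for the coboundary part, giving a bound $\lll\sqrt n\,|\pot|_\Lip$ in $L^2(m)$ after converting $L^2(\hmu)$ to $L^2(m)$ via $dm/d\hmu\mc 1$. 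This proves the $L^2$ estimate over $U_1$. To reach all of $I$, split at the first entry time $e(x)$ to $U_1$: the truncated sum $\sum_{j<e(x)}\psi\circ\hf^j(x)$ is $\le e(x)|\pot|_\Lip$, and $e\in L^2(m)$ uniformly in $t$ because $\{e>n\}\subseteq E_n$ in the notation of \rlem{ER} (avoiding $U_1$ under $\hf$ forces agreement with $f$), while the push-forward of $m$ by $\hf^{e}$ onto $U_1$ has uniformly bounded density by distortion control; so the $U_1$-estimate transfers.

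The main obstacle is not any single step --- each is by now routine for Gibbs--Markov and nonuniformly expanding systems --- but the bookkeeping needed to keep every constant independent of $t$. That uniformity has, however, already been secured upstream: \rprop{ContInd} and the bullet list were arranged so that no structural constant of $\hF$ (hence of $\hf$) degenerates as $t\to 0$, and the appendix's martingale approximation is stated with constants depending only on those data. A secondary point worth flagging is that the power $n^{-1/2}$ genuinely requires the \emph{second} moment of $\hrho$; with only $\hrho\in L^1(m)$ one would obtain $o(1)$ but not a rate.
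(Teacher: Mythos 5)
Your argument is correct in outline, but it takes a genuinely different route from the paper at the one step that is not routine: passing from the invariant measure to Lebesgue. The paper splits the proof into (i) quoting \rlem{Mommu} (from \cite{KKM17mart}) for the maximal $L^2(\hmu)$ bound $\lll n^{1/2}\|\pot\|_\eta$, and then (ii) invoking the coupling/equidistribution theorem of \cite{K18eq}: the initial measure $m$, decomposed over branches of the first entry map $\psi_1$, is ``forward regular'' with a jump function of bounded first moment, so there is a coupling of the Birkhoff processes under $m$ and under $\hmu$ whose discrepancy has bounded expectation, uniformly in $t$. You instead re-derive the content of (i) by hand (martingale--coboundary decomposition of the induced observable $\Psi$ over $\hF$, Doob for the reverse-martingale part, telescoping for the coboundary, unfolding via the return counter $N(x,n)$) and replace (ii) by the elementary observations that $d\mu_Y/dm \mc 1$ on the inducing set $U_1$ (Lemma A.1) and that the first-entry pushforward of $m$ onto $U_1$ has bounded density. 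Your route is more self-contained (no appeal to \cite{K18eq}), at the cost of redoing the unfolding that \rlem{Mommu} packages; the paper's route is shorter but leans on two external black boxes. Two small points to tighten. First, be careful with the phrase ``$dm/d\hmu \mc 1$'': this is false on $I$ for the $\hf$-invariant measure $\hmu$ in general, and is only the comparison $m|_{U_1} \mc \hmu_Y$ on the inducing set --- your argument survives because you genuinely work on $U_1$ and treat the pre-entry segment separately, but the wording should reflect that. Second, the claim $|R(\cdot,n)|_{L^2(m)} \lll |\pot|_\Lip$ is an overstatement: $\hrho\circ\hF^{N(x,n)}$ is a composition at a random time, and the cheap bound $\bigl\|\sup_{k\le n}\hrho\circ\hF^k\bigr\|_{L^2}\le (n+1)^{1/2}\|\hrho\|_{L^2(\hmu_Y)}$ gives only $\lll n^{1/2}|\pot|_\Lip$; that is still exactly what the final estimate needs, so the proof is unharmed, but the stated constant-order bound would require a separate argument.
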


\begin{proof}
  By \rlem{Mommu},
  \begin{equation}
    \label{eq:veryrandom}
    \int
      \Bigl| \frac{1}{n} \sum_{j=0}^{n-1} \pot \circ \hf^j
      - \int \pot \, d\hmu \Bigr|
    \, d\hmu
    \lll n^{-1/2} |\pot|_\Lip
    .
  \end{equation}
  Note that the integral is taken with respect to the
  invariant measure $\hmu$ rather than $m$. It remains to establish
  an appropriate connection between $m$ and $\hmu$.
  For this, we follow \cite{K18eq}.
  
  Let $\psi_1 \colon I \to U_1$ be the first entry map for $f$
  (the same as for $\hf$) and $\tau \colon I \to \N \cup \{0\}$,
  \[
    \tau(x) = \inf \{k \geq 0 \colon f^k(x) \in U_1\}
    ,
  \]
  so that $\psi_1(x) = \hf^{\tau(x)}(x) = f^{\tau(x)}(x)$.
  
  It follows from \rlem{ER} that $\int_I \tau \, dm \lll 1$.
  Since $f(\partial I) \subset \partial I$ and
  $f^j(\partial U_1) \cap U_1 = \emptyset$ for all $j$,
  every branch of $\psi_1$ is mapped
  diffeomorphically on $U_1$. By \rlem{DistExp},
  $\psi_1$ has universally bounded distortion.
  
  Write
  $
    m = \sum_{J \in B} m(J) m_J
  $,
  where $B$ is the set of all branches of $\psi_1$ and $m_J$ is the
  normalized to probability restriction of $m$ to $J$.
  For each $J$, the probability measure $f^{\tau(J)}_* m_J$
  is supported on $U_1$, and due to the bounded distortion,
  it is regular in the sense of \cite{K18eq},
  with the regularity constant ($R'$ in \cite{K18eq})
  independent of $t$.
  Thus $m$ is \emph{forward regular}. The
  \emph{jump function} $\tau \colon B \to \N \cup \{0\}$
  has bounded (uniformly in $t$) first moment:
  $\sum_{J \in B} m(J) \tau(J) \lll 1$.
  
  Let $X_n$ and $Y_n$ the the discrete time random processes
  given by $\sum_{j=0}^{n-1} \pot \circ \hf^j$ on the
  probability spaces $(I,m)$ and $(I, \hmu)$ respectively.
  By \cite[Thm. 2.5]{K18eq}, there is a coupling of
  $X_n$ and $Y_n$,
  that is, there exists a probability space $\Omega$ supporting
  random processes $\{X'_n\}$ and $\{Y'_n\}$, equal in distribution
  to $\{X_n\}$ and $\{Y_n\}$ respectively, such that
  \begin{equation}
    \label{eq:coup}
    \E \left(\sup_{n \geq 0} |X_n' - Y_n'|\right)
    \lll \sup_I |\pot|
    .
  \end{equation}
  Bound~\eqref{eq:coup},
  together with~\eqref{eq:veryrandom}, implies our result.
\end{proof}

Let
$
  I_r = \cup \{ J \in \cP \colon J \text{ is red} \}
$.

\begin{lem}
  \label{lemTrTr}
  There is a constant $C > 0$, independent of $t$,
  such that for all $n \geq 0$,
  \[
    m \{ x \in I \colon f^j(x) \not \in I_r \text{ for all } j \leq n \}
    \ggg (1-Ct)^n
    .
  \]
  In particular, if $n(t) = o(t^{-1})$, then 
  \[
    \lim_{t \to 0}
    m \{ x \in I \colon f^j(x) = \hf^j(x) \text{ for all } j \leq n(t) \}
    = 1
    .
  \]
\end{lem}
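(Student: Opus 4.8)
The plan is to transfer everything to the approximating map $\hf$ and its induced map $\hF$ on $U_1$, and then run an elementary escape-rate estimate for $\hF$.

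\emph{Reduction.} Since $\hf=f$ off $I_r$, a one-line induction shows that $f^j(x)\notin I_r$ for all $j\le n$ forces $\hf^j(x)=f^j(x)$ for all $j\le n$, and conversely; hence $A_n:=\{x:f^j(x)\notin I_r,\ 0\le j\le n\}=\{x:\hf^j(x)\notin I_r,\ 0\le j\le n\}$, and the ``in particular'' statement will follow from the main bound because $\{x:f^j(x)=\hf^j(x),\ j\le n(t)\}\supseteq A_{n(t)}$ and $(1-Ct)^{n(t)}\to1$ whenever $t\,n(t)\to0$. Now the branches of $\hF$ are exactly the elements of $\cP$: a blue $J$ is carried by $\hf^{\rho(J)}=f^{\rho(J)}$ diffeomorphically onto $U_1$ (\rprop{ContInd}), a red $J$ by the linear bijection $\hf_J$ onto $U_1$ in one step. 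As $I_r\subset U_1$, an $\hf$-orbit can meet $I_r$ only when it sits at an $\hF$-image point: if $y$ lies in a blue $J$ and $0<p<\rho(J)$, then either $f^p(y)\notin U_1\supseteq I_r$ (the orbit is strictly between two returns to $U_1$), or $f^p(y)\in\phi_1^i(J)$ for some intermediate return index $1\le i<\hei(J)$; and a (somewhat technical) unwinding of the recursive colouring of the $\cP_k$ shows that $\phi_1^i$ carries the partition $\cP$ with its colours on the level-$(i{+}1)$ ancestor of $J$ onto the corresponding partition on $\phi_1^i(J_{i+1})$, so $\phi_1^i(J)$ lies inside a blue element of $\cP$ and is disjoint from $I_r$. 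Writing $\psi_1\colon I\to U_1$ for the (a.e.\ defined) first-entry map and $B_n=\{y\in U_1:\hF^k(y)\notin I_r,\ 0\le k\le n\}$, and using $\sum_{k<n}\hrho\circ\hF^k\ge n$ to see that $n$ induced steps already account for $n$ steps of $\hf$, we obtain $A_n\supseteq\psi_1^{-1}(B_n)$ up to the $m$-null set of orbits never entering $U_1$.

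\emph{Passing to the induced invariant measure.} By \rlem{ER}, $m$-a.e.\ point enters $U_1$, so $m(A_n)\ge(\psi_1)_*m(B_n)$. Each branch of $\psi_1$ maps diffeomorphically onto $U_1$ with distortion bounded uniformly in $t$ (\rlem{DistExp}), so $(\psi_1)_*m$ has density $\mc1$ with respect to Lebesgue on $U_1$; likewise the a.c.i.p.\ $\hmu_1$ of the full-branch Gibbs--Markov map $\hF$ (equivalently the normalised restriction of $\hmu$ to $U_1$) has density $\mc1$, all constants uniform in $t$ by the bounds of \S\ref{sec:aNUE}. Therefore $m(A_n)\ggg\hmu_1(B_n)$, uniformly in $t$, and it remains to bound $\hmu_1(B_n)$ from below.

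\emph{Escape-rate estimate.} Let $\hL$ be the transfer operator of $\hF$ with respect to $\hmu_1$, so $\hL g(x)=\sum_{J\in\cP}w_J(x)\,g(y_J(x))$ with $y_J(x)=(\hF|_J)^{-1}(x)$, weights $w_J(x)>0$ and $\sum_{J}w_J(x)=\hL\mathbf{1}(x)=1$; for a red branch $J$ one has $w_J(x)\lll|J|/|U_1|$, because there $\hF$ is the \emph{linear} map $\hf_J\colon J\to U_1$ and $d\hmu_1/dm\mc1$. Peeling $B_n$ with the identity $\int u\,(v\circ\hF)\,d\hmu_1=\int(\hL u)\,v\,d\hmu_1$ gives $\hmu_1(B_n)=\int\mathbf{1}_{U_1\setminus I_r}\,\phi_n\,d\hmu_1$ with $\phi_0\equiv1$ and $\phi_{k+1}=\hL(\mathbf{1}_{U_1\setminus I_r}\phi_k)$. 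Blue branches are disjoint from $I_r$ and red branches lie inside $I_r$, hence $\phi_{k+1}(x)=\sum_{J\text{ blue}}w_J(x)\,\phi_k(y_J(x))$, and $\sum_{J\text{ blue}}w_J(x)=1-\sum_{J\text{ red}}w_J(x)\ge1-C\,|U_1|^{-1}m(I_r)\ge1-C't$ uniformly in $x,t$ (using \rprop{ContInd} and $|U_1|\ggg1$). An induction then yields $\phi_k\ge(1-C't)^k$ pointwise, so $\hmu_1(B_n)\ge(1-C't)^n\,\hmu_1(U_1\setminus I_r)\ggg(1-C't)^n$. Combined with the previous paragraph this gives $m(A_n)\ggg(1-Ct)^n$, which is the lemma.

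\emph{Main obstacle.} The genuinely delicate point is the structural claim in the Reduction step — that the intermediate returns of a blue branch remain inside blue elements of $\cP$ and hence avoid $I_r$ — which has to be read off from the inductive construction of the partitions $\cP_k$ and the ``red/blue/yellow'' rules, the clean formulation being a colour-preserving conjugacy by $\phi_1^i$ between $\cP$ on an ancestor branch and $\cP$ on its image, compatible with the ``$U_0$-extensible diffeomorphism'' criterion. Everything else — the reduction, the $m$-versus-$\hmu_1$ comparison, and the elementary computation with $\hL$ — is routine given \S\ref{sectPrel}, \S\ref{sectIndScheme} and the uniform-in-$t$ Gibbs--Markov bounds for $\hF$.
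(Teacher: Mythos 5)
Your proof takes a genuinely different route from the paper's, and the difference is precisely the source of your gap.

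In the paper's proof of \rlem{TrTr}, one induces on the \emph{first} return/entry time to $U_1$: set $\tau(x)=\inf\{k\ge1: f^k(x)\in U_1\}$ and $g=f^\tau$. Because $I_r\subset U_1$, between consecutive returns the $f$-orbit stays outside $U_1\supset I_r$, and in $n$ iterations of $f$ one records at most $n$ visits to $U_1$; hence
\[
\{x:\, g^j(x)\notin I_r\ \text{for all}\ 0\le j\le n\}\ \subseteq\ \{x:\, f^j(x)\notin I_r\ \text{for all}\ 0\le j\le n\},
\]
with no structural analysis of $\cP$ needed. The problem then reduces to an escape-rate bound for $g$ from $I_r$, where \rprop{ContInd} supplies the full-branch bounded-distortion structure and $m(I_r)\lll t$ gives the per-step escape probability $\lll t$.

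You instead induce on $\hF=\hf^{\hrho}$, whose blue-branch inducing time $\hrho(J)=\tau_{\hei(J)}$ can be a large multiple of one return. That is why you must control every intermediate return $\phi_1^i(J)$ for $0<i<\hei(J)$ and argue it lies in a blue element of $\cP$. You flag this ``colour-preserving conjugacy'' as the genuinely delicate point, and you do not prove it. It is not a bookkeeping fact: one can check that $\phi_1^i(J)$ is a branch of $\phi_1^{\hei(J)-i}$ admitting a $U_0$-extensible diffeomorphic extension onto $U_1$, but it takes a real argument to show that it is actually an element of $\cP$ coloured blue (one must verify that all of its ancestors in $\cP_1,\dots,\cP_{\hei(J)-i-1}$ are yellow, which does not follow just by ``conjugating the colouring by $\phi_1^i$'', since the branch structure of $\phi_1$ restricted to the proper subinterval $\phi_1^i(J_i)\subsetneq U_1$ need not match the branch structure on $U_1$ used to define the $\cP_k$). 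As written, the Reduction step therefore has a genuine hole.

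The remaining ingredients of your argument — the comparison of $m$ with $\hmu_1$ via $(\psi_1)_*m$, and the transfer-operator peeling for the escape rate — are correct and parallel the paper's bounded-distortion estimate. The moral is that the shorter inducing time $\tau$ used in the paper makes the intermediate times trivially avoid $I_r$ (they are outside $U_1$) and renders your structural claim unnecessary; either prove that claim carefully or switch to the first-return map as the paper does.
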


\begin{proof}
  Let $\tau \colon I \to \N$,
  \[
    \tau(x) = \inf \{k \geq 1 \colon f^k(x) \in U_1\}
  \]
  and $g \colon I \to U_1$, $g(x) = f^{\tau(x)}(x)$.

  Observe that
  \[
    m \{ x \in I \colon f^j(x) \not \in I_r \text{ for all } j \leq n \}
    \geq
    m \{ x \in I \colon g^j(x) \not \in I_r \text{ for all } j \leq n \}
    .
  \]
  By \rprop{ContInd}, all branches of the map $g$
  in $U_1 \setminus I_r$ are mapped diffeomorphically
  and with uniformly bounded distortion onto $U_1$.
  So are the branches in $I \setminus U_1$,
  following the argument for the first entry map $\psi_1$ in
  the proof of \rlem{AprHat}.
  \rprop{ContInd} guarantees that $m(I_r) \lll t$.
  Therefore,
  \[
    m \{ x \in I \colon g^n(x) \in I_r \mid
         g^j(x) \not \in I_r \text{ for all } j < n
    \} 
    \lll \frac{m(I_r)}{m(U_1)}
    \lll t
    .
  \]
  The result follows.
\end{proof}

\begin{lem}
  \label{lemStatStab}
  For all Lipschitz $\pot \colon I \to \R$, we have
  $\int \pot \, d\hmu_t \to \int \pot \, d\mu_0$ as $t \to 0$.
\end{lem}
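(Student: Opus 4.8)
The plan is to push the uniform-in-$t$ statistical control of $\hf_t$ provided by \rlem{AprHat} through to the invariant measures $\hmu_t$, exploiting that $\hf_0 = f_0$, $\hmu_0 = \mu_0$, and that $\hf_t$ coincides with $f_t$ off a set of measure $O(t)$. Fix a Lipschitz observable $\pot$ and a large integer $n$. Applying \rlem{AprHat} to $\hf_t$ and, separately, to $\hf_0 = f_0$, shows that $\int \pot \, d\hmu_t$ lies within $C n^{-1/2} |\pot|_\Lip$ of the finite-time average
\[
  \int_I \frac{1}{n} \sum_{j=0}^{n-1} \pot \circ \hf_t^j \, dm,
\]
and that $\int \pot \, d\mu_0$ lies within $C n^{-1/2} |\pot|_\Lip$ of
\[
  \int_I \frac{1}{n} \sum_{j=0}^{n-1} \pot \circ f_0^j \, dm,
\]
with $C$ independent of both $t$ and $n$. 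So it would suffice to prove that, for each fixed $n$, the first of these averages converges to the second as $t \to 0$. Granting that, the triangle inequality bounds $\limsup_{t \to 0} | \int \pot \, d\hmu_t - \int \pot \, d\mu_0 |$ by $2 C n^{-1/2} |\pot|_\Lip$, and letting $n \to \infty$ yields the lemma.

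For the convergence of the fixed-$n$ averages I would argue term by term in $j = 0, \ldots, n-1$. Let $G_t = \{ x \in I \colon f_t^j(x) = \hf_t^j(x) \text{ for all } j \leq n \}$; on $G_t$ we have $\pot \circ \hf_t^j = \pot \circ f_t^j$, and \rlem{TrTr} gives $m(G_t) \ggg (1-Ct)^n \to 1$ as $t \to 0$. On $I \setminus G_t$ the two integrands are each bounded by $\sup_I |\pot|$. Since $f_t(x)$ is $\cC^2$, hence uniformly Lipschitz, on the compact set $I \times [0,\eps]$ and $I$ is forward invariant, a routine telescoping estimate gives $\sup_{x \in I} |f_t^j(x) - f_0^j(x)| \to 0$ as $t \to 0$ for each fixed $j$. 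Combining these,
\[
  \Bigl| \int_I \pot \circ \hf_t^j \, dm - \int_I \pot \circ f_0^j \, dm \Bigr|
  \leq |\pot|_\Lip \sup_{x \in I} |f_t^j(x) - f_0^j(x)| + 2 \sup_I |\pot| \cdot m(I \setminus G_t)
  \to 0,
\]
and averaging over $j = 0, \ldots, n-1$ gives the claim.

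I do not expect a genuinely hard step: the substantive content has already been isolated into \rlem{AprHat} (the uniform-in-$t$ rate $n^{-1/2}$) and \rlem{TrTr} (agreement of $\hf_t$ with $f_t$ off a set of measure $O(t)$). The one point requiring care is the order of the limits. One cannot first send $t \to 0$ inside the finite-time average of $\hf_t$ and hope for any useful uniformity, since the modification defining $\hf_t$ lives on the red set $I_r(t)$, whose geometry is not controlled as $t \to 0$ beyond its measure being $O(t)$. The argument must therefore keep $n$ fixed while passing $t \to 0$, and only afterwards let $n \to \infty$; this is legitimate precisely because the constant $C$ in \rlem{AprHat} does not depend on $t$.
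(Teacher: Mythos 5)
Your proposal is correct and follows essentially the same route as the paper: both arguments fix $n$, apply \rlem{AprHat} (once to $\hf_t$ and once to $\hf_0=f_0$) to reduce to comparing the two finite-time Birkhoff averages, use \rlem{TrTr} together with continuity of $(x,t)\mapsto f_t^n(x)$ to show those averages agree as $t\to 0$ for fixed $n$, and then let $n\to\infty$. The only cosmetic difference is that you bound term-by-term in $j$ while the paper keeps the $\frac1n\sum_j$ block intact; the substance is identical.
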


\begin{proof}
  For every (fixed) $n \geq 1$, the map $(x, t) \mapsto f_t^n(x)$ is continuous.
  Thus
  \[
    \sup_I
    \Bigl|
      \frac{1}{n} \sum_{j=0}^{n-1} \pot \circ f_t^j
     -\frac{1}{n} \sum_{j=0}^{n-1} \pot \circ f_0^j
    \Bigr|
    \to 0
    \quad \text{as } t \to 0
    .
  \]
  By \rlem{TrTr}, as $t \to 0$,
  \begin{align*}
    \int_I 
      &
      \Bigl|
        \frac{1}{n} \sum_{j=0}^{n-1} \pot \circ \hf_t^j
      - \frac{1}{n} \sum_{j=0}^{n-1} \pot \circ   f_0^j
      \Bigr|
    \, dm
    \leq 
    \sup_I
      \Bigl|
        \frac{1}{n} \sum_{j=0}^{n-1} \pot \circ f_t^j
       -\frac{1}{n} \sum_{j=0}^{n-1} \pot \circ f_0^j
      \Bigr|
    \\ & + 2 \sup_I |\pot| \,
      m \{ x \in I \colon f_t^j(x) = \hf_t^j(x)
        \text{ for all } j \leq n \}
    = o(1)
    .
  \end{align*}
  By \rlem{AprHat},
  \begin{align*}
    \Bigl| \int \pot \, d\hmu_0 - & \int \pot \, d\hmu_t \Bigr|
    \\
    & \lll \int_I
      \Bigl|
        \frac{1}{n} \sum_{j=0}^{n-1} \pot \circ \hf_t^j
      - \frac{1}{n} \sum_{j=0}^{n-1} \pot \circ   f_0^j
      \Bigr|
    \, dm
    + n^{-1/2} |\pot|_\Lip
    \\ & = o(1) + n^{-1/2} |\pot|_\Lip
    .
  \end{align*}
  Since we can fix $n$ arbitrarily large, the result follows.
\end{proof}

\appendix

\section{Moment estimates for nonuniformly expanding maps}
\label{sectNUE:M}

Let $(M,d)$ be a bounded metric space with a map $f \colon M \to M$.
Suppose that $Y \subset M$ and $m$ is a Borel probability measure on $Y$.
Suppose that $\alpha$ is a finite or countable partition of $Y$ 
(up to a zero measure set) with $m(a) > 0$ for all $a \in \alpha$.
We require that there exist an integrable function
$\tau \colon Y \to \{1,2,\ldots\}$, constant on each $a \in \alpha$ with
value $\tau(a)$, and constants $\lambda > 1$, $K > 0$ and $\eta \in (0,1]$ such that
for each $a \in \alpha$,
\begin{itemize}
  \item $F = f^{\tau}$ restricts to a (measure-theoretic) bijection from
    $a$ to $Y$;
  \item $d(F(x), F(y)) \geq \lambda d(x,y)$ for all $x,y \in a$;
  \item $d(f^\ell(x), f^\ell(y)) \leq K d(F(x), F(y))$
    for all $x,y \in a$ and $0 \leq \ell \leq \tau(a)$;
  \item the inverse Jacobian $\zeta=\frac{dm}{dm \circ F}$ of the restriction
    $F \colon a \to Y$ satisfies
    \[
      \bigl| \log \zeta(x) - \log \zeta(y) \bigr|
      \leq K d(F(x),F(y))^\eta
    \]
    for all $x,y \in a$.
\end{itemize}
We say that $f \colon M \to M$ as above is a \emph{nonuniformly expanding map.} We refer to $Y$ as the \emph{inducing set}, to $\tau$ as
the \emph{inducing time} and to $F$ as the \emph{induced map.}

We assume that $\int_Y \tau^2 \, dm < \infty$. We use $C$ to denote various positive
constants which depend continuously (only) on $\eta$, $K$, $\lambda$, $\diam M$ and
$\int_Y \tau^2 \, dm$.

\begin{lem}[{\cite[Prop.~2.5]{KKM16exp}}]
  There exists a unique $F$-invariant
  probability measure $\mu_Y$ on $Y$, 
  absolutely continuous with respect to
  $m$, and 
  \[
    C^{-1}
    \leq \frac{d\mu_Y}{dm}
    \leq C.
  \]
\end{lem}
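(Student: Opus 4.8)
We sketch the argument; this is \cite[Prop.~2.5]{KKM16exp}. The plan is to obtain $\mu_Y$ as a fixed point of the transfer operator $\cL$ of $F$ with respect to $m$, exploiting that every branch of $F$, hence of every iterate $F^k$, is full. Here $\cL$ is characterised by $\int_Y \cL h \cdot g \, dm = \int_Y h \cdot (g\circ F)\, dm$, and explicitly $\cL^k h(x) = \sum_b \zeta_b(y_b(x))\, h(y_b(x))$, where the sum runs over the branches $b$ of $F^k$, $y_b = (F^k|_b)^{-1} \colon Y \to b$, and $\zeta_b$ is the inverse Jacobian of $F^k$ on $b$. Using $d(F^j z, F^j z') \leq \lambda^{-(k-j)} d(F^k z, F^k z')$, the single-branch distortion hypothesis yields, by summing a geometric series, a \emph{uniform} bound $|\log \zeta_b(z) - \log \zeta_b(z')| \leq K_0\, d(F^k z, F^k z')^\eta$ valid for all $k$, all branches $b$ of $F^k$ and all $z, z' \in b$, with $K_0 = K/(1-\lambda^{-\eta})$.

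From this I would first read off uniform two-sided bounds on $\cL^k 1$. The change of variables $x = F^k(z)$ on a full branch $b$ gives $\int_Y \zeta_b(y_b(x))\, dm(x) = m(b)$, hence $\int_Y \cL^k 1 \, dm = \sum_b m(b) = m(Y) = 1$ for every $k$. The uniform distortion bound shows that $x \mapsto \zeta_b(y_b(x))$ is constant up to the factor $e^{K_0 (\diam M)^\eta}$, so $\zeta_b(y_b(x)) \mc m(b)$ with constants independent of $b, k, x$; summing over $b$ gives $\cL^k 1 \mc 1$ uniformly in $k$. The same bound, applied to each summand (using $F^k(y_b(x)) = x$), shows that each $\cL^k 1$ lies in the cone $\cC_R = \{h \colon Y \to (0,\infty) : \log h(x) - \log h(y) \leq R\, d(x,y)^\eta \ \text{for all } x, y\}$ with $R = K_0$.

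To produce an invariant density I would use the Birkhoff cone argument rather than the Arzel\`a--Ascoli theorem, since the inducing set $Y$ need not be compact. One checks $\cL(\cC_r) \subseteq \cC_{\lambda^{-\eta} r + K}$, so enlarging to $R' = 2K_0$ the cone $\cC_{R'}$ is mapped strictly inside itself, and its image has finite diameter in the Hilbert projective metric of $\cC_{R'}$ (here using that $M$, hence $Y$, is bounded, so functions in the image have bounded oscillation). Birkhoff's theorem then gives a contraction, a unique normalised fixed density $h_* \in \cC_{R'}$, and $\cL^k h \to h_*$ projectively for $h \in \cC_{R'}$. Since $\int_Y \cL^k 1 \, dm = 1$ for all $k$ and $\cL$ preserves the integral, the convergence $\cL^k 1 \to h_*$ is genuine with $\int_Y h_* \, dm = 1$ and $\cL h_* = h_*$. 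Setting $d\mu_Y = h_*\, dm$, invariance follows from $\int g\circ F \, d\mu_Y = \int (g\circ F) h_*\, dm = \int g\, \cL h_*\, dm = \int g\, d\mu_Y$. Membership $h_* \in \cC_{R'}$ bounds $\sup h_* / \inf h_* \leq e^{R'(\diam M)^\eta}$, and combined with $\int_Y h_*\, dm = m(Y) = 1$ this yields $C^{-1} \leq h_* \leq C$ with $C = e^{R'(\diam M)^\eta}$. For uniqueness, any absolutely continuous $F$-invariant probability measure has density $h \in L^1(m)$ with $\cL h = h$, hence $h = \cL^n h$; as $\cL$ is an $L^1(m)$-contraction and $(\int_Y g\, dm)^{-1}\cL^n g \to h_*$ in $L^1(m)$ for every nonnegative $g$ with $\int_Y g\, dm > 0$ (by density of the cone and the contraction), we conclude $h = h_*$.

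The one genuinely load-bearing step is the uniform-in-$k$ distortion control of the branches of $F^k$: both the two-sided bound $\cL^k 1 \mc 1$ and the fact that $\cL^k 1$ stays in a fixed cone rest on the convergence of $\sum_{j<k} K\, d(F^j z, F^j z')^\eta \leq K \sum_{j<k} \lambda^{-(k-j)\eta} d(F^k z, F^k z')^\eta$, which holds precisely because of uniform expansion $\lambda > 1$ together with the positive Hölder exponent $\eta$. Everything after that is soft.
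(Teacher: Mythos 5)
The paper does not prove this lemma in the text; it simply cites \cite[Prop.~2.5]{KKM16exp}, whose title advertises an \emph{explicit coupling} argument. Your proof, by contrast, goes through the transfer operator and a Birkhoff cone/Hilbert-projective-metric contraction. Both are standard routes for full-branch Gibbs--Markov maps, and your route is correct in substance. The computation $|\log\zeta_b(z)-\log\zeta_b(z')|\le K\sum_{j<k}\lambda^{-(k-j-1)\eta}d(F^kz,F^kz')^\eta\le K_0\,d(F^kz,F^kz')^\eta$ is the right load-bearing estimate, the cone inclusion $\cL(\cC_r)\subseteq\cC_{\lambda^{-\eta}r+K}$ follows because a sum of positive terms each with a uniform log-oscillation bound inherits that bound, and the finite Hilbert diameter of $\cC_{K_0(1+\lambda^{-\eta})}$ inside $\cC_{2K_0}$ does rely on $\diam M<\infty$, which you correctly invoke. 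The two-sided bound on $h_*$ and $\int h_*\,dm=1$ give the stated $C^{-1}\le d\mu_Y/dm\le C$, with $C$ depending only on $K$, $\lambda$, $\eta$, $\diam M$ as required by the conventions of the appendix. The coupling argument in the cited reference buys explicitness of constants and avoids any reference to cones; yours is perhaps more compact.

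One step is stated too loosely: for uniqueness you assert that ``by density of the cone and the contraction'' one has $(\int g\,dm)^{-1}\cL^n g\to h_*$ in $L^1$ for every nonnegative $g\in L^1$ with positive mass. The cone $\cC_{R'}$ consists of strictly positive functions with uniformly bounded log-oscillation, which is far from dense in $L^1_+$, so the inference is not immediate. The standard repair is: approximate $g$ in $L^1$ by a bounded H\"older function $g_\eps$, write $g_\eps=(g_\eps^++1)-(g_\eps^-+1)$ where both summands lie in a fixed cone $\cC_R$, apply projective convergence to each (the normalisation $\int\cL^n\cdot\,dm$ being preserved upgrades projective convergence to genuine $L^1$ convergence), and use that $\cL$ is an $L^1$ contraction to control the $\eps$ error uniformly in $n$. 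Alternatively, deduce ergodicity (even exactness) of $(F,\mu_Y)$ from the full-branch structure and bounded distortion, which forces any a.c.\ invariant probability to coincide with $\mu_Y$. Either way the gap is routine to fill, but as written the density claim is unjustified.
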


Define a Young tower
\[
  \Delta = \{(y, \ell) \in Y \times \Z \colon 0 \leq \ell < \tau(y) \}
\]
with a tower map $T \colon \Delta \to \Delta$,
\[
  T(y, \ell) = \begin{cases}
    (y, \ell + 1), & \ell < \tau(y) - 1, \\
    (F(y),0), & \ell = \tau(y) - 1,
  \end{cases}
\]
and a projection $\pi \colon \Delta \to M$,
$\pi(y,\ell) = f^\ell(y)$. Then $\pi$ is a semi-conjugacy
between $T \colon \Delta \to \Delta$ and $f \colon M \to M$,
i.e.\ $\pi \circ T = f \circ \pi$.

The measure 
\[
    \mu_\Delta = \frac{\mu_Y \times \text{counting} }{ \int \tau \, d\mu_Y}
\]
is a $T$-invariant probability measure on $\Delta$, and
$\mu = \pi_* \mu_\Delta$ is an $f$-invariant probability
measure on $M$.

Suppose that $\pot \colon M \to \R$. Define
\begin{equation} \label{eq:hoho}
  |\pot|_\eta = \sup_{x \neq y \in M} \frac{|\pot(y) - \pot(x)|}{d(x,y)^\eta}
  , \qquad
  |\pot|_\infty = \sup_{x \in M} |\pot(x)|
  , \qquad
  \|\pot\|_\eta = |\pot|_\eta + |\pot|_\infty
  .
\end{equation}
We define similarly $| \cdot |_\eta$, $| \cdot |_\infty$ and $\| \cdot \|_\eta$
for functions $\pot \colon Y \to \R$.

\begin{lem}\label{lemtau}
  Let $\bar{\tau} = \int_Y \tau \, d\mu_Y$ and $\tau_k  = \sum_{j=0}^{k-1} \tau \circ F$.
  Then
  \[
    \bigl| \tau_k - k \bar{\tau} \bigr|_{L^2(\mu_Y)}
    \leq C k^{-1/2}
    .
  \]
\end{lem}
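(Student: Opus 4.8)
The plan is to run the standard martingale--coboundary (Gordin) decomposition for the induced map $F\colon Y\to Y$, exploiting that $\tau$ is constant on each element of the partition $\alpha$. Set $\hat\tau=\tau-\bar\tau$, so that $\int_Y\hat\tau\,d\mu_Y=0$ and $\tau_k-k\bar\tau=S_k$, where $S_k=\sum_{j=0}^{k-1}\hat\tau\circ F^j$; note that $\|\hat\tau\|_{L^2(\mu_Y)}\lll1$, since $d\mu_Y/dm\mc1$ gives $\int_Y\tau^2\,d\mu_Y\lll\int_Y\tau^2\,dm$. Let $L$ be the transfer operator of $F$ with respect to $\mu_Y$, characterised by $\int (h\circ F)\,g\,d\mu_Y=\int h\,(Lg)\,d\mu_Y$, so that $L(h\circ F)=h$. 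If I can show $\sum_{n\geq1}\|L^n\hat\tau\|_{L^2(\mu_Y)}\lll1$, then $\chi:=\sum_{n\geq1}L^n\hat\tau$ is a well-defined element of $L^2(\mu_Y)$ with $\|\chi\|_{L^2(\mu_Y)}\lll\|\hat\tau\|_{L^2(\mu_Y)}\lll1$, and, setting $\psi:=\hat\tau+\chi-\chi\circ F$, the identity $L\chi=\chi-L\hat\tau$ yields $L\psi=0$ together with $\hat\tau=\psi+\chi\circ F-\chi$.

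The crux is the summability $\sum_{n\geq1}\|L^n\hat\tau\|_{L^2(\mu_Y)}\lll1$. Here $\hat\tau$ is merely square-integrable, so it cannot be fed directly into a spectral-gap argument; what rescues the estimate is that $\hat\tau$ is \emph{constant on partition elements}, so a single application of $L$ already produces a regular function. Writing $L\hat\tau=\sum_{a\in\alpha}\hat\tau_a\,\zeta_a\circ(F|_a)^{-1}$ with $\zeta_a$ the inverse Jacobian of the branch $F|_a$, bounded distortion gives $\zeta_a\circ(F|_a)^{-1}\lll\mu_Y(a)$ pointwise, hence $\|L\hat\tau\|_\infty\leq\|L|\hat\tau|\|_\infty\lll\|\hat\tau\|_{L^1(\mu_Y)}$, and the H\"older control of $\log\zeta_a$ gives $|L\hat\tau(x)-L\hat\tau(y)|\lll d(x,y)^\eta\,L|\hat\tau|(x)\lll d(x,y)^\eta\,\|\hat\tau\|_{L^1(\mu_Y)}$. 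Thus $L\hat\tau$ is bounded and $\eta$-H\"older with norm $\lll\|\hat\tau\|_{L^1(\mu_Y)}\lll\|\hat\tau\|_{L^2(\mu_Y)}$. Since every branch of $F$ is onto $Y$, $F$ is a mixing Gibbs--Markov map and it is classical that $L$ has a spectral gap on $\eta$-H\"older functions; as $\int_Y L\hat\tau\,d\mu_Y=0$, this gives $\|L^n\hat\tau\|_{L^2(\mu_Y)}\leq\|L^{n-1}(L\hat\tau)\|_\infty\lll\rho^{\,n-1}\|\hat\tau\|_{L^2(\mu_Y)}$ for some $\rho\in(0,1)$, and the geometric series converges.

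With the decomposition in hand, telescoping gives $S_k=\sum_{j=0}^{k-1}\psi\circ F^j+\chi\circ F^k-\chi$. Because $L\psi=0$, the functions $\psi\circ F^j$ are pairwise orthogonal in $L^2(\mu_Y)$: for $i<j$, $F$-invariance and the transfer-operator identity give $\int(\psi\circ F^i)(\psi\circ F^j)\,d\mu_Y=\int(L^{\,j-i}\psi)\,\psi\,d\mu_Y=0$. Hence $\|\sum_{j<k}\psi\circ F^j\|_{L^2(\mu_Y)}^2=k\,\|\psi\|_{L^2(\mu_Y)}^2$, while $\|\chi\circ F^k-\chi\|_{L^2(\mu_Y)}\leq2\|\chi\|_{L^2(\mu_Y)}$, so
\[
  \|\tau_k-k\bar\tau\|_{L^2(\mu_Y)}=\|S_k\|_{L^2(\mu_Y)}\leq\sqrt k\,\|\psi\|_{L^2(\mu_Y)}+2\|\chi\|_{L^2(\mu_Y)}\lll\sqrt k,
\]
with implied constant depending only on $\lambda,K,\eta,\diam M$ and $\int_Y\tau^2\,dm$; equivalently, $\|k^{-1}\tau_k-\bar\tau\|_{L^2(\mu_Y)}\lll k^{-1/2}$. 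The only genuinely non-routine step is the transfer-operator estimate of the second paragraph: converting the bare $L^2$-integrability of $\tau$ into usable H\"older regularity by combining the piecewise-constancy of $\tau$ with one smoothing step of $L$ before invoking the Gibbs--Markov spectral gap. Everything else is the classical Gordin scheme together with the orthogonality of reverse martingale differences.
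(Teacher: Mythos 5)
Your argument is correct, and it hinges on exactly the same key step as the paper's proof: although $\hat\tau=\tau-\bar\tau$ is merely square-integrable, a single application of the transfer operator $L$ (the paper's $P$) produces a bounded $\eta$-H\"older function because $\hat\tau$ is constant on partition elements, after which the Gibbs--Markov spectral gap yields $\|L^n\hat\tau\|_\eta\lll\gamma^n$. The two proofs diverge only in the final step. You construct the full Gordin martingale--coboundary decomposition $\hat\tau=\psi+\chi\circ F-\chi$ with $\chi=\sum_{n\ge1}L^n\hat\tau$ and use orthogonality of the increments $\psi\circ F^j$ to get $\|S_k\|_{L^2}\le\sqrt k\,\|\psi\|_{L^2}+2\|\chi\|_{L^2}$. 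The paper skips the decomposition and directly bounds
\[
  \int_Y S_k^2\,d\mu_Y
  \le k\int_Y\hat\tau^2\,d\mu_Y
  +2k\sum_{j\ge1}\Bigl|\int_Y\hat\tau\,(\hat\tau\circ F^j)\,d\mu_Y\Bigr|
  \lll k,
\]
the correlation decay coming from the same transfer-operator estimate. So your route is a mild elaboration rather than a genuinely different argument; building $\psi$ and $\chi$ would pay off if one wanted a CLT or almost sure invariance principle, but for a bare $L^2$ bound the covariance summation is shorter. Two remarks. First, both proofs actually give $\|\tau_k-k\bar\tau\|_{L^2(\mu_Y)}\lll k^{1/2}$, not $k^{-1/2}$; the exponent in the printed statement of the lemma is a sign typo, and your reformulation $\|k^{-1}\tau_k-\bar\tau\|_{L^2(\mu_Y)}\lll k^{-1/2}$ is indeed what the downstream application requires. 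Second, your appeal to a ``classical'' Gibbs--Markov spectral gap is fine in spirit, but under the paper's precise hypotheses the pertinent reference is \cite[Cor.~2.4]{KKM16exp}, which is what the paper cites.
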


\begin{proof}
  Let $P \colon L^1(\mu_Y) \to L^1(\mu_Y)$ denote the transfer operator
  corresponding to $F$ and $\mu_Y$, so 
  $\int_Y v \circ F \, w \, d\mu_Y = \int_Y v \, P w \, d\mu_Y$
  for all $v \in L^\infty$ and $w \in L^1$.
  
  Let $\pot = \tau - \bar{\tau}$. It is a direct verification that $\|P \pot \|_\eta \leq C$.
  Thus, by \cite[Cor.~2.4]{KKM16exp}, $\|P^k \pot\|_\eta \leq C \gamma^k$ for all $k \geq 1$,
  where $\gamma \in (0,1)$ depends only on $\lambda$, $K$, $\eta$ and $\diam M$.
  
  Finally,
  \[
    \int_Y \Bigl( \sum_{j=0}^{k-1} \pot \circ F \Bigr)^2 \, d\mu_Y
    \leq k \int_Y \pot^2 \, d\mu_Y 
    + 2k \sum_{j=1}^{\infty} \Bigl| \int_Y \pot \circ F^k \, \pot \, d\mu_Y \Bigr|
    \leq C k
    .
  \]
  The result follows.
\end{proof}

\begin{lem}[{\cite[Cor.~2.10]{KKM17mart}}] \label{lemMommu}
  For all $\pot \colon M \to \R$ and $n \geq 0$,
  \[
    \biggl|
    \sup_{k \leq n} 
    \Bigl|
      \sum_{j=0}^{k-1} \pot \circ f^j 
      - k \int \pot \, d\mu
    \Bigr|
    \biggr|_{L^2(\mu)}
    \leq C \|\pot\|_\eta n^{1/2}
    .
  \]
\end{lem}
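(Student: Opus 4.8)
The plan is to reduce the bound to a martingale maximal inequality on the Gibbs--Markov induced map $F\colon Y\to Y$ and then transfer it to $(M,f,\mu)$ by a first-entry decomposition, using the moment hypothesis $\int_Y\tau^2\,dm<\infty$. Write $S_k\pot=\sum_{j=0}^{k-1}\pot\circ f^j$ and define the induced observable $\hat\pot\colon Y\to\R$ by $\hat\pot(y)=\sum_{\ell=0}^{\tau(y)-1}\pot(f^\ell y)$, so that $\sum_{i=0}^{k-1}\hat\pot\circ F^i=S_{\tau_k}\pot$ on $Y$, with $\tau_k=\sum_{i<k}\tau\circ F^i$. Since $\mu=\pi_*\mu_\Delta$ one has $\int\pot\,d\mu=\bar\tau^{-1}\int_Y\hat\pot\,d\mu_Y$, so $g:=\hat\pot-\bigl(\int\pot\,d\mu\bigr)\tau$ has $\int_Y g\,d\mu_Y=0$; and since $|g|\le 2|\pot|_\infty\tau$ and $\tau\in L^2(\mu_Y)$ (as $\mu_Y\mc m$), $g\in L^2(\mu_Y)$ with $|g|_{L^2(\mu_Y)}\lll\|\pot\|_\eta$.

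The key regularity observation is that, although $g$ itself need not be bounded, the contraction bound $d(f^\ell x,f^\ell y)\le K\,d(Fx,Fy)$ on each branch of $F$ forces the image $Pg$ under the transfer operator $P$ of $(F,\mu_Y)$ to be H\"older with $\|Pg\|_\eta\lll\|\pot\|_\eta$. Since $P$ has a spectral gap on the H\"older space, $\psi:=\sum_{n\ge1}P^ng=\sum_{n\ge0}P^n(Pg)$ converges geometrically in the H\"older norm with $\|\psi\|_\eta\lll\|\pot\|_\eta$, and $\chi:=g-\psi\circ F+\psi$ satisfies $P\chi=0$, $\chi\in L^2(\mu_Y)$, $|\chi|_{L^2(\mu_Y)}\lll\|\pot\|_\eta$. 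This is the martingale--coboundary decomposition. Because $P\chi=0$, the sequence $(\chi\circ F^{k-1-i})_i$ is orthogonal, so by Doob's $L^2$ maximal inequality $\bigl|\max_{k\le N}\bigl|\sum_{i<k}\chi\circ F^i\bigr|\bigr|_{L^2(\mu_Y)}\lll\sqrt N\,|\chi|_{L^2(\mu_Y)}$; absorbing the telescoping coboundary (bounded by $2|\psi|_\infty$) gives $\bigl|\max_{k\le N}\bigl|\sum_{i<k}g\circ F^i\bigr|\bigr|_{L^2(\mu_Y)}\lll\sqrt N\,\|\pot\|_\eta$ for $N\ge 1$.

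Transferring back, for $x\in M$ let $e(x)=\inf\{j\ge0\colon f^jx\in Y\}$. For $k\ge e(x)$ one has $S_k\pot(x)=S_{e(x)}\pot(x)+S_{k-e(x)}\pot(f^{e(x)}x)$ with $f^{e(x)}x\in Y$, where a Birkhoff sum decomposes into complete $g$-blocks (handled by the previous paragraph) plus a single incomplete final block, whose length is at most the value of $\tau$ at one of the at most $n$ returns encountered. Hence $\max_{k\le n}\bigl|S_k\pot-k\!\int\!\pot\,d\mu\bigr|$ is dominated by $2|\pot|_\infty\min(e,n)$, plus $\max_{k\le n}\bigl|\sum_{i<k}g\circ F^i\bigr|$, plus $2|\pot|_\infty\max_{i\le n}\tau\circ F^i$, the latter two evaluated after the first entry to $Y$ (whose pushforward of $\mu$ is absolutely continuous with respect to $\mu_Y$ with bounded density, in the sense of \cite{K18eq}). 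The first term is controlled by the capping inequality $\min(e,n)^2\le n\,e$ together with $\int e\,d\mu\lll\int_Y\tau^2\,dm<\infty$, giving $\|\min(e,n)\|_{L^2(\mu)}\lll\sqrt n$; the $\tau$-term by $\|\max_{i\le n}\tau\circ F^i\|_{L^2(\mu_Y)}^2\le n\int_Y\tau^2\,d\mu_Y\lll n$. Adding up yields $\bigl|\max_{k\le n}|S_k\pot-k\int\pot\,d\mu|\bigr|_{L^2(\mu)}\lll\sqrt n\,\|\pot\|_\eta$, which is the claim.

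The main obstacle is twofold: first, the martingale--coboundary decomposition for the \emph{unbounded} induced observable $g$, the crux being that one works not with $g$ but with $Pg$, which does lie in the H\"older space on which $P$ has a spectral gap; second, making the incomplete-block errors uniform in $k\le n$ with only a second moment of $\tau$, which is why one must use the capping estimates $\min(\cdot,n)^2\le n(\cdot)$ and $\int_Y\tau^2\,dm<\infty$ rather than cruder bounds. Both ingredients are precisely those supplied by \cite{KKM16exp,KKM17mart}.
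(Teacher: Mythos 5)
The paper does not prove this lemma: it simply cites \cite[Cor.~2.10]{KKM17mart} and records the statement. Your blind proof faithfully reconstructs the argument of that reference (induce the observable $\hat\pot$, apply the transfer operator to obtain H\"older regularity of $Pg$ rather than of $g$ itself, run the martingale--coboundary decomposition with Doob's maximal inequality on the Gibbs--Markov base, and transfer back via the first-entry decomposition with the capping estimate $\min(e,n)^{2}\le n\,e$ and the second-moment hypothesis on $\tau$), so it is correct and essentially the same approach as the source the paper is relying on.
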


Observe that $\frac{dm}{d\mu} \leq C$. Thus
\begin{cor}
  For all $\pot \colon M \to \R$ and $n \geq 0$,
  \[
    \biggl|
    \sup_{k \leq n} 
    \Bigl|
      \sum_{j=0}^{k-1} \pot \circ f^j 
      - k \int \pot \, d\mu
    \Bigr|
    \biggr|_{L^2(m)}
    \leq C \|\pot\|_\eta n^{1/2}
    .
  \]
\end{cor}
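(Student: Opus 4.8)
The plan is to deduce the corollary from \rlem{Mommu} by a simple change of measure; no new dynamical input is needed. The only point requiring comment is the claim $dm/d\mu \le C$ made just before the statement, so I would begin there. Recall $\mu = \pi_*\mu_\Delta$ where $\mu_\Delta = (\mu_Y \times \text{counting})/\int_Y \tau \, d\mu_Y$ and $\pi$ restricts to the identity on the base level $Y \times \{0\}$ of the tower. Restricting $\mu_\Delta$ to $Y \times \{0\}$ and pushing forward therefore gives $\mu|_Y \geq \mu_Y / \int_Y \tau \, d\mu_Y$ as measures on $Y$. By the density bound $d\mu_Y/dm \geq C^{-1}$ (the lemma quoted from \cite{KKM16exp}) and by $\int_Y \tau \, d\mu_Y \lll \bigl(\int_Y \tau^2 \, dm\bigr)^{1/2} \lll 1$ (Cauchy--Schwarz together with $d\mu_Y/dm \le C$), we get $d\mu/dm \geq C^{-1}$ on $Y$. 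Since $m$ is supported on $Y$, this yields $dm/d\mu \leq C$ as a bounded Radon--Nikodym derivative, with $C$ of the permitted form.

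Next I would record the elementary consequence: for any measurable $g \colon M \to \R$,
\[
  \int_M g^2 \, dm
  = \int_M g^2 \, \frac{dm}{d\mu} \, d\mu
  \leq C \int_M g^2 \, d\mu
  ,
\]
so that $|g|_{L^2(m)} \leq C^{1/2} |g|_{L^2(\mu)}$.

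Finally, apply this inequality with
$g = \sup_{k \leq n} \bigl| \sum_{j=0}^{k-1} \pot \circ f^j - k \int \pot \, d\mu \bigr|$
and invoke \rlem{Mommu} to bound $|g|_{L^2(\mu)} \leq C \|\pot\|_\eta n^{1/2}$; absorbing the constant $C^{1/2}$ completes the proof. There is no real obstacle here: the substance lies entirely in \rlem{Mommu}, and the density comparison $dm/d\mu \leq C$ is routine tower bookkeeping, which is presumably why the authors dispatch it in a single line.
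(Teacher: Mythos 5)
Your proposal is correct and follows exactly the route the paper intends: the paper dispatches the corollary with the single observation $\frac{dm}{d\mu}\leq C$ followed by an appeal to \rlem{Mommu}, and your argument merely fills in that observation (via $\mu\geq \pi_*(\mu_\Delta|_{Y\times\{0\}})=\mu_Y/\int_Y\tau\,d\mu_Y\geq C^{-1}m$, using the density bound for $\mu_Y$ and the integrability of $\tau$) before performing the same change of measure. No discrepancy with the paper's approach.
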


We define a metric $d_{\Delta}$ on $\Delta$ by
\[
  d_{\Delta}((y, \ell),(y', \ell')) =
  \begin{cases}
    d(y,y'), & \ell=\ell'; \\
    \diam M, & \text{otherwise}.
  \end{cases}
\]
Define $|\cdot|_\eta$, $|\cdot|_\infty$ and $\|\cdot\|_\eta$ for functions on
$\Delta$ similarly to~\eqref{eq:hoho}.

\begin{rem}
  \label{remYiaf}
  $T \colon \Delta \to \Delta$ is itself a nonuniformly expanding map.
  Thus for all $\psi \colon \Delta \to \R$,
  \[
    \biggl|
    \sup_{k \leq n} 
    \Bigl|
      \sum_{j=0}^{k-1} \psi \circ T^j 
      - k \int \psi \, d\mu_\Delta
    \Bigr|
    \biggr|_{L^2(\mu_\Delta)}
    \leq C \|\psi\|_\eta n^{1/2}
    .
  \]
\end{rem}


\begin{thebibliography}{10}

\bibitem{AlvVil13}
J.~F. Alves and H.~Vilarinho.
\newblock Strong stochastic stability for non-uniformly expanding maps.
\newblock {\em Ergodic Theory Dynam. Systems}, 33(3):647--692, 2013.

\bibitem{AlCaFr}
J.~F. Alves, M.~Carvalho, and J.~M. Freitas.
\newblock Statistical stability for {H}\'enon maps of the
  {B}enedicks-{C}arleson type.
\newblock {\em Ann. Inst. H. Poincar\'e Anal. Non Lin\'eaire}, 27(2):595--637,
  2010.

\bibitem{AlvesVianaSS}
J.~F. Alves and M.~Viana.
\newblock Statistical stability for robust classes of maps with non-uniform
  expansion.
\newblock {\em Ergodic Theory Dynam. Systems}, 22(1):1--32, 2002.

\bibitem{AndPon}
A.~A. Andronov and L.~Pontrjagin.
\newblock Systems grossiers.
\newblock In {\em Dokl. Akad. Nauk. USSR}, volume~14, pages 247--251, 1937.

\bibitem{BalBenDes02}
V.~Baladi, M.~Benedicks, and V.~Maume-Deschamps.
\newblock Almost sure rates of mixing for i.i.d.\ unimodal maps.
\newblock {\em Ann. Sci. \'Ecole Norm. Sup. (4)}, 35(1):77--126, 2002.

\bibitem{BalBenSch}
V.~Baladi, M.~Benedicks, and D.~Schnellmann.
\newblock Whitney-{H}\"older continuity of the {SRB} measure for transversal
  families of smooth unimodal maps.
\newblock {\em Invent. Math.}, 201(3):773--844, 2015.

\bibitem{BalVia96}
V.~Baladi and M.~Viana.
\newblock Strong stochastic stability and rate of mixing for unimodal maps.
\newblock {\em Ann. Sci. \'Ecole Norm. Sup. (4)}, 29(4):483--517, 1996.

\bibitem{BenCar}
M.~Benedicks and L.~Carleson.
\newblock The dynamics of the {H}\'enon map.
\newblock {\em Ann. of Math. (2)}, 133(1):73--169, 1991.

\bibitem{BenYou92}
M.~Benedicks and L.-S. Young.
\newblock Absolutely continuous invariant measures and random perturbations for
  certain one-dimensional maps.
\newblock {\em Ergodic Theory Dynam. Systems}, 12(1):13--37, 1992.

\bibitem{deMvS}
W.~de~Melo and S.~van Strien.
\newblock {\em One-dimensional dynamics}, volume~25 of {\em Ergebnisse der
  Mathematik und ihrer Grenzgebiete (3) [Results in Mathematics and Related
  Areas (3)]}.
\newblock Springer-Verlag, Berlin, 1993.

\bibitem{DobTod}
N.~Dobbs and M.~Todd.
\newblock Free energy and equilibrium states for families of interval maps.
\newblock {\em arXiv:1512.09245,} 2015;
\newblock {\em Mem. Amer. Math. Soc.,} to appear.

\bibitem{DolOdomcd}
D.~Dolgopyat.
\newblock On dynamics of mostly contracting diffeomorphisms.
\newblock {\em Comm. Math. Phys.}, 213(1):181--201, 2000.

\bibitem{FToddSS}
J.~M. Freitas and M.~Todd.
\newblock The statistical stability of equilibrium states for interval maps.
\newblock {\em Nonlinearity}, 22(2):259--281, 2009.

\bibitem{GSS:metric}
J.~Graczyk, D.~Sands, and G.~{\'S}wi\c{a}tek.
\newblock Metric attractors for smooth unimodal maps.
\newblock {\em Ann. of Math. (2)}, 159(2):725--740, 2004.

\bibitem{GWW16}
G.~A. Gottwald, J.~P. Wormell, and J.~Wouters.
\newblock On spurious detection of linear response and misuse of the
  fluctuation-dissipation theorem in finite time series.
\newblock {\em Phys. D}, 331:89--101, 2016.

\bibitem{HofKel1990a}
F.~Hofbauer and G.~Keller.
\newblock Quadratic maps without asymptotic measure.
\newblock {\em Comm. Math. Phys.}, 127(2):319--337, 1990.

\bibitem{Jak}
M.~V. Jakobson.
\newblock Absolutely continuous invariant measures for one-parameter families
  of one-dimensional maps.
\newblock {\em Comm. Math. Phys.}, 81(1):39--88, 1981.

\bibitem{KelSSiscds}
G.~Keller.
\newblock Stochastic stability in some chaotic dynamical systems.
\newblock {\em Monatsh. Math.}, 94(4):313--333, 1982.

\bibitem{K18eq}
A.~Korepanov.
\newblock Equidistribution for nonuniformly expanding dynamical
  systems, and application to the almost sure invariance principle.
\newblock {\em Comm. Math. Phys.}, 359(3):1123--1138, 2018.

\bibitem{KKM16exp}
A.~Korepanov, Z.~Kosloff, and I.~Melbourne.
\newblock Explicit coupling argument for nonuniformly hyperbolic
  transformations.
  \newblock {\em Proc. Edinb. Math. Soc.}, 149(1):101--130, 2019.

\bibitem{KKM15}
A.~Korepanov, Z.~Kosloff, and I.~Melbourne.
\newblock Averaging and rates of averaging for uniform families of
  deterministic fast-slow skew product systems.
\newblock {\em Studia Math.}, 238(1):59--89, 2017.

\bibitem{KKM17mart}
A.~Korepanov, Z.~Kosloff, and I.~Melbourne.
\newblock Martingale--coboundary decomposition for families of dynamical
  systems.
\newblock {\em Ann. Inst. H. Poincar{\'e} Anal. Non Lin{\'e}aire},
  35(4):859--885, 2018.

\bibitem{Lev}
G.~Levin.
\newblock On an analytic approach to the {F}atou conjecture.
\newblock {\em Fund. Math.}, 171(2):177--196, 2002.

\bibitem{LiWang13}
S.~Li and Q.~Wang.
\newblock The slow recurrence and stochastic stability of unimodal interval
  maps with wild attractors.
\newblock {\em Nonlinearity}, 26(6):1623--1637, 2013.

\bibitem{Lyu2002Ann}
M.~Lyubich.
\newblock Almost every real quadratic map is either regular or stochastic.
\newblock {\em Ann. of Math. (2)}, 156(1):1--78, 2002.

\bibitem{Mane:Hyp}
R.~Ma\~n\'e.
\newblock Hyperbolicity, sinks and measure in one dimensional dynamics.
\newblock {\em Commun. Math. Phys.}, 100:495--524, 1985.

\bibitem{Martens:Nice}
M.~Martens.
\newblock Distortion results and invariant {C}antor sets of unimodal maps.
\newblock {\em Ergodic Theory Dynam. Systems}, 14(2):331--349, 1994.

\bibitem{Mis:IHES}
M.~Misiurewicz.
\newblock Absolutely continuous measures for certain maps of an interval.
\newblock {\em Inst. Hautes \'Etudes Sci. Publ. Math.}, (53):17--51, 1981.

\bibitem{NowSan}
T.~Nowicki and D.~Sands.
\newblock Non-uniform hyperbolicity and universal bounds for {$S$}-unimodal
  maps.
\newblock {\em Invent. Math.}, 132(3):633--680, 1998.

\bibitem{Peix}
M.~M. Peixoto.
\newblock Structural stability on two-dimensional manifolds.
\newblock {\em Topology}, 1:101--120, 1962.

\bibitem{PrzRL}
F.~Przytycki and J.~Rivera-Letelier.
\newblock Geometric pressure for multimodal maps of the interval.
\newblock {\em Memoirs of the AMS}, 259, no.\ 1246, 2019.

\bibitem{RueDiff}
D.~Ruelle.
\newblock Differentiating the absolutely continuous invariant measure of an
  interval map {$f$} with respect to {$f$}.
\newblock {\em Comm. Math. Phys.}, 258(2):445--453, 2005.

\bibitem{RueStruct}
D.~Ruelle.
\newblock Structure and {$f$}-dependence of the {A}.{C}.{I}.{M}. for a unimodal
  map {$f$} is {M}isiurewicz type.
\newblock {\em Comm. Math. Phys.}, 287(3):1039--1070, 2009.

\bibitem{SanMMR}
D.~Sands.
\newblock Misiurewicz maps are rare.
\newblock {\em Comm. Math. Phys.}, 197(1):109--129, 1998.

\bibitem{Shen13}
W.~Shen.
\newblock On stochastic stability of non-uniformly expanding interval maps.
\newblock {\em Proc. Lond. Math. Soc. (3)}, 107(5):1091--1134, 2013.

\bibitem{ShenStrien13}
W.~Shen and S.~van Strien.
\newblock On stochastic stability of expanding circle maps with neutral fixed
  points.
\newblock {\em Dyn. Syst.}, 28(3):423--452, 2013.

\bibitem{SmaleWiGA}
S.~Smale.
\newblock What is global analysis?
\newblock {\em Amer. Math. Monthly}, 76:4--9, 1969.

\bibitem{Thun}
H.~Thunberg.
\newblock Unfolding of chaotic unimodal maps and the parameter dependence of
  natural measures.
\newblock {\em Nonlinearity}, 14(2):323--337, 2001.

\bibitem{Tsu}
M.~Tsujii.
\newblock Positive {L}yapunov exponents in families of one-dimensional
  dynamical systems.
\newblock {\em Invent. Math.}, 111(1):113--137, 1993.

\bibitem{Tsu95}
M.~Tsujii.
\newblock On continuity of {B}owen-{R}uelle-{S}inai measures in families of
  one-dimensional maps.
\newblock {\em Comm. Math. Phys.}, 177(1):1--11, 1996.

\end{thebibliography}
\end{document}